\documentclass[11pt]{article}

\usepackage{amsmath,amssymb,amsfonts,textcomp,amsthm,xifthen,psfrag,graphicx,color,MnSymbol}
\usepackage[T1]{fontenc}
\usepackage{hyperref}
\usepackage{placeins}
\usepackage{xargs}
\usepackage{pgfplots}
\usepackage{pgfplotstable}
\pgfplotsset{compat=newest}
\usepgfplotslibrary{groupplots}
\usepgfplotslibrary{external}
\usepackage{cancel}

\definecolor{navyblue}{rgb}{0.0, 0.0, 0.5}

\date{}

\newtheorem{theorem}{Theorem}
\newtheorem{lemma}[theorem]{Lemma}
\newtheorem{cor}[theorem]{Corollary}
\newtheorem{prop}[theorem]{Proposition}
\newtheorem{remark}[theorem]{Remark}
\theoremstyle{definition}

\newcommand{\discard}[1]{}

\newcommand{\tnorm}[1]{|\!|\!| #1 |\!|\!|}

\newcommand{\wilde}{\widetilde}
\newcommand{\R}{\ensuremath{\mathbb{R}}}

\newcommand{\dual}[2]{\langle#1\hspace*{.5mm},#2\rangle}
\newcommand{\vdual}[2]{\langle#1\hspace*{.5mm},#2\rangle}
\newcommand{\ds}{\,\mathrm{d}s}

\newcommand{\HG}{\boldsymbol{H}\!G}
\newcommand{\Geps}[1]{{\Gamma_\epsilon(#1)}}

\DeclareMathOperator{\id}{\boldsymbol{I}}

\newcommand{\Vpot}{\widetilde V}
\newcommand{\Kpot}{\widetilde K_\nu}
\newcommand{\V}{\boldsymbol{V}}
\newcommand{\K}{\boldsymbol{K}_\nu}
\newcommand{\Vo}{V_0}
\newcommand{\Vt}{V_n}
\newcommand{\Ko}{K_{\nu,0}}
\newcommand{\Kt}{K_{\nu,n}}
\newcommand{\Kteps}{K_{\nu,n,\epsilon}}
\newcommand{\Keps}{K_{\nu,\epsilon}}
\newcommand{\npi}[3][]{\pi_{#3,#2}^{#1}}

\newcommand{\grad}{\nabla}
\newcommand{\Grad}{\boldsymbol{\nabla}}
\newcommand{\Gradgrad}{\Grad\grad}

\let\div\undefined
\DeclareMathOperator{\div}{div}
\DeclareMathOperator{\Div}{Div}
\DeclareMathOperator{\divDiv}{divDiv}
\newcommand{\dDiv}{\mathrm{dDiv}}

\newcommand{\trGg}[1]{\mathrm{tr}^{\mathrm{Ggrad}}(#1)}
\newcommand{\trGge}[1]{\mathrm{tr}^{\mathrm{Ggrad}}_e(#1)}
\newcommand{\trdD}[1]{\mathrm{tr}^{\mathrm{dDiv}}(#1)}
\newcommand{\trdDe}[1]{\mathrm{tr}^{\mathrm{dDiv}}_e(#1)}
\newcommand{\trGgwo}{\mathrm{tr}^{\mathrm{Ggrad}}}
\newcommand{\trdDwo}{\mathrm{tr}^{\mathrm{dDiv}}}
\newcommand{\nDiv}[1]{\mathrm{nDiv_{eff}}(#1)}

\newcommand{\tr}[1]{\boldsymbol{#1}}
\newcommand{\trD}[1]{{#1}_0}
\newcommand{\trhD}[1]{{#1}_{h,0}}
\newcommand{\trN}[1]{{#1}_n}
\newcommand{\trhN}[1]{{#1}_{h,n}}
\newcommand{\trnn}[1]{{#1}_\mathit{nn}}
\newcommand{\trhnn}[1]{{#1}_\mathit{h,nn}}
\newcommand{\trsf}[1]{{#1}_\mathit{sf}}
\newcommand{\trhsf}[1]{{#1}_\mathit{h,sf}}
\newcommand{\trJ}[1]{{#1}_{J}}
\newcommand{\trhJ}[1]{{#1}_{h,J}}

\newcommand{\jGg}[1]{[#1]_{\mathrm{Ggrad}}}
\newcommand{\jdD}[1]{[#1]_{\mathrm{dDiv}}}
\newcommand{\avGg}[1]{\left\{#1\right\}_{\mathrm{Ggrad}}}

\newcommand{\bM}{\boldsymbol{M}}
\newcommand{\bQ}{\boldsymbol{Q}}

\newcommand{\cN}{\mathcal{N}}
\newcommand{\cE}{\mathcal{E}}
\newcommand{\cC}{\boldsymbol{\mathcal{C}}}

\let\SS\undefined
\newcommand{\SS}{\mathbb{S}}

\title{Boundary elements for clamped Kirchhoff--Love plates
\thanks{Supported by ANID-Chile through FONDECYT projects 1230013, 1250070, and
        the Deutsche Forschungsgemeinschaft (DFG, German Research Foundation)
        under Germany's Excellence Strategy -- EXC-2047/1 -- 390685813.}}
\author{
Thomas F\"uhrer\thanks{
Facultad de Matem\'aticas, Pontificia Universidad Cat\'olica de Chile,
Avenida Vicu\~na Mackenna 4860, Santiago, Chile,
email: {\tt thfuhrer@uc.cl}}
\and
Gregor Gantner\thanks{
Institut f\"ur Numerische Simulation, Universit\"at Bonn,
Friedrich-Hirzebruch-Allee 7,
53115 Bonn, Germany,
email: {\tt gantner@ins.uni-bonn.de}}
\and
Norbert Heuer\thanks{
Facultad de Matem\'aticas, Pontificia Universidad Cat\'olica de Chile,
Avenida Vicu\~na Mackenna 4860, Santiago, Chile,
email: {\tt nheuer@uc.cl}}}

\begin{document}
\maketitle

\begin{abstract}
We present a Galerkin boundary element method for clamped Kirchhoff--Love plates
with piecewise smooth boundary. It is a direct method based on the representation
formula and requires the inversion of the single-layer operator and an application of
the double-layer operator to the Dirichlet data.
We present trace approximation spaces of arbitrary order, required for both the Dirichlet
data and the unknown Neumann trace. Our boundary element method is quasi-optimal with respect to the natural trace norm
 and achieves optimal convergence order under minimal regularity assumptions.
We provide explicit representations of both boundary integral operators 
and discuss the implementation of the appearing integrals.
Numerical experiments for smooth and non-smooth domains confirm predicted convergence rates.

\bigskip
\noindent
{\em AMS Subject Classification}:
65N38, 
74S05, 
74K20, 
35J35 


\medskip
\noindent
{\em Keywords}: plate bending, biharmonic equation, Kirchhoff--Love model,
                boundary element method, {\sl a priori} error estimation.
\end{abstract}

\section{Introduction} \label{sec_intro}

We establish and analyze a boundary element method for the Dirichlet problem
of the biharmonic equation, which represents, in particular,
the Kirchhoff--Love model for the bending of thin plates.
Before going into the details, let us place our findings in their historical context.

For a long time, the Galerkin boundary element method has been an established tool for the
numerical solution of boundary value problems of
second order with constant coefficients and homogeneous right-hand side.
Its mathematical analysis started in the 1970s, see, e.g.,
the works by Hsiao, N\'ed\'elec, Stephan, Wendland, and co-authors
\cite{HsiaoMC_73_SBV,NedelecP_73_MVE,HsiaoW_77_FEM,GiroireN_78_NSE,WendlandSH_79_IEM}.
In the 1980s, Costabel and Stephan made notable steps towards the analysis on
non-smooth boundaries, see \cite{CostabelS_83_NDD,CostabelS_85_BIE,Costabel_88_BIO},
specifically for boundary integral equations of the first kind.
There are now standard references for the analysis of boundary integral operators
on Lipschitz domains,
\cite{Costabel_88_BIO,McLean_00_SES},
and the boundary element method,
\cite{GwinnerS_18_ABE,HsiaoW_21_BIE,SauterS_11_BEM,Steinbach_08_NAM}.

There have been attempts to extend the theoretical tools developed for Laplace-type problems to
the bi-Laplacian, or biharmonic, operator. So far, the results have been limited.
In the following, we mention some of them. 

Christiansen and Hougard \cite{ChristiansenH_78_IPI} provide a boundary integral representation
for the Dirichlet problem on smooth boundaries and prove its solvability,
where a related open conjecture is shown by Fuglede~\cite{fuglede81}.
Giroire and N\'ed\'elec \cite{GiroireN_95_NSB} develop a formulation for the Neumann
problem and, essential for numerical approximations, provide an integration-by-parts
technique to transform dualities of hypersingular operators into weakly singular ones,
again for smooth curves.
Hsiao and Wendland \cite{HsiaoW_21_BIE} study boundary integral operators for the
bi-Laplacian in the case of smooth domains.
Costabel and Dauge \cite{CostabelD_96_IBS} consider the single-layer operator
of the bi-Laplacian on Lipschitz curves and prove its invertibility
for appropriately scaled domains. (Notably, they also present some numerical experiments
for the related eigenvalue problem with lowest-order approximation.)
For piecewise smooth curves, Schmidt and Khoromskij \cite{SchmidtK_99_BIE,Schmidt_01_BIO}
develop the family of boundary integral operators that constitute the so-called Calder\'on projector 
and study their mapping properties. For a critical result on trace relations,
they refer to Jakovlev \cite{Jakovlev_61_BPF}.
In \cite{CakoniHW_05_BIE}, Cakoni {\it et al.} analyze systems of boundary integral
equations for the bi-Laplacian with mixed boundary conditions.
They assume that the boundary is locally Lipschitz differentiable.
Mitrea and Mitrea \cite{MitreaM_13_BVP} analyze layer potentials of the bi-Laplacian
in any space dimension, within the framework of a class of non-Hilbertian spaces.

Achievable approximation orders depend on the regularity of solutions and possible
singularities. For elliptic problems of second order, this is a well-studied subject.
Literature is scarce in the case of the bi-Laplacian.
Blum and Rannacher \cite{BlumR_80_BVP} analyze these corner singularities
in the case of combinations of different boundary conditions;
for finite element analyses involving corner singularities, see, e.g.,
\cite{Stephan_79_CMF,DeCosterNS_15_SBD}.

We are only aware of three contributions to the mathematical analysis of boundary elements
for the bi-Laplacian. Costabel {\it et al.} \cite{CostabelSW_83_BIE} use Mellin techniques
to analyze a boundary integral equation and its approximation on polygons. 
There is further a preliminary analysis of Galerkin approximations for polygons
by Bourlard \cite{Bourlard_88_PDB}, which however does not take corner forces at vertices (see \eqref{trdD_comp}) into account 
and hinges on appropriately graded meshes.
Moreover, Costabel and Saranen \cite{CostabelS_89_BEA} analyze boundary element approximations
for the Dirichlet problem, again for smooth domains.
We remark that there is a considerable number of publications from the engineering community
on boundary elements for plate problems. We only mention some early contributions,
e.g., \cite{Bezine_78_BIF,GuoShuM_86_BEM,HartmannZ_86_DBE,Beskos_91_BEA}.

In this paper, we establish a boundary element method for the Dirichlet problem
of the biharmonic operator on bounded domains $\Omega$ with piecewise smooth boundary $\Gamma$.

Our method is based on the representation formula with fundamental solution
and the application of a Dirichlet trace operator.
This procedure gives rise to a boundary integral equation
of the first kind and involves the single- and double-layer operators ($\V$ resp.~$\K$)
that relate the Cauchy data (Dirichlet and Neumann traces) of the unknown solution
(relation \eqref{BIE} in Section~\ref{sec_BIO}).

Differently from previous research, a critical ingredient is a proper analysis of the
involved trace operators and spaces, not only permitting duality relations between them,
but also being sufficiently explicit to allow for discretizations with local
basis ``functions'', aka boundary elements. These trace operators and spaces have been
developed in \cite{FuehrerHN_19_UFK} in the context of the discontinuous Petrov--Galerkin
method with optimal test functions. Dirichlet traces are generated by a trace
operator acting on $H^2(\Omega)$, denoted as $\trGgwo$ with image $H^{3/2,1/2}(\Gamma)$.
It represents the canonical trace of functions and their normal derivative.
The Neumann traces stem from $\trdDwo$ with image $H^{-3/2,-1/2}(\Gamma)$,
an operator acting on tensors of bending moments
that live in $H(\dDiv,\Omega;\SS)$
(symmetric $L_2$ tensors with twice-iterated divergence in $L_2$).
It represents three components that are well-known in the mechanics community:
the normal bending moments, the effective shear forces, and corner forces at vertices,
cf., e.g., \cite{VentselK_01_TPS}.
As mentioned before, the traces from $\trGgwo$ and $\trdDwo$ are in duality. In fact,
they are expressions of the second Green identity, which is at the heart of
\emph{direct} representations by boundary integral equations.
We derive the boundary integral equation as in \cite{SchmidtK_99_BIE},
the only difference being the novel representation of trace operators and spaces from
\cite{FuehrerHN_19_UFK}.

Our variational formulation (\eqref{BIE_saddle} in Section~\ref{sec_BEM})
requires a Lagrangian multiplier (a linear polynomial)
to account for the possible non-invertibility of the single-layer operator $\V$. 
The well-posedness (Proposition~\ref{prop_wellposed} in Section~\ref{sec_BEM_VF})
follows from the coercivity of $\V$ on the corresponding quotient
space, proved in \cite{CostabelD_96_IBS}.

We then proceed to develop explicit representations of both boundary integral operators
(Theorems~\ref{thm_BIO_V},~\ref{thm_BIO_K} in Section~\ref{sec_BIO}).
By nature of the trace operators, there is a specific contribution from
each component of the traces. This is a critical result for numerical implementations.
Since we consider a Dirichlet problem, the solution of our boundary integral equation
is a Neumann trace living in $H^{-3/2,-1/2}(\Gamma)$. There is a \emph{low}-order discretization
for this space in \cite{FuehrerHN_19_UFK}, with an error estimate for bending moments
$\bM\in H^2(\Omega)^{2\times 2}$, improved to a reduced regularity of
$\bM\in H^1(\Omega)^{2\times 2}$ with $\div\Div\bM\in L_2(\Omega)^2$ by
\cite[Proposition 11]{FuehrerHH_23_DMQ}.
Here, we provide approximation spaces of any degree for $H^{-3/2,1/2}(\Gamma)$
(Section~\ref{sec_BEM_hp}).
The \emph{lowest}-order case has fewer degrees of freedom than the element in \cite{FuehrerHN_19_UFK}.
Our general error estimate (Theorem~\ref{thm_BestApp} in Section~\ref{sec_BEM_est_neu})
is based on a regularity with local characterization on the boundary and is of optimal order.

A numerical implementation of our boundary element method
(\eqref{BIE_saddle:disc} in Section~\ref{sec_BEM_conv}) requires to approximate
the Dirichlet data, i.e., we also need discretization spaces for $H^{3/2,1/2}(\Gamma)$
with corresponding estimates of approximation errors. Again, we define discrete
Dirichlet trace spaces of any order (Section~\ref{sec_BEM_hp}) and prove
optimal approximation orders (Theorem~\ref{thm_NodInt} in Section~\ref{sec_BEM_est_dir}).
Our boundary element method is quasi-optimal and achieves optimal order
(Theorem~\ref{thm_BEM} and Corollary~\ref{cor_BEM} in Section~\ref{sec_BEM_conv}).

\subsection{Outline}

We have just discussed the main contributions of this paper. Its remaining structure is
as follows. In the next section, we briefly introduce the Kirchhoff--Love plate bending model
and its representation by the biharmonic equation. All the details on spaces and
traces are reviewed in Section~\ref{sec_spaces}. Section~\ref{sec_BIO} is devoted
to the single- and double-layer integral operators and their explicit representations.
Section~\ref{sec_BEM} develops the variational formulation
of the model problem (in~Section~\ref{sec_BEM_VF}), establishes the trace approximation spaces
(in Section~\ref{sec_BEM_hp}), provides the approximation error estimates in $H^{3/2,1/2}(\Gamma)$ (in Section~\ref{sec_BEM_est_dir}) and in $H^{-3/2,-1/2}(\Gamma)$ (in Section~\ref{sec_BEM_est_neu}),
defines the boundary element method and proves its convergence (in Section~\ref{sec_BEM_conv}).
In Section~\ref{sec_num}, we present several numerical experiments, for
different smooth and non-smooth domains and the three lowest-order approximations.
They confirm the predicted convergence orders.
Appendix~\ref{sec_app} provides details on the calculation of all the appearing integrals.

\subsection{Notation}

Throughout this paper, the notation $A\lesssim B$ means that there is a positive constant
$c$, independent of involved functions and geometric dimensions, such that $A\le cB$.
The relation $A\eqsim B$ indicates that $A\lesssim B$ and $B\lesssim A$.

\section{Model problem} \label{sec_model}

The Kirchhoff--Love plate bending model of an unloaded clamped plate with mid-surface $\Omega$
and boundary $\Gamma:=\partial\Omega$ can be written as
\begin{align} \label{model}
   \divDiv \cC\Gradgrad u = 0  \quad\text{in } \Omega,\qquad
   (u,\partial_n u) = (g,\partial_n g) \quad\text{on } \Gamma
\end{align}
with vertical deflection $u$ and a given sufficiently smooth function $g$ on $\Omega$.
Here, $\partial_n={n}\cdot\grad$ denotes the normal derivative with exterior unit normal
vector ${n}$.
(Once defined the corresponding trace operator,
the Dirichlet data can be specified without extension $g$.)
The tensor $\bM:=-\cC\Gradgrad u$ represents the bending moments and depends linearly
on the Hessian $\Gradgrad u$ of $u$, specifically
\[
   \bM = -\cC\Gradgrad u := -D [\nu\,(\mathrm{tr}\,\Gradgrad u) \id + (1-\nu) \Gradgrad u],
\]
where
\(
   D= \frac{Ed^3}{12(1-\nu^2)}
\)
is the bending rigidity with Young's modulus $E$, Poisson ratio $\nu$,
plate thickness $d$, and $\id\in\R^{2\times 2}$ is the identity tensor.
The operator $\div$ denotes the divergence of vector functions, and $\Div$ is the divergence
operator acting on rows of tensors.

Note that the differential equation in \eqref{model}
can be written as the scaled bi-Laplace equation
\begin{align} \label{bilap}
   \div\Div\cC\Gradgrad u = D\Delta^2 u = 0.
\end{align}

We set the scaling factor to $D=1$, and assume that
$\Omega\subset\R^2$ is a bounded Lipschitz domain with piecewise smooth boundary.

\section{Spaces, traces, and jumps} \label{sec_spaces}

\subsection{Standard spaces}

We use the standard Lebesgue space $L_2(\Omega)$ with generic
$L_2(\Omega)$ duality product $\vdual{\cdot}{\cdot}_\Omega$ and
norm ${\|\cdot\|_\Omega}$ (for scalar, vector-, and tensor-valued functions)
and the Sobolev space $H^2(\Omega)$ with (squared) norm
\[
   \|v\|_{2,\Omega}^2 := \|v\|_\Omega^2 + \|\Gradgrad v\|_\Omega^2.
\]
We also use the tensor space
\[
   H(\dDiv,\Omega;\SS) := \{\bQ\in L_2(\Omega;\SS);\; \divDiv\bQ\in L_2(\Omega)\}
\]
with values in $\SS:=\{\bQ\in \R^{2\times 2};\; \bQ=\bQ^\top\}$ and (squared) norm
\[
   \|\bQ\|_{\dDiv,\Omega}^2 := \|\bQ\|_\Omega^2 + \|\divDiv\bQ\|_\Omega^2.
\]
We use the analogous notation for spaces defined on $\omega\subset\R^2$
with $\Omega$ replaced by $\omega$.

For sufficiently smooth curves $\omega\subset\Gamma$,
we further require Sobolev spaces $H^s(\omega)$ of arbitrary non-negative order $s$ with norm denoted by
$\|\cdot\|_{s,\omega}$, defined as the canonical norm for integer $s$ and, e.g., of
Sobolev--Slobodeckij type otherwise, see, e.g., \cite{McLean_00_SES}.
Specifically, $H^0(\omega) = L_2(\omega)$ with generic $L_2(\omega)$ duality product $\dual{\cdot}{\cdot}_\omega$.
For a partition $\cE(\omega)$ of $\omega$, $H^s(\cE(\omega))$ and $\|\cdot\|_{s,\cE(\omega)}$
indicate the respective product space and norm. We also make use of the standard
spaces $C^0$, $C^1$ on indicated sets, and the notation $\|\cdot\|_{\infty,\omega}$
for the essential supremum on $\omega$.

\subsection{Traces and induced spaces}

As shown in \cite{FuehrerHN_19_UFK}, traces of $H^2(\Omega)$ and $H(\dDiv,\Omega;\SS)$
are in duality. To be specific, we define the trace operators
\begin{align*}
   &\trGgwo:\; \left\{\begin{array}{cll}
      H^2(\Omega) & \to & H(\dDiv,\Omega;\SS)^*\\
      v & \mapsto & \dual{\trGg{v}}{\bQ}
      := \vdual{\divDiv\bQ}{v}_\Omega - \vdual{\bQ}{\Gradgrad v}_\Omega
   \end{array}\right.,\\
   &\trdDwo:\; \left\{\begin{array}{cll}
      H(\dDiv,\Omega;\SS) & \to & H^2(\Omega)^*\\
      \bQ & \mapsto & \dual{\trdD{\bQ}}{v}
      := \vdual{\divDiv\bQ}{v}_\Omega - \vdual{\bQ}{\Gradgrad v}_\Omega
   \end{array}\right..
\end{align*}
These operators induce the trace spaces
\[
   H^{3/2,1/2}(\Gamma) := \trGg{H^2(\Omega)},\quad
   H^{-3/2,-1/2}(\Gamma) := \trdD{H(\dDiv,\Omega;\SS)}
\]
with respective canonical trace norms
  $\|\cdot\|_{3/2,1/2,\Gamma}$ and $\|\cdot\|_{-3/2,-1/2,\Gamma}$
(defined as the infima over all possible extensions).
A duality between them that is consistent with the trace operators is given by
\[
   \dual{\tr{q}}{\tr{v}}
   :=
   \dual{\trGg{v}}{\bQ} = \dual{\trdD{\bQ}}{v}
   \quad \bigl(\tr{v}\in H^{3/2,1/2}(\Gamma),\ \tr{q}\in H^{-3/2,-1/2}(\Gamma)\bigr)
\]
for any $v\in H^2(\Omega)$ and $\bQ\in H(\dDiv,\Omega;\SS)$ with
$\trGg{v}=\tr{v}$ and $\trdD{\bQ}=\tr{q}$.

Let $\cE(\Omega)$ and $\cN(\Omega)$ denote the sets of
(possibly curvilinear) edges and vertices of $\Omega$, respectively.
For sufficiently smooth functions $v$ and $\bQ$, integration by parts shows that
\begin{subequations}
\begin{align}
   &\dual{\trGg{v}}{\bQ} = \dual{\trdD{\bQ}}{v}
   = \dual{{n}\cdot\Div\bQ}{v}_\Gamma - \dual{\bQ{n}}{\nabla v}_\Gamma \label{pre_dual}\\
   &\quad= \sum_{\Gamma'\in\cE(\Omega)} \dual{{n}\cdot\Div\bQ+\partial_t({t}\cdot\bQ{n})}{v}_{\Gamma'}
   + \sum_{z\in\cN(\Omega)} [{t}\cdot\bQ{n}](z) v(z) \label{dual}
   - \dual{{n}\cdot\bQ{n}}{\partial_n v}_\Gamma.
\end{align}
\end{subequations}
In that case, the traces can be identified with their respective components
$\trGg{v}=\bigl(\trD{v}, \trN{v}\bigr)$ and
$\trdD{\bQ}=\bigl(\trnn{q},\trsf{q},\trJ{q}\bigr)$ where, for $\Gamma'\in\cE(\Omega)$
and $z\in\cN(\Omega)$,
\begin{subequations}
\begin{align}
   &\trD{v}:=v|_\Gamma,\ \trN{v}:=\partial_n v|_\Gamma, \label{trGg_comp}\\
   &\trnn{q}|_{\Gamma'}:=({n}\cdot\bQ{n})|_{\Gamma'},\
   \trsf{q}|_{\Gamma'}:=\nDiv{\bQ}|_{\Gamma'},\
   \trJ{q}(z):=[{t}\cdot\bQ{n}](z) \label{trdD_comp}\\
   \text{with}\quad \label{nDiv}
   &\nDiv{\bQ}|_{\Gamma'}:=\bigl({n}\cdot\Div\bQ+\partial_t({t}\cdot\bQ{n})\bigr)|_{\Gamma'}
   \quad\text{(effective shear force)}.
\end{align}
\end{subequations}
Here, $\partial_n$ has been defined before,
and $\partial_t$ denotes the tangential derivative along $\Gamma$ in positive orientation.
The term $[{t}\cdot\bQ{n}](z)$ stands for the jump of ${t}\cdot\bQ{n}$ at $z$ in positive
orientation, that is, $[{t}\cdot\bQ{n}](z)
 := \lim_{0<s\to 0} \bigl({t}\cdot\bQ{n}(\gamma(s)) -  {t}\cdot\bQ{n}(\gamma(-s))\bigr)$
where $\gamma$ is the parametrization of $\Gamma$ with respect to the arc-length $s$
and $\gamma(0)=z$.
See~\cite{FuehrerHN_19_UFK} for details and note that we define jumps with opposite sign here.

Analogously to \cite[Corollary 2.1]{Schmidt_01_BIO}, we have the second Green identity.

\begin{lemma} \label{la_Green2}
Any $v,w\in H^2(\Omega)$ with $\Delta^2v,\Delta^2w\in L_2(\Omega)$ satisfy
\begin{align*}
   &\vdual{v}{\Delta^2 w}_\Omega - \vdual{w}{\Delta^2 v}_\Omega
   =
   \dual{\trdD{\cC\Gradgrad w}}{\trGg{v}} - \dual{\trdD{\cC\Gradgrad v}}{\trGg{w}}.
\end{align*}
\end{lemma}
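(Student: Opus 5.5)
The plan is to apply the definition of $\trdDwo$ twice, with the roles of $v$ and $w$ exchanged, and to cancel the volume terms using the symmetry of $\cC$.

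First I check that the tensors involved lie in the domain of $\trdDwo$. For $w\in H^2(\Omega)$ the tensor $\bQ:=\cC\Gradgrad w$ belongs to $L_2(\Omega;\SS)$: the Hessian is symmetric, and $\cC$ maps symmetric tensors to symmetric tensors. Moreover, by \eqref{bilap} with $D=1$, $\divDiv\cC\Gradgrad w=\Delta^2 w$ in the distributional sense, and this lies in $L_2(\Omega)$ by assumption. Hence $\cC\Gradgrad w\in H(\dDiv,\Omega;\SS)$, and likewise $\cC\Gradgrad v\in H(\dDiv,\Omega;\SS)$. So $\trdD{\cC\Gradgrad w}$ and $\trdD{\cC\Gradgrad v}$ are well-defined elements of $H^{-3/2,-1/2}(\Gamma)$.

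Next I unfold the dualities. By the definition of the duality between $H^{-3/2,-1/2}(\Gamma)$ and $H^{3/2,1/2}(\Gamma)$, which is consistent with the trace operators, taking $v$ itself as the extension of $\trGg{v}$ gives
\begin{align*}
   \dual{\trdD{\cC\Gradgrad w}}{\trGg{v}}
   = \vdual{\Delta^2 w}{v}_\Omega - \vdual{\cC\Gradgrad w}{\Gradgrad v}_\Omega ,
\end{align*}
and symmetrically
\begin{align*}
   \dual{\trdD{\cC\Gradgrad v}}{\trGg{w}}
   = \vdual{\Delta^2 v}{w}_\Omega - \vdual{\cC\Gradgrad v}{\Gradgrad w}_\Omega .
\end{align*}
The duality is independent of the chosen extensions; this is part of the setup from \cite{FuehrerHN_19_UFK} recalled above, so any admissible extension may be used.

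Finally I subtract the two identities. The claim then reduces to
\[
   \vdual{\cC\Gradgrad w}{\Gradgrad v}_\Omega = \vdual{\cC\Gradgrad v}{\Gradgrad w}_\Omega .
\]
This holds pointwise because $\cC$ is self-adjoint with respect to the Frobenius product. Indeed, for symmetric $\bM,\bQ\in\SS$ (with $D=1$),
\[
   \cC\bM:\bQ=\nu\,(\mathrm{tr}\,\bM)(\mathrm{tr}\,\bQ)+(1-\nu)\,\bM:\bQ,
\]
which is symmetric in $\bM$ and $\bQ$.

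The only point needing care is the first step: identifying $\divDiv\cC\Gradgrad w$ with $\Delta^2 w$ in the distributional sense for $w$ that is merely in $H^2(\Omega)$. I would verify this by testing against $C_0^\infty(\Omega)$ functions, moving the derivatives onto the test function, and using the identity $\divDiv(\nu\,\mathrm{tr}(\Gradgrad w)\id+(1-\nu)\Gradgrad w)=\Delta^2 w$, which holds for smooth functions. Everything else is bookkeeping.
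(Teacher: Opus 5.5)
Your proof is correct and takes the same route as the paper. The paper's one-line proof cites \eqref{bilap} with $D=1$ (so $\cC\Gradgrad w\in H(\dDiv,\Omega;\SS)$ with $\divDiv\cC\Gradgrad w=\Delta^2 w$), the definition of the trace operators, and the symmetry of $\cC$; you have simply written out these three steps in detail.
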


\begin{proof}
The statement is due to identity \eqref{bilap} (with $D=1$), the symmetry of $\cC$,
and an application of the trace operators.
\end{proof}

Analogously to \cite[Lemmas~3.2,~3.3]{FuehrerHN_19_UFK}, the next lemma establishes that
the norms in the trace spaces are equal to the corresponding dual norms. 
For an abstract proof, we refer to \cite[Lemma~28]{Heuer_GMP}.

\begin{lemma}\label{lem_duality}
  Any  $\tr{u}\in H^{3/2,1/2}(\Gamma)$ and $\tr{m}\in H^{-3/2,-1/2}(\Gamma)$ satisfy
  \begin{alignat*}{3}
    \|\tr{u}\|_{3/2,1/2,\Gamma}
    &= \sup_{\bQ\in H(\dDiv,\Omega;\SS)\setminus\{0\}}
      \frac{\dual{\tr{u}}{\bQ}}{\|\bQ\|_{\dDiv,\Omega}}
    &&= \sup_{\tr{q}\in H^{-3/2,-1/2}(\Gamma) \setminus\{0\}}
      \frac{\dual{\tr{u}}{\tr{q}}}{\|\tr{q}\|_{-3/2,-1/2,\Gamma}},\\
    \|\tr{m}\|_{-3/2,-1/2,\Gamma}
    &= \sup_{v\in H^2(\Omega)\setminus\{0\}} \frac{\dual{\tr{m}}{v}}{\|v\|_{2,\Omega}}
    &&= \sup_{\tr{v}\in H^{3/2,1/2}(\Gamma) \setminus\{0\}}
      \frac{\dual{\tr{m}}{\tr{v}}}{\|\tr{v}\|_{3/2,1/2,\Gamma}}.
  \end{alignat*}
\end{lemma}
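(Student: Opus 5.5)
We plan to prove the identities for $\tr{u}$ only; the case of $\tr{m}$ is symmetric, with the roles of $H^2(\Omega)$ and $H(\dDiv,\Omega;\SS)$ exchanged. The key tool is a minimal-norm extension characterized by a variational problem. Given $\tr{u}\in H^{3/2,1/2}(\Gamma)$, the set of extensions $\{v\in H^2(\Omega);\ \trGg{v}=\tr{u}\}$ is a closed affine subspace of $H^2(\Omega)$. This holds because $\trGgwo:H^2(\Omega)\to H(\dDiv,\Omega;\SS)^*$ is bounded, by Cauchy--Schwarz in the defining formula. Hence there is a unique extension $u$ of minimal norm, so $\|\tr{u}\|_{3/2,1/2,\Gamma}=\|u\|_{2,\Omega}$, and $u$ is $H^2$-orthogonal to $\ker\trGgwo$.

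The first step is to identify $\ker\trGgwo$ with $H^2_0(\Omega)$. The inclusion $H^2_0(\Omega)\subset\ker\trGgwo$ follows by density of $C_0^\infty$ and integration by parts. For the reverse inclusion, if $\trGg{v}=0$, then testing with all smooth $\bQ$ and using \eqref{pre_dual} shows that the zero extension $\tilde v$ satisfies $\Gradgrad\tilde v\in L_2(\R^2)$. For Lipschitz $\Omega$ this gives $v\in H^2_0(\Omega)$. Orthogonality then reads
\[
\vdual{u}{w}_\Omega+\vdual{\Gradgrad u}{\Gradgrad w}_\Omega=0\quad\forall w\in H^2_0(\Omega).
\]
Setting $\bQ:=-\Gradgrad u\in L_2(\Omega;\SS)$, this says $\divDiv\bQ=u$ in the distributional sense, so $\bQ\in H(\dDiv,\Omega;\SS)$ and $\|\bQ\|_{\dDiv,\Omega}^2=\|u\|_{2,\Omega}^2$.

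Next we evaluate
\[
\dual{\tr{u}}{\bQ}=\vdual{\divDiv\bQ}{u}_\Omega-\vdual{\bQ}{\Gradgrad u}_\Omega=\|u\|_{2,\Omega}^2=\|\bQ\|_{\dDiv,\Omega}\,\|u\|_{2,\Omega},
\]
so this $\bQ$ attains the supremum. The reverse bound holds for any extension $v$ of $\tr{u}$, since $\dual{\tr{u}}{\bQ}\le\|v\|_{2,\Omega}\|\bQ\|_{\dDiv,\Omega}$ by Cauchy--Schwarz, and taking the infimum over $v$ gives it. This proves the first equality.

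For the second equality, note that $\dual{\tr{u}}{\bQ}$ depends on $\bQ$ only through $\trdD{\bQ}$. We therefore take the infimum over $\bQ$ with fixed trace $\tr{q}$ in the first supremum, which turns it into the supremum over $\tr{q}$ with the trace norm in the denominator. The kernel of $\trdDwo$ must be a closed subspace for the quotient norm to be well defined, and it is, by the same boundedness argument.

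The main obstacle is the identification $\ker\trGgwo=H^2_0(\Omega)$. If it proves delicate, it can be bypassed by choosing the relevant test space directly. The orthogonality condition only needs to hold against $\ker\trGgwo$, and every $\varphi\in C_0^\infty$ lies there, so $\divDiv(-\Gradgrad u)=u$ still follows.

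For the $\tr{m}$ case, the minimal-norm extension $\bM$ is orthogonal to $\ker\trdDwo\supset C_0^\infty(\Omega;\SS)$. This yields $\bM=\Gradgrad z$ with $z:=\divDiv\bM$, so $z\in H^2(\Omega)$. Testing with $z$ then gives $\dual{\tr{m}}{z}=\|\bM\|_{\dDiv,\Omega}^2$ up to sign, and the argument concludes as before.
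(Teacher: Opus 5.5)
Your proposal is correct and is essentially the argument the paper relies on. The paper gives no proof of its own; it defers to \cite[Lemmas~3.2,~3.3]{FuehrerHN_19_UFK} and to the abstract version \cite[Lemma~28]{Heuer_GMP}, and your steps match theirs: take the minimal-norm extension, use its orthogonality to compactly supported smooth functions to get $\divDiv(-\Gradgrad u)=u$ so that $\bQ:=-\Gradgrad u$ attains the supremum, and pass to the infimum over extensions for the second equality.

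One small slip occurs in the $\tr{m}$ case. Orthogonality gives $\bM=-\Gradgrad z$ with $z:=\divDiv\bM$, not $+\Gradgrad z$; with $+\Gradgrad z$ you would get $\|z\|_\Omega^2-\|\bM\|_\Omega^2$, not $\pm\|\bM\|_{\dDiv,\Omega}^2$, so "up to sign" does not cover it. With the correct sign you get exactly $\dual{\tr{m}}{z}=\|\bM\|_{\dDiv,\Omega}^2=\|z\|_{2,\Omega}^2$. The claim $z\in H^2(\Omega)$ then uses the standard fact that $z\in L_2(\Omega)$ and $\Gradgrad z\in L_2(\Omega)$ imply $z\in H^2(\Omega)$ on bounded Lipschitz domains.
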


For our approximation analysis, we need an intrinsic norm in $H^{3/2,1/2}(\Gamma)$,
\begin{equation} \label{tnorm}
   \tnorm{\tr{v}}_{3/2,1/2,\Gamma}^2
   :=\|\trD{v}\|_{1,\Gamma}^2
   + \|(\partial_t \trD{v},\trN{v})^\top\cdot{t}\|_{1/2,\Gamma}^2
   + \|(\partial_t \trD{v},\trN{v})^\top\cdot{n}\|_{1/2,\Gamma}^2
\end{equation}
for $\tr{v}=(\trD{v},\trN{v})\in H^{3/2,1/2}(\Gamma)$, cf.~\cite{Jakovlev_61_BPF,SchmidtK_99_BIE}.

\begin{lemma} \label{lem_isomorphic}
The trace space $(H^{3/2,1/2}(\Gamma), \|\cdot\|_{3/2,1/2,\Gamma})$ is isomorphic to
\(
  \{\tr{v}=(\trD{v},v_{n});\; \tnorm{\tr{v}}_{3/2,1/2,\Gamma}<\infty\}.
\)
In particular, any $\tr{v}=(\trD{v},\trN{v})\in H^{3/2,1/2}(\Gamma)$ satisfies
\begin{align} \label{trestH2}
    \sum_{{\Gamma'}\in\cE(\Omega)}
    \|\partial_t\trD{v}\|_{1/2,{\Gamma'}}^2 + \|\trN{v}\|_{1/2,{\Gamma'}}^2
    \lesssim \|\tr{v}\|_{3/2,1/2,\Gamma}^2.
\end{align}
\end{lemma}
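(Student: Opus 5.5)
The plan is to reduce the statement to the classical characterization of traces of $H^2(\Omega)$ on piecewise smooth Lipschitz boundaries due to Jakovlev \cite{Jakovlev_61_BPF}, as used in \cite{SchmidtK_99_BIE}. The identification of $\trGg{v}$ with the pair $(\trD{v},\trN{v})=(v|_\Gamma,\partial_n v|_\Gamma)$ is first done for smooth $v$ by \eqref{pre_dual}. We then extend it by density, using that $\trGgwo$ is continuous from $H^2(\Omega)$ into $H(\dDiv,\Omega;\SS)^*$. This gives two facts: $\|\trGg{v}\|_{3/2,1/2,\Gamma}=\inf\{\|w\|_{2,\Omega};\ \trGg{w}=\trGg{v}\}$, and $\trGg{w}=\trGg{v}$ holds if and only if $w-v$ and $\grad(w-v)$ have vanishing traces. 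Both follow from the duality \eqref{dual} applied to smooth $\bQ$, which separates the components $\trD{}$ and $\trN{}$. Consequently $H^{3/2,1/2}(\Gamma)$ with its canonical norm coincides isometrically with the quotient $H^2(\Omega)/H^2_0(\Omega)$, realized through the map $v\mapsto(v|_\Gamma,\partial_n v|_\Gamma)$.

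The upper bound $\tnorm{\trGg{v}}_{3/2,1/2,\Gamma}\lesssim\|v\|_{2,\Omega}$ comes from the identity $(\partial_t\trD{v},\trN{v})^\top=(t\cdot\grad v, n\cdot\grad v)^\top$. Its scalar products with $t$ and $n$ are exactly the Cartesian components of $\grad v|_\Gamma$, since $(t,n)$ is orthonormal. As $\grad v\in H^1(\Omega)^2$, each component has a trace in $H^{1/2}(\Gamma)$ on the whole Lipschitz curve, with norm bounded by $\|v\|_{2,\Omega}$. In addition, $\|\trD{v}\|_{1,\Gamma}\lesssim\|v\|_{2,\Omega}$, because $v|_\Gamma\in H^1(\Gamma)$ has tangential derivative $t\cdot\grad v|_\Gamma\in H^{1/2}(\Gamma)\subset L_2(\Gamma)$. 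Taking the infimum over extensions gives $\tnorm{\cdot}\lesssim\|\cdot\|_{3/2,1/2,\Gamma}$.

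The converse is the main obstacle: an extension operator. Given $(\trD{v},\trN{v})$ with $\tnorm{\cdot}<\infty$, we define $g:=\trD{v}$ and $\boldsymbol{h}:=\partial_t\trD{v}\,t+\trN{v}\,n$, whose components lie in $H^{1/2}(\Gamma)$. The compatibility $\boldsymbol{h}\cdot t=\partial_t g$ holds by construction. Jakovlev's theorem then provides $v\in H^2(\Omega)$ with $v|_\Gamma=g$ and $\grad v|_\Gamma=\boldsymbol{h}$, together with $\|v\|_{2,\Omega}\lesssim\|g\|_{1,\Gamma}+\|\boldsymbol{h}\|_{1/2,\Gamma}$. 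To check this locally I would localize with a partition of unity, flatten smooth edges, and at each vertex use a bi-Lipschitz (piecewise smooth) map to a sector. The crux is that global $H^{1/2}$-regularity of the Cartesian components of $\boldsymbol{h}$ is exactly the compatibility condition at corners, which is where I would rely on \cite{Jakovlev_61_BPF} rather than reprove it. This yields $\|\cdot\|_{3/2,1/2,\Gamma}\lesssim\tnorm{\cdot}$, and hence the isomorphism.

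Finally, \eqref{trestH2} follows from the upper bound and a restriction argument. On each edge $\Gamma'$ the vectors $t,n$ are smooth, so restriction to $\Gamma'$ and multiplication by smooth functions are bounded in $H^{1/2}$. Writing $\partial_t\trD{v}=\boldsymbol{h}\cdot t$ and $\trN{v}=\boldsymbol{h}\cdot n$ with $\boldsymbol{h}=\grad v|_\Gamma$, each piece is bounded by $\|\boldsymbol{h}\|_{1/2,\Gamma}\lesssim\tnorm{\tr{v}}_{3/2,1/2,\Gamma}\lesssim\|\tr{v}\|_{3/2,1/2,\Gamma}$. Summing over the finitely many edges completes the proof.
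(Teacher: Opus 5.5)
Your proposal is correct and follows essentially the paper's route. You identify $\trGg{v}$ with $(v|_\Gamma,\partial_n v|_\Gamma)$ by density, get the bounded direction from the trace theorem, and take the hard extension direction from Jakovlev's theorem (the paper cites \cite[Lemma~2.2]{SchmidtK_99_BIE}); your explicit kernel argument makes precise the paper's brief ``both operators coincide on a dense subset'' step, and where the paper quotes \cite[Lemma~2.1]{SchmidtK_99_BIE} for \eqref{trestH2}, you derive it from the upper bound by edgewise restriction and multiplication by the smooth $t$, $n$. One small correction: the identification of $(\partial_t\trD{v},\trN{v})^\top\cdot t$ and $(\partial_t\trD{v},\trN{v})^\top\cdot n$ with $\partial_{x_1}v$ and $\partial_{x_2}v$ does not follow from orthonormality alone; it rests on $n=(t_2,-t_1)$, i.e.\ on $(t\,|\,n)$ being a symmetric reflection.
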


\begin{proof}
Let us adopt, only for this proof, the notation $V(\Gamma)$ from \cite{SchmidtK_99_BIE}
for the claimed isomorphic space.
By definition and Lemma~\ref{lem_duality}, $\trGgwo:\;H^2(\Omega)\to H^{3/2,1/2}(\Gamma)$ is
bounded and surjective with continuous right inverse.
Furthermore, by \cite[Lemma~2.2]{SchmidtK_99_BIE} (see also \cite{Jakovlev_61_BPF}),
$I:\;H^2(\Omega)\to V(\Gamma)$ defined by $I(v):=(v|_\Gamma,\partial_n v|_\Gamma)$
is also bounded and surjective with continuous right inverse.
Since both operators coincide on a dense subset of smooth functions, cf.~\eqref{trGg_comp},
the isomorphic relation of both trace spaces follows.
Estimate \eqref{trestH2} is a direct consequence of the bound
\begin{align*}
    \sum_{{\Gamma'}\in \cE(\Omega)} \|v|_\Gamma\|_{3/2,{\Gamma'}}^2 + 
    \|\partial_n v\|_{1/2,{\Gamma'}}^2 \lesssim \|v\|_{2,\Omega}^2 \quad \forall v\in H^2(\Omega),
\end{align*}
see~\cite[Lemma~2.1]{SchmidtK_99_BIE}, and the definition of the trace space and operator.
\end{proof}

\begin{remark} \label{rem_Ggrad}
Some further comments on the trace space $H^{3/2,1/2}(\Gamma)$ are in order.

{\rm (i)} For $v\in H^2(\Omega)$ and $(\trD{v},\trN{v})=\trGg{v}$, the expressions
in $\tnorm{(\trD{v},\trN{v})}_{3/2,1/2,\Gamma}$ are
$(\partial_t \trD{v},\trN{v})^\top\cdot{t}=\partial_{x_1} v$ and
$(\partial_t \trD{v},\trN{v})^\top\cdot{n}=\partial_{x_2} v$, the derivatives of
$v$ in coordinate directions $x_1$, $x_2$, i.e.,
\[
   \tnorm{(\trD{v},\trN{v})}_{3/2,1/2,\Gamma}^2
   = \|v\|_\Gamma^2 + \|{t}\cdot\grad v\|_\Gamma^2 + \|\grad v\|_{1/2,\Gamma}^2.
\]

{\rm(ii)} For smooth curves $\Gamma$, representation \eqref{trGg_comp}
and canonical trace arguments imply that $H^{3/2,1/2}(\Gamma)$
is isomorphic to the product space $H^{3/2}(\Gamma)\times H^{1/2}(\Gamma)$.
\end{remark}

\subsection{Jumps}

Let $\Omega^e:=\R^2\setminus\overline\Omega$ be the exterior domain.
Given any fixed function $\varphi\in C^\infty_0(\R^2)$
with $\varphi=1$ in a neighborhood of $\Gamma$, we define the jump and trace-sum functionals
\begin{align*}
   &\jGg{\cdot}:\; \left\{\begin{array}{cll}
      H^2(\R^2\setminus\Gamma) & \to & H(\dDiv,\R^2;\SS)^*\\
      v & \mapsto & \dual{\jGg{v}}{\bQ}
      := -\vdual{\divDiv\bQ}{v\varphi}_{\R^2} + \vdual{\bQ}{\Gradgrad(v\varphi)}_{\Omega\cup\Omega^e}
   \end{array}\right.,\\[1em]
   &\jdD{\cdot}:\; \left\{\begin{array}{cll}
      H(\dDiv,\R^2\setminus\Gamma;\SS) & \to & H^2(\R^2)^*\\
      \bQ & \mapsto & \dual{\jdD{\bQ}}{v}
      := -\vdual{\divDiv\bQ}{v\varphi}_{\Omega\cup\Omega^e} + \vdual{\bQ}{\Gradgrad(v\varphi)}_{\R^2}
   \end{array}\right.,\\[1em]
   &\avGg{\cdot}:\; \left\{\begin{array}{cll}
      H^2(\R^2\setminus\Gamma) & \to & H(\dDiv,\R^2;\SS)^*\\
      v & \mapsto & \dual{\avGg{v}}{\bQ}
      := \vdual{\divDiv\bQ}{v\varphi}_\Omega
                       - \vdual{\bQ}{\Gradgrad(v\varphi)}_\Omega\\
      &&\hspace{6.3em}
      \quad- \vdual{\divDiv\bQ}{v\varphi}_{\Omega^e} + \vdual{\bQ}{\Gradgrad(v\varphi)}_{\Omega^e}
   \end{array}\right..
\end{align*}
Note the sign change in the jumps in comparison with the trace operators,
implying that the jumps correspond to the traces ``taken from outside'' minus the interior traces.
Here, the $H^2$ trace ``taken from outside'' means the negative exterior trace
\[
   \trGge{v}:=\frac 12\Bigl(\jGg{v}+\avGg{v}\Bigr) \quad \bigl(v\in H^2(\R^2\setminus\Gamma)\bigr),
\]
to account for the change of the normal direction ${n}$ pointing into $\Omega^e$.
The trace $\trdDe{\bQ}$ can be defined analogously for $\bQ\in H(\dDiv,\R^2\setminus\Gamma;\SS)$.
As before, there are dualities
$\dual{\jGg{v}}{\tr{q}}$ and $\dual{\jdD{\bQ}}{\tr{v}}$
between the corresponding
jumps of $v\in H^2(\R^2\setminus\Gamma)$ and $\bQ\in H(\dDiv,\R^2\setminus\Gamma;\SS)$, and
traces $\tr{q}\in H^{-3/2,-1/2}(\Gamma)$ and $\tr{v}\in H^{3/2,1/2}(\Gamma)$.

\section{Layer potentials and boundary integral operators} \label{sec_BIO}

A fundamental solution of $\Delta^2 u=\divDiv\Gradgrad u=0$ is given by
\[
   G(x) := \frac 1{8\pi} |x|^2\log |x|\qquad (x\in\R^2\setminus\{0\})
\]
with extension by continuity $G(0):=0$.
Throughout, $|\cdot|$ stands for the Euclidean norm in $\R^2$.
An application of the second Green identity from Lemma~\ref{la_Green2}
(which holds analogously in $\Omega^e$) gives the following representation formula,
cf.~\cite[Lemma~3.1]{Schmidt_01_BIO}.

\begin{lemma}
Any function $u\in L_2(\R^2)$ with compact support that satisfies
$u|_\Omega\in H^2(\Omega)$, $u|_{\Omega^e}\in H^2_\mathrm{loc}(\Omega^e)$, and
$\Delta^2 u|_{\R^2\setminus\Gamma}=0$, has the representation
\[
   u(x) = \dual{\jdD{\cC\Gradgrad u}}{\trGg{G(x-\cdot)}}
        - \dual{\jGg{u}}{\trdD{\cC\Gradgrad G(x-\cdot)}}
        \quad (x\in\R^2\setminus\Gamma).
\]
\end{lemma}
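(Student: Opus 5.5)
Fix $x\in\Omega$; the case $x\in\Omega^e$ is symmetric. The plan is to apply the second Green identity of Lemma~\ref{la_Green2} on $\Omega$ minus a small disk and on $\Omega^e$ intersected with a large ball. Since $\Delta^2 u=0$ on both pieces and $\Delta^2 G(x-\cdot)=\delta_x$, the only non-boundary contribution is the point evaluation. We cannot apply Lemma~\ref{la_Green2} directly to $w=G(x-\cdot)$ on $\Omega$, because $G(x-\cdot)\in H^2_{\mathrm{loc}}$ but $\Delta^2 G(x-\cdot)=\delta_x\notin L_2$. We therefore first regularize the fundamental solution: choose a cutoff $\chi\in C_0^\infty(\R^2)$ with $\chi=1$ near $x$ and support in a small ball $B_\rho(x)\subset\Omega$, and split $G(x-\cdot)=\chi G(x-\cdot)+(1-\chi)G(x-\cdot)=:G_1+G_2$. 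Then $G_2$ is smooth near $\overline\Omega$, and $\Delta^2G_2=-\Delta^2 G_1+\delta_x$ is a smooth function supported in the annulus where $\nabla\chi\neq0$.

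For the interior part we apply Lemma~\ref{la_Green2} on $\Omega$ with $v=u|_\Omega$ and $w=G_2$. This gives
\[
\vdual{u}{\Delta^2G_2}_\Omega=\dual{\trdD{\cC\Gradgrad G_2}}{\trGg{u}}-\dual{\trdD{\cC\Gradgrad u}}{\trGg{G_2}},
\]
and since $G_1$ vanishes near $\Gamma$, the traces of $G_2$ coincide with those of $G(x-\cdot)$. The left-hand side equals $-\vdual{u}{\Delta^2G_1}_{\Omega}$. This term is identified with $-u(x)$ by the classical interior representation: $u$ is biharmonic, hence smooth in $B_\rho(x)$, and integrating by parts four times on $B_\rho(x)$ against the compactly supported $G_1$ gives $\vdual{u}{\Delta^2 G_1}=\dual{\Delta^2 G_1}{u}=u(x)$ in the distributional sense. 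This step uses only that $G$ is the fundamental solution and that $\chi G$ has compact support.

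For the exterior part we apply the exterior analogue of Lemma~\ref{la_Green2} on $\Omega^e$. Since $u$ has compact support, we work on $\Omega^e\cap B_R$ with $R$ large; there are no terms at $|y|=R$ because $u$ vanishes there. More cleanly, we insert the cutoff $\varphi$ from the definition of the jumps and note that $u\varphi=u$ near $\Gamma$. Here $G(x-\cdot)$ is smooth on $\overline{\Omega^e}$ and biharmonic, so the identity yields $0$ equal to the difference of the exterior traces, with sign conventions as in $\trGge{\cdot}$ and $\trdDe{\cdot}$. Adding the interior and exterior identities, the terms in $u$ recombine via the definitions of $\jGg{\cdot}$ and $\jdD{\cdot}$ (exterior minus interior). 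Meanwhile $G(x-\cdot)$ has no jump across $\Gamma$, so its interior and exterior traces agree and can be written as $\trGg{G(x-\cdot)}$ and $\trdD{\cC\Gradgrad G(x-\cdot)}$. The result is $u(x)=\dual{\jdD{\cC\Gradgrad u}}{\trGg{G(x-\cdot)}}-\dual{\jGg{u}}{\trdD{\cC\Gradgrad G(x-\cdot)}}$.

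The main obstacle is the bookkeeping of signs and the rigorous justification of the exterior identity. The jump functionals are defined through volume integrals with the cutoff $\varphi$, with test functions that are smooth but not compactly supported. I would verify the identity directly from these definitions: write $\dual{\jdD{\cC\Gradgrad u}}{G(x-\cdot)}$ and $\dual{\jGg{u}}{\cC\Gradgrad G(x-\cdot)}$ as volume integrals over $\Omega\cup\Omega^e$, and use the symmetry of $\cC$ to cancel the $\vdual{\cC\Gradgrad u}{\Gradgrad(G\varphi)}$-type terms. Because $u$ is biharmonic off $\Gamma$, only the term $\vdual{u}{\Delta^2(\varphi G(x-\cdot))}_{\Omega\cup\Omega^e}$ survives. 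Here $\varphi=1$ near $\Gamma$, and we may take $\varphi=1$ on $\operatorname{supp}u$, since $\varphi$ can be chosen freely and the functionals do not depend on this choice. With that choice, this term reduces to $u(x)$ by the distributional argument of the previous paragraph.
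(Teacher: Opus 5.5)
Your route is the paper's. The paper only says the formula follows from the second Green identity (Lemma~\ref{la_Green2}) in $\Omega$ and, analogously, in $\Omega^e$, cf.~\cite[Lemma~3.1]{Schmidt_01_BIO}. Your splitting $G(x-\cdot)=G_1+G_2$ is the right device for making Lemma~\ref{la_Green2} applicable. However, the one step that produces the point value is wrong.

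You claim that integrating by parts against the compactly supported $G_1$ gives $\dual{\Delta^2G_1}{u}=u(x)$. It actually gives $\dual{\Delta^2G_1}{u}=\vdual{G_1}{\Delta^2u}=0$, because $u$ is biharmonic on $\operatorname{supp}\chi$. The value $u(x)$ comes from the $\delta_x$ in $\Delta^2G$, not from $G_1$. Distributionally $\Delta^2G_2=\delta_x-\Delta^2G_1$, so
\[
  \vdual{u}{\Delta^2G_2}_\Omega = u(x)-\dual{\Delta^2G_1}{u} = u(x),
\]
not $-u(x)$. Your chain mixes two meanings of $\Delta^2G_1$. The identity $\vdual{u}{\Delta^2G_2}_\Omega=-\vdual{u}{\Delta^2G_1}_\Omega$ holds only for the pointwise (annulus) part of $\Delta^2G_1$, which integrates against $u$ to $-u(x)$. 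The distributional $\Delta^2G_1$ pairs with $u$ to $0$. Neither equals $u(x)$. As a check, take $u\equiv1$ near $x$: then $\int_{\R^2}\Delta^2G_2=\int_{\partial B_R(x)}\partial_r\Delta G(x-\cdot)\ds=1$.

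This is not a cosmetic slip. With the correct value, Lemma~\ref{la_Green2} gives $u(x)=\dual{\trdD{\cC\Gradgrad G(x-\cdot)}}{\trGg{u}}-\dual{\trdD{\cC\Gradgrad u}}{\trGg{G(x-\cdot)}}$, consistent with Lemma~\ref{la_rep}. The jumps carry a minus sign on interior traces: for $u=0$ on $\Omega^e$ one has $\jGg{u}=-\trGg{u}$ and $\jdD{\cC\Gradgrad u}=-\trdD{\cC\Gradgrad u}$. So adding the exterior identity yields the claimed formula. With your $-u(x)$, the same bookkeeping puts $-u(x)$ on the left, so your concluding sentence does not follow from what precedes it.

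Your final paragraph is a sound and more self-contained alternative to citing Schmidt. There you verify directly from the definitions of $\jGg{\cdot}$ and $\jdD{\cdot}$, taking $\varphi=1$ on a neighborhood of $\operatorname{supp}u$ and cancelling the $\Gradgrad$-terms by symmetry of $\cC$. It needs two repairs:
\begin{enumerate}
\item $\cC\Gradgrad G(x-\cdot)\notin H(\dDiv,\R^2;\SS)$, so in $\jGg{u}$ you must insert $\cC\Gradgrad G_2$ instead, which has the same trace on $\Gamma$.
\item The surviving term is then $\vdual{u}{\Delta^2G_2}_{\R^2}$. Its value $u(x)$ must come from the computation above, not from your interior paragraph, which would return $-u(x)$.
\end{enumerate}
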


This representation\footnote{and the sign convention in the representation formula of the Laplacian}
motivates the definition of the single-layer and double-layer potentials
\begin{alignat*}{2}
   &\Vpot\tr{q}(x) := \dual{\tr{q}}{\trGg{G(x-\cdot)}}
   &&\bigl(\tr{q}\in H^{-3/2,-1/2}(\Gamma),\ x\in\R^2\setminus\Gamma\bigr),\\
   &\Kpot\tr{v}(x) := -\dual{\trdD{\cC\Gradgrad G(x-\cdot)}}{\tr{v}}
   \quad
   &&\bigl(\tr{v}\in H^{3/2,1/2}(\Gamma),\ x\in\R^2\setminus\Gamma\bigr).
\end{alignat*}

\begin{lemma}[{\cite[Lemma~3.2,~Corollary~3.1, (3.6)]{Schmidt_01_BIO}}] \label{la_rep}
The operators $\Vpot:\; H^{-3/2,-1/2}(\Gamma) \to H^2_\mathrm{loc}(\R^2)$ and
$\Kpot:\; H^{3/2,1/2}(\Gamma) \to H^2_\mathrm{loc}(\Omega\cup\Omega^e)$ are bounded.
Furthermore, $u\in H^2(\Omega)$ with $\Delta^2 u=0$ satisfies
\[
   \Vpot\trdD{-\cC\Gradgrad u} - \Kpot\trGg{u}
   = \begin{cases} u &\text{in}\ \Omega,\\ 0 & \text{in}\ \Omega^e.\end{cases}
\]
\end{lemma}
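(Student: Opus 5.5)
The plan is to derive the lemma from the representation formula stated just before it. For the mapping properties we identify $\Vpot$ and $\Kpot$ with the potentials of \cite{Schmidt_01_BIO}, and then transfer the boundedness results stated there.

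\emph{Step 1: the representation.} Given $u\in H^2(\Omega)$ with $\Delta^2u=0$, we set $\tilde u:=u$ in $\Omega$ and $\tilde u:=0$ in $\Omega^e$. Then $\tilde u\in L_2(\R^2)$ has compact support, $\tilde u|_\Omega\in H^2(\Omega)$, $\tilde u|_{\Omega^e}\in H^2_{\mathrm{loc}}(\Omega^e)$, and $\Delta^2\tilde u=0$ on $\R^2\setminus\Gamma$. So the preceding representation lemma applies.

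Next we compute the jumps. Since $\tilde u$ and $\cC\Gradgrad\tilde u$ vanish on $\Omega^e$, the exterior contributions drop out of the definitions of $\jGg{\cdot}$ and $\jdD{\cdot}$. What remains is
\[
\dual{\jGg{\tilde u}}{\bQ}=-\vdual{\divDiv\bQ}{u\varphi}_\Omega+\vdual{\bQ}{\Gradgrad(u\varphi)}_\Omega=-\dual{\trGg{u}}{\bQ},
\]
where we used $\varphi=1$ near $\Gamma$: the cutoff can be removed because $\bQ$ and $\bQ\varphi$ have the same trace. Analogously, $\jdD{\cC\Gradgrad\tilde u}=-\trdD{\cC\Gradgrad u}=\trdD{-\cC\Gradgrad u}$.

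Inserting these into the representation formula gives
\[
\tilde u(x)=\dual{\trdD{-\cC\Gradgrad u}}{\trGg{G(x-\cdot)}}+\dual{\trGg{u}}{\trdD{\cC\Gradgrad G(x-\cdot)}},
\]
which equals $\Vpot\trdD{-\cC\Gradgrad u}-\Kpot\trGg{u}$ by the definitions of the potentials and the consistent duality pairing. This is exactly the claimed identity, with value $u$ in $\Omega$ and $0$ in $\Omega^e$.

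One technical point must be checked: for $x\notin\Gamma$, the kernels $G(x-\cdot)$ and $\cC\Gradgrad G(x-\cdot)$ are smooth near $\Gamma$ but not globally in $H^2(\Omega)$ or $H(\dDiv,\Omega;\SS)$. Their traces are therefore understood after multiplication by a cutoff that equals $1$ near $\Gamma$ and vanishes near $x$. The trace does not depend on the choice of cutoff.

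\emph{Step 2: boundedness.} Here the main obstacle is that our trace spaces are defined abstractly through $\trGgwo$ and $\trdDwo$, while \cite{Schmidt_01_BIO} uses component-wise spaces. By Lemma~\ref{lem_isomorphic}, $H^{3/2,1/2}(\Gamma)$ is isomorphic to Schmidt's space $V(\Gamma)$, via operators that coincide on smooth functions. By Lemma~\ref{lem_duality}, $H^{-3/2,-1/2}(\Gamma)$ is isometric to the dual of $H^{3/2,1/2}(\Gamma)$, hence isomorphic to $V(\Gamma)^*$. Under these identifications our pairings agree with Schmidt's on smooth data, by \eqref{dual}, so the potentials coincide on dense subsets. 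The bounds from \cite[Lemma~3.2, Corollary~3.1]{Schmidt_01_BIO} then extend by density and continuity.

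Alternatively, boundedness of $\Vpot$ into $H^2_{\mathrm{loc}}(\R^2)$ can be seen directly: for a compact set $B$ and a test function, Fubini's theorem gives a duality against $\trGg{\int G(\cdot-y)\psi(y)\,dy}$, which is an $H^4_{\mathrm{loc}}$ function. Bounding this with Lemma~\ref{lem_duality} yields the estimate. The same argument works for $\Kpot$ on compact subsets of $\Omega$ and of $\Omega^e$.
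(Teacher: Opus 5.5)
Your proof is correct. For the identity, though, it takes a different route from the paper. The paper gives no argument and simply cites \cite[Lemma~3.2, Corollary~3.1, (3.6)]{Schmidt_01_BIO} for both the mapping properties and the representation. For boundedness you do the same thing: you transfer Schmidt's results through the identification of trace spaces. For the identity, however, you derive it from the preceding representation lemma by inserting the zero extension $\tilde u$ and computing $\jGg{\tilde u}=-\trGg{u}$ and $\jdD{\cC\Gradgrad\tilde u}=\trdD{-\cC\Gradgrad u}$. This checks that the paper's own conventions produce exactly the stated formula rather than a version with flipped signs: jumps are taken as outside minus inside, and $\Kpot$ carries a minus sign. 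Citing Schmidt directly would require translating his conventions silently. Your identity is also consistent with Lemma~\ref{la_jump}: applying $\jGg{\cdot}$ to it gives $\jGg{\Kpot\trGg{u}}=\trGg{u}$. Both routes ultimately rest on Schmidt's representation formula, which the paper also only cites.

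A few details need tightening.
\begin{enumerate}
\item In $\jGg{\cdot}$ and $\jdD{\cdot}$ the cutoff multiplies the $H^2$ function, not $\bQ$. It can be dropped because $u(1-\varphi)$ vanishes near $\Gamma$ and therefore has zero trace.
\item Since $G(x-\cdot)\in H^2_{\mathrm{loc}}(\R^2)$, a cutoff is only needed for $\cC\Gradgrad G(x-\cdot)$, whose $\divDiv$ is $\delta_x$.
\item Lemma~\ref{lem_duality} gives only an isometric embedding of $H^{-3/2,-1/2}(\Gamma)$ into the dual of $H^{3/2,1/2}(\Gamma)$. For surjectivity, represent a functional $\ell$ by $v\in H^2(\Omega)$ with $\vdual{v}{w}_\Omega+\vdual{\Gradgrad v}{\Gradgrad w}_\Omega=\ell(\trGg{w})$ for all $w\in H^2(\Omega)$. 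Then take $\bQ:=-\Gradgrad v$, so that $\divDiv\bQ=v$ and $\trdD{\bQ}=\ell$.
\item Your alternative direct argument works for $\Vpot$. For $\Kpot$, however, working on compact subsets of $\Omega$ and $\Omega^e$ only gives interior smoothness. Here $H^2_{\mathrm{loc}}(\Omega\cup\Omega^e)$ has to mean $H^2$ up to $\Gamma$ from each side. Otherwise $\jGg{\Kpot\tr{v}}$ and $\avGg{\Kpot\tr{v}}$, used in Lemma~\ref{la_jump} and in the definition of $\K$, would not be defined. This up-to-the-boundary bound is the real content of Schmidt's result, so the alternative cannot replace the citation for $\Kpot$.
\end{enumerate}
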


\begin{lemma}[{\cite[Lemma~3.5]{Schmidt_01_BIO}}] \label{la_jump}
The operators $\Vpot$ and $\Kpot$ satisfy the jump relations
\begin{align*}
   \jGg{\Vpot\tr{q}} = 0\quad \forall\tr{q}\in H^{-3/2,-1/2}(\Gamma),\qquad
   \jGg{\Kpot\tr{v}} = \tr{v}\quad \forall\tr{v}\in H^{3/2,1/2}(\Gamma).
\end{align*}
\end{lemma}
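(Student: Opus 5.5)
The plan is to derive both jump relations from the representation formula of Lemma~\ref{la_rep}, together with density and the mapping properties stated there. The proof of the first relation and the proof of the second relation both rest on the surjectivity of the trace operators.

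\emph{Single-layer jump.} By Lemma~\ref{la_rep}, $\Vpot\tr{q}\in H^2_\mathrm{loc}(\R^2)$, so with the cutoff $\varphi$ we have $\Vpot\tr{q}\,\varphi\in H^2(\R^2)$. The definition of $\jGg{\cdot}$ then gives, for any $\bQ\in H(\dDiv,\R^2;\SS)$,
\[
   \dual{\jGg{\Vpot\tr{q}}}{\bQ}
   = -\vdual{\divDiv\bQ}{\Vpot\tr{q}\,\varphi}_{\R^2}
     + \vdual{\bQ}{\Gradgrad(\Vpot\tr{q}\,\varphi)}_{\R^2} = 0 .
\]
This vanishes by integration by parts on all of $\R^2$: both functions are globally regular and $\varphi$ has compact support. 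The identity holds first for smooth compactly supported $\bQ$, and extends by density of $C_0^\infty(\R^2;\SS)$ in $H(\dDiv,\R^2;\SS)$ (the latter being defined distributionally). Hence $\jGg{\Vpot\tr{q}}=0$.

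\emph{Double-layer jump.} Let $\tr{v}\in H^{3/2,1/2}(\Gamma)$. I would choose $u\in H^2(\Omega)$ with $\Delta^2u=0$ and $\trGg{u}=\tr{v}$, obtained by solving the biharmonic Dirichlet problem with data $\tr{v}$: take any $H^2$ extension of $\tr{v}$ and correct it by an $H^2_0(\Omega)$ solution via Lax--Milgram. Set $\tr{q}:=\trdD{-\cC\Gradgrad u}$. Lemma~\ref{la_rep} yields
\[
   \Kpot\tr{v} = \Vpot\tr{q} - w,\qquad w:=u\ \text{in }\Omega,\quad w:=0\ \text{in }\Omega^e .
\]
Since $\jGg{\cdot}$ is linear and $\jGg{\Vpot\tr{q}}=0$ by the first part, it remains to show $\jGg{w}=-\tr{v}$. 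From the definition, using $\varphi=1$ near $\Gamma$,
\[
   \dual{\jGg{w}}{\bQ}
   = -\vdual{\divDiv\bQ}{u\varphi}_{\Omega} + \vdual{\bQ}{\Gradgrad(u\varphi)}_{\Omega}
   = -\dual{\trGg{u\varphi}}{\bQ|_\Omega}
   = -\dual{\trGg{u}}{\bQ|_\Omega}
   = -\dual{\tr{v}}{\bQ|_\Omega} .
\]
The step $\trGg{u\varphi}=\trGg{u}$ follows because $u(1-\varphi)$ vanishes near $\Gamma$, so its trace functional is zero by integration by parts, again via density of smooth functions. Therefore $\jGg{\Kpot\tr{v}}=0-(-\tr{v})=\tr{v}$. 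This identification is an equality of functionals on $H(\dDiv,\R^2;\SS)$. It is consistent with the stated duality of jumps with $H^{-3/2,-1/2}(\Gamma)$, because restriction $\bQ\mapsto\bQ|_\Omega$ maps onto $H(\dDiv,\Omega;\SS)$ up to the trace identification.

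\emph{Main obstacle.} The most delicate point is justifying the density and integration-by-parts arguments in $H(\dDiv,\cdot\,;\SS)$, and confirming that the jump functionals are well defined modulo the choice of $\varphi$. I would handle this by citing the corresponding results of \cite{FuehrerHN_19_UFK}, or alternatively by following \cite[Lemma~3.5]{Schmidt_01_BIO} directly.
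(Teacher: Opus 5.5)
Your argument is correct. Note that the paper contains no proof of this lemma: it is imported from \cite[Lemma~3.5]{Schmidt_01_BIO}, like the mapping properties and the representation formula in Lemma~\ref{la_rep}.

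You instead derive both relations inside the paper's own framework from Lemma~\ref{la_rep}. The single-layer relation is immediate from $\Vpot\tr{q}\,\varphi\in H^2(\R^2)$. For the double layer you take a biharmonic $u$ with $\trGg{u}=\tr{v}$, rewrite $\Kpot\tr{v}=\Vpot\trdD{-\cC\Gradgrad u}-u\chi_\Omega$, and compute $\jGg{u\chi_\Omega}=-\tr{v}$ directly from the definition. Here $\tr{v}$ is read as the functional $\bQ\mapsto\dual{\tr{v}}{\bQ|_\Omega}$.

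This buys two things:
\begin{itemize}
\item Only the representation formula has to come from the literature. There is no circularity, since that formula follows from the Green identity alone, as in the unnumbered lemma preceding Lemma~\ref{la_rep}.
\item The sign is checked against the paper's own conventions rather than transported from \cite{Schmidt_01_BIO}, which uses a different representation of the trace spaces (cf.\ the proof of Lemma~\ref{lem_isomorphic}). The relevant conventions are jumps with the sign opposite to the trace operators (exterior minus interior) and Dirichlet traces via $\trGgwo$. Your result is also consistent with \eqref{trace_DL}.
\end{itemize}
The cost is one extra, standard ingredient: solvability of the biharmonic Dirichlet problem by Lax--Milgram on $H^2_0(\Omega)$.

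As for the obstacle you flag, the only density actually needed is that of $C_0^\infty$ in two spaces, together with the distributional definition of $\divDiv\bQ$:
\begin{itemize}
\item in $H^2(\R^2)$, for $\Vpot\tr{q}\,\varphi$;
\item in $H^2_0(\Omega)$, for $u(1-\varphi)$ and for the $H^2_0$ correction.
\end{itemize}
The density of $C_0^\infty(\R^2;\SS)$ in $H(\dDiv,\R^2;\SS)$ that you invoke does hold, but not by naive truncation. The reason is that $\divDiv(\chi\bQ)$ contains the term $2\grad\chi\cdot\Div\bQ$, and $\Div\bQ$ need not lie in $L_2$.
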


We define the single-layer and double-layer integral operators
\begin{alignat*}{2}
   &\V:\; H^{-3/2,-1/2}(\Gamma)\to H^{3/2,1/2}(\Gamma),\quad
   &&\V\tr{q} := (\Vo\tr{q},\Vt\tr{q}) := \trGg{\Vpot\tr{q}},\\
   &\K:\; H^{3/2,1/2}(\Gamma)\to H^{3/2,1/2}(\Gamma),\quad
   &&\K\tr{v} := (\Ko\tr{v},\Kt\tr{v}) := \frac 12 \avGg{\Kpot\tr{v}},
\end{alignat*}
cf.~the notation from \eqref{trGg_comp}. Note that by the jump relation of $\Kpot$,
the definition of $\K$ is equivalent to the identity
\begin{equation} \label{trace_DL}
   \K\tr{v} = \trGg{\Kpot\tr{v}}+\frac 12\tr{v}
\end{equation}
for $\tr{v}\in H^{3/2,1/2}(\Gamma)$.
As a consequence of Lemmas~\ref{la_rep},~\ref{la_jump} and relation \eqref{trace_DL},
we conclude that the traces $\tr{u}:=\trGg{u}$ and $\tr{m}:=\trdD{\bM}$
of the solution $u$ to \eqref{model} and $\bM:=-\cC\Gradgrad u$
satisfy the boundary integral equation
\begin{align} \label{BIE}
   \V\tr{m} = \K\tr{u} + \frac 12\tr{u}.
\end{align}
The next two theorems provide integral representations of both operators,
first for the single-layer and second for the double-layer operator.
For their proofs, we use standard arguments from the theory of boundary integral operators,
see, e.g., \cite{McLean_00_SES,Steinbach_08_NAM}.
In case of smooth boundaries $\Gamma$, such representations are already stated in~\cite[Section~10.4.4]{HsiaoW_21_BIE}.

\begin{theorem}\label{thm_BIO_V}
Let $\bQ\in H(\dDiv,\Omega;\SS)$ with trace $\tr{q}:=\trdD{\bQ}=(\trnn{q},\trsf{q},\trJ{q})$,
cf.~\eqref{trdD_comp}, and assume that $\trnn{q}$, $\trsf{q}$ are bounded.
At $x\in\Gamma$, the single-layer operator $\V\tr{q}=(\Vo\tr{q},\Vt\tr{q})$
has the representation
\begin{align}
\begin{split} \label{V1_rep}
   \Vo\tr{q}(x)
   &= \int_\Gamma \trsf{q}(y) G(x-y)\ds_y
   - \int_\Gamma \trnn{q}(y) \partial_{n_y} G(x-y)\ds_y
   + \sum_{z\in\cN(\Omega)} \trJ{q}(z) G(x-z)
   \\&= \frac 1{8\pi} \int_\Gamma \trsf{q}(y) |x-y|^2\log|x-y|\ds_y
    + \frac 1{8\pi} \int_\Gamma \trnn{q}(y) (2\log|x-y|+1){n}_y\cdot({x}-{y})\ds_y
   \\&\quad
   + \frac 1{8\pi} \sum_{z\in\cN(\Omega)} \trJ{q}(z) |x-z|^2\log|x-z|,
\end{split}
\\
\begin{split}
   \Vt\tr{q}(x)
   &= \int_\Gamma \trsf{q}(y) \partial_{n_x} G(x-y)\ds_y
   - \int_\Gamma \trnn{q}(y) \partial_{n_x}\partial_{n_y} G(x-y)\ds_y
   + \sum_{z\in\cN(\Omega)} \trJ{q}(z) \partial_{n_x}G(x-z)
   \\&=
   \frac 1{8\pi}\int_\Gamma \trsf{q}(y) (2\log|x-y|+1){n}_x\cdot({x}-{y})\ds_y
   \\&\quad
     + \frac 1{8\pi} \int_\Gamma \trnn{q}(y)
        \Bigl((2\log|x-y|+1){n}_x\cdot{n}_y
            + 2\frac{{n}_x\cdot({x}-{y})\,{n}_y\cdot({x}-{y})}{|x-y|^2}
        \Bigr) \ds_y
   \\&\quad
   + \frac 1{8\pi} \sum_{z\in\cN(\Omega)} \trJ{q}(z) (2\log|x-z|+1){n}_x\cdot({x}-z)
   \qquad (x\not\in\cN(\Omega)).
\end{split}
   \label{V2_rep}
\end{align}
\end{theorem}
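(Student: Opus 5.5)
The plan is to compute the single-layer potential $\Vpot\tr{q}(x)$ for $x\in\R^2\setminus\Gamma$, and then pass to the trace $\trGgwo$, i.e., take the limits of $\Vpot\tr{q}$ and of $\partial_{n_x}\Vpot\tr{q}$ as $x$ approaches $\Gamma$.

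For fixed $x\notin\overline\Omega$, the function $y\mapsto G(x-y)$ is smooth on $\overline\Omega$. The duality $\dual{\tr{q}}{\trGg{G(x-\cdot)}}$ is therefore given by the integration-by-parts formula \eqref{dual}, provided $\bQ$ is smooth. I first prove this for smooth $\bQ$ and then extend by density in $H(\dDiv,\Omega;\SS)$: both sides are continuous in $\bQ$, since the right-hand side only involves traces paired with a smooth function. Formula \eqref{dual} with $v=G(x-\cdot)$ gives
\[
\Vpot\tr{q}(x)=\int_\Gamma \trsf{q}(y)G(x-y)\ds_y-\int_\Gamma\trnn{q}(y)\partial_{n_y}G(x-y)\ds_y+\sum_{z\in\cN(\Omega)}\trJ{q}(z)G(x-z),
\]
where $\partial_{n_y}$ acts on $y\mapsto G(x-y)$. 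For $x\in\Omega$ the same expression holds: the kernels are weakly singular, the components of $\tr{q}$ are bounded by assumption, and $\Vpot\tr{q}\in H^2_{\rm loc}(\R^2)$ is continuous. One can argue, for example, by excising a small disc around $x$, or by using that both sides are biharmonic and agree outside.

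For $\Vo$, note that $\jGg{\Vpot\tr{q}}=0$ (Lemma~\ref{la_jump}) and that $\Vpot\tr{q}$ is continuous on $\R^2$. Hence $\Vo\tr{q}(x)$ is the limit of the formula above as $x$ tends to a point of $\Gamma$. The kernels $G(x-y)$ and $\partial_{n_y}G(x-y)=-\frac1{8\pi}(2\log|x-y|+1)n_y\cdot(y-x)$ are only logarithmically singular. With bounded densities, dominated convergence therefore justifies the limit. Inserting $\grad G(w)=\frac1{8\pi}(2\log|w|+1)w$ then gives the explicit form in \eqref{V1_rep}; the sign of $n_y\cdot(x-y)$ comes out as in the statement.

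For $\Vt$, I differentiate under the integral sign in $x$ for $x\notin\Gamma$. This gives the kernels $\partial_{n_x}G(x-y)$ and $\partial_{n_x}\partial_{n_y}G(x-y)$, where $n_x$ is extended as a fixed vector and the direction derivative is taken at the limit point $x_0\in\Gamma\setminus\cN(\Omega)$. The Hessian of $G$ is
\[
\Gradgrad G(w)=\frac1{8\pi}\bigl((2\log|w|+1)\id+2ww^\top/|w|^2\bigr).
\]
It is still only logarithmically singular plus a bounded term. So $\grad\Vpot\tr{q}$ is continuous across $\Gamma$ away from vertices, and the limit is again justified by dominated convergence, uniformly for $x$ near $x_0$. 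The bounded term $ww^\top/|w|^2$ is discontinuous at $w=0$ but bounded, so it poses no problem for integrability. Evaluating $-n_x^\top\Gradgrad G\, n_y$ with the appropriate sign gives \eqref{V2_rep}. The restriction $x\notin\cN(\Omega)$ is needed only because $n_x$ is undefined at vertices.

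I expect the main obstacle to be the rigorous identification of the trace of $\Vpot\tr{q}$ with the pointwise limits. The operator $\trGgwo$ is defined abstractly via duality, so one must check that for this function, with $\Vpot\tr{q}$ and its gradient continuous up to $\Gamma$ from inside, $\trGg{\Vpot\tr{q}}$ coincides with $(\Vpot\tr{q}|_\Gamma,\partial_n\Vpot\tr{q}|_\Gamma)$. I would obtain this from Lemma~\ref{lem_isomorphic}, whose isomorphism agrees with $(v|_\Gamma,\partial_nv|_\Gamma)$ on smooth functions, combined with an approximation of $\Vpot\tr{q}$ by smooth functions that converge uniformly in $C^1$ near $\Gamma$. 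The density step for $\bQ$ also needs care, since approximating $\bQ$ in $H(\dDiv)$ does not control the components pointwise. I would therefore assume the pairing formula \eqref{dual} for the given $\bQ$ directly, interpreting $\tr{q}$ via its components.
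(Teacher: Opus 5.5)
Your proposal is correct and is essentially the paper's proof: you write $\Vpot\tr{q}(\wilde x)$ for $\wilde x\notin\Gamma$ via \eqref{dual} with $v=G(\wilde x-\cdot)$ and pass to the limit $\wilde x\to x$. The limit uses the continuity of $G$ and $\grad G$, the boundedness of $\trnn{q},\trsf{q}$, and the integrability of $\partial_{n_x}\partial_{n_y}G(x-y)$. The paper works directly with $\wilde x\in\Omega$ and takes for granted the identification of $\trGgwo$ with these pointwise limits, a point you rightly flag. One small slip: $\partial_{n_y}G(x-y)=-\frac1{8\pi}(2\log|x-y|+1)\,{n}_y\cdot(x-y)$, so your displayed formula has the wrong sign, although your final claim that \eqref{V1_rep} comes out as stated is correct.
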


\begin{proof}
Let $x\in\Gamma$ (fixed) and $\wilde x\in\Omega$. In this and the following proof,
we abbreviate $\HG:=\cC\Gradgrad G$ and make use of the calculations
\begin{equation} \label{calc}
\begin{split}
   \grad G(x) &= \frac 1{8\pi}(2\log|x|+1){x},\quad
   \partial_n G(x) = \frac 1{8\pi}(2\log|x|+1){n}\cdot{x},\\
   \Delta G(x) &= \frac 1{2\pi}(\log|x|+1),\quad
   \Gradgrad G = \frac 1{8\pi}(2\log|x|+1)\id + \frac 1{4\pi} \frac{xx^\top}{|x|^2},\\
   \partial_{n_x}\partial_{n_y} G(x-y)
   &= -{n}_x\cdot\Gradgrad G(x-y){n}_y\\
   &= -\frac 1{8\pi} (2\log|x-y|+1){n}_x\cdot{n}_y
   - \frac 1{4\pi} \frac{{n}_x\cdot({x}-{y})\, {n}_y\cdot({x}-{y})}{|x-y|^2},\\
   \HG
   &= \nu\mathrm{tr}(\Gradgrad G)\id+(1-\nu)\Gradgrad G
   = \nu\Delta G\id + (1-\nu) \Gradgrad G,\\
   {n}\cdot\HG{n}(x)
   &= \frac \nu{2\pi} (\log|x|+1)
    + \frac {1-\nu}{8\pi} (2\log|x|+1)
    + \frac {1-\nu}{4\pi} \frac {({n}\cdot {x})^2}{|x|^2}\\
   &= \frac {1+3\nu}{8\pi} + \frac {1+\nu}{4\pi} \log|x|
    + \frac {1-\nu}{4\pi} \frac {({n}\cdot {x})^2}{|x|^2},\\
   {t}\cdot\HG{n}(x)
   &= \frac {1-\nu}{4\pi} \frac {{t}\cdot {x}\, {n}\cdot x}{|x|^2},\quad
   {n}\cdot\Div\HG(x)
   = \partial_n\Delta G(x) = \frac 1{2\pi}\partial_n\log|x|.
\end{split}
\end{equation}
According to the definitions of $\V$ and $\Vpot$, we have to show that
\begin{align*}
   \trGg{\wilde x\mapsto \dual{\tr{q}}{\trGg{G(\wilde x-\cdot)}}}
   &= \bigl( \dual{\tr{q}}{\trGg{G(\wilde x-\cdot)}},\;
            \partial_{n_x}\dual{\tr{q}}{\trGg{G(\wilde x-\cdot)}} \bigr)
\end{align*}
tends to the claimed representations of $\bigl(\Vo\tr{q}(x),\Vt\tr{q}(x)\bigr)$
as $\wilde x\to x$ (the second component only for $x\not\in\cN(\Omega)$).
Here and in what follows, $\partial_{n_x}$ of a function depending on $\wilde x$
denotes the directional derivative of the function with respect to $\wilde x$
in direction of the normal vector ${n}$ at $x\in\Gamma\setminus\cN(\Omega)$.

We start with \eqref{V1_rep}.  According to \eqref{dual}, we have
\begin{multline*}
   \dual{\tr{q}}{\trGg{G(\wilde x-\cdot)}}\\
   = \int_\Gamma \trsf{q}(y)G(\wilde x-y) \ds_y
   + \sum_{z\in\cN(\Omega)} \trJ{q}(z) G(\wilde x-z)
   - \int_\Gamma \trnn{q}(y) \partial_{n_y} G(\wilde x-y) \ds_y.
\end{multline*}
The continuity of $G$, $\nabla G$, the boundedness of the data,
and the expression for $\partial_n G(x)$ from \eqref{calc}
lead to the claimed representations of limit \eqref{V1_rep}.
In case of relation \eqref{V2_rep}, the same argument applies by using the integrability
of the kernel $\partial_{n_x}\partial_{n_y}G(x-y)$.
\end{proof}

\begin{theorem}\label{thm_BIO_K}
Let $v\in H^2(\Omega)$ with trace $\tr{v}:=\trGg{v}=(\trD{v},\trN{v})$, cf.~\eqref{trGg_comp}.
Assume that, on every edge $\Gamma'\in\cE(\Omega)$,
 $\trD{v}$ is continuously differentiable and $\trN{v}$ is continuous.
At $x\in\Gamma$, the double-layer operator $\K\tr{v}=(\Ko\tr{v},\Kt\tr{v})$ has the representation
\begin{align}
\begin{split} \label{K1_rep}
   \Ko\tr{v}(x)
   &=
   - \int_\Gamma \trD{v}(y) {n}_y\cdot\Div_y\bigl(\cC\Gradgrad G(x-y)\bigr) \ds_y
   + \int_\Gamma {t}_y\cdot\bigl(\cC\Gradgrad G(x-y)\bigr){n}_y
                 \partial_{t_y}\trD{v}(y) \ds_y
   \\ &\quad
   + \int_\Gamma \trN{v}(y) {n}_y\cdot\bigl(\cC\Gradgrad G(x-y)\bigr){n}_y \ds_y
   + \bigl(\sigma(x)-\frac 12\bigr) \trD{v}(x)
   \\
   &=
   -\frac 1{2\pi}       \int_\Gamma \trD{v}(y) \partial_{n_y}\log|x-y| \ds_y
   +\frac {1-\nu}{4\pi} \int_\Gamma \frac{{t}_y\cdot({x}-{y})\, {n}_y\cdot({x}-{y})}{|x-y|^2}
                                    \partial_{t_y}\trD{v}(y) \ds_y
   \\ &\quad
   +  \int_\Gamma \trN{v}(y)
           \Bigl(
                  \frac {1+3\nu}{8\pi} + \frac {1+\nu}{4\pi} \log|x-y|
                + \frac {1-\nu}{4\pi} \frac {\bigl({n}_y\cdot ({x}-{y})\bigr)^2}{|x-y|^2}
           \Bigr) \ds_y
   + \bigl(\sigma(x)-\frac 12\bigr) \trD{v}(x),
\end{split}
\\
\begin{split} \label{K2_rep}
   \Kt\tr{v}(x)
   &=
   - \int_\Gamma \trD{v}(y) \partial_{n_x}{n}_y\cdot\Div_y\bigl(\cC\Gradgrad G(x-y)\bigr) \ds_y
   + \int_\Gamma \partial_{n_x}{t}_y\cdot\bigl(\cC\Gradgrad G(x-y)\bigr){n}_y
                  \partial_{t_y} \trD{v}(y) \ds_y
   \\ &\quad
   + \int_\Gamma \trN{v}(y) \partial_{n_x}{n}_y\cdot\bigl(\cC\Gradgrad G(x-y)\bigr){n}_y \ds_y
   \\ &=
   - \frac 1{2\pi} \int_{\Gamma}
                   \bigl(\trD{v}(y)-\trD{v}(x)\bigr) \partial_{n_x}\partial_{n_y}\log|x-y| \ds_y
   \\ &\quad
   + \frac {1-\nu}{4\pi} \int_{\Gamma}
                         \partial_{n_x} \frac{{t}_y\cdot({x}-{y}) {n}_y\cdot({x}-{y})}{|x-y|^2}
                         \partial_{t_y} \trD{v}(y) \ds_y
   \\ &\quad
   + \int_{\Gamma} \trN{v}(y)
          \Bigl(\frac{1+\nu}{4\pi} \partial_{n_x}\log|x-y|
                + \frac {1-\nu}{4\pi} \partial_{n_x}
                  \frac{({n}_y\cdot ({x}-{y}))^2}{|x-y|^2}
          \Bigr) \ds_y
   \qquad (x\not\in\cN(\Omega)),
\end{split}
\end{align}
where the strongly singular integrals are taken as their Cauchy principal values.
Here,
\begin{align*}
   \sigma(x) := \frac 1{2\pi} \lim_{\epsilon\to 0}
                \frac 1\epsilon \int_{y\in\Omega:\;|y-x|=\epsilon} \ds_y
\end{align*}
is the interior angle at $x\in\Gamma$ divided by $2\pi$.
\end{theorem}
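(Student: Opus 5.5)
We follow the pattern of the proof of Theorem~\ref{thm_BIO_V}: fix $x\in\Gamma$, take $\wilde x\in\Omega$ with $\wilde x\to x$ non-tangentially, and use \eqref{trace_DL}. This identity gives $\K\tr{v}(x)=\lim_{\wilde x\to x}\trGg{\Kpot\tr{v}}+\frac12\tr{v}(x)$, where the interior trace of $\Kpot\tr{v}$ is realized by the limit of $\Kpot\tr{v}(\wilde x)$ and, for the second component with $x\notin\cN(\Omega)$, of $\partial_{n_x}\Kpot\tr{v}(\wilde x)$. For $\wilde x\notin\Gamma$ the kernel $y\mapsto\HG(\wilde x-y)$ is smooth on $\Gamma$, with $\HG:=\cC\Gradgrad G$. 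We may therefore evaluate $\Kpot\tr{v}(\wilde x)=-\dual{\trdD{\HG(\wilde x-\cdot)}}{\tr{v}}$ via \eqref{dual}, with the components \eqref{trdD_comp}. We integrate the term $\partial_t({t}\cdot\HG{n})\,\trD{v}$ by parts along each edge; this cancels exactly the corner terms $[{t}\cdot\HG{n}](z)\trD{v}(z)$, because $\trD{v}$ is continuous. The result is
\[
\Kpot\tr{v}(\wilde x)=-\int_\Gamma \trD{v}\,{n}_y\cdot\Div_y\HG(\wilde x-y)\ds_y+\int_\Gamma {t}_y\cdot\HG(\wilde x-y){n}_y\,\partial_{t_y}\trD{v}\ds_y+\int_\Gamma\trN{v}\,{n}_y\cdot\HG(\wilde x-y){n}_y\ds_y .
\]
The signs are chosen so that they reproduce \eqref{K1_rep}; the sign of $\Div_y$ relative to $\Div_x$ has to be tracked carefully.

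For the first component, the last two kernels are at most logarithmically singular or bounded, by \eqref{calc}: ${t}\cdot\HG{n}$ is a bounded $0$-homogeneous expression, and ${n}\cdot\HG{n}$ is a log plus a bounded term. Since $\partial_t\trD{v}$ and $\trN{v}$ are bounded, dominated convergence passes to the limit $\wilde x\to x$. The first term is the critical one. By \eqref{calc}, ${n}_y\cdot\Div_y\HG(\wilde x-y)=\partial_{n_y}\Delta G$, which is the harmonic double-layer kernel $\frac1{2\pi}\partial_{n_y}\log|\wilde x-y|$ (up to sign). We therefore split $\trD{v}(y)=(\trD{v}(y)-\trD{v}(x))+\trD{v}(x)$. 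The difference part has a kernel bounded by $C|x-y|\cdot|x-y|^{-1}$, hence passes to the limit. For the constant part we use Gauss' identity $\frac1{2\pi}\int_\Gamma\partial_{n_y}\log|\wilde x-y|\ds_y=-1$ for $\wilde x\in\Omega$. We compare this with the boundary value at $x$, obtained by excising $B_\epsilon(x)$: the principal value equals $-(1-\sigma(x))$, a standard computation in which the interior angle $2\pi\sigma(x)$ appears through the arc $\{|y-x|=\epsilon\}\cap\Omega$. Adding the $+\frac12\trD{v}(x)$ from \eqref{trace_DL} then yields the jump term $(\sigma(x)-\frac12)\trD{v}(x)$. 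Finally, we substitute \eqref{calc} to obtain the explicit second form.

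For the second component, fix $x\notin\cN(\Omega)$ and differentiate under the integral in $\wilde x$ along ${n}_x$; this is legitimate for $\wilde x\notin\Gamma$. The first kernel becomes $\partial_{n_x}\partial_{n_y}\log|\wilde x-y|$, which is hypersingular. We subtract $\trD{v}(x)$ for free, since $\int_\Gamma\partial_{n_y}\log|\wilde x-y|\ds_y$ is constant in $\wilde x\in\Omega$ and so its $\partial_{n_x}$-derivative vanishes. After the subtraction, $C^1$-regularity of $\trD{v}$ near $x$ reduces the kernel to a Cauchy-type singularity $O(|x-y|^{-1})$. The remaining kernels, $\partial_{n_x}$ of $0$-homogeneous bounded functions and $\partial_{n_x}\log$, are also $O(|x-y|^{-1})$. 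The main obstacle is to justify passage to the limit $\wilde x\to x$ in these strongly singular integrals, namely that the limits exist as Cauchy principal values and carry no additional free term. We would proceed by cutting out $\Gamma\cap B_\epsilon(x)$ and parametrizing the smooth edge locally as a graph. On the cut-out piece we expand the kernels to leading order, where the curvature contributions are bounded. The leading odd parts in the tangential variable integrate to zero symmetrically. For the $\trN{v}$ term, continuity of $\trN{v}$ and the odd parity of $\partial_{n_x}\log|\wilde x-y|$ (its near-field limit is a $\delta$-type contribution $\pm\frac12$ only for the $\frac1{2\pi}$-scaled normal derivative) must be checked carefully. The $\pi$-jump contributions of the $\log$ and $({n}_y\cdot(x-y))^2/|x-y|^2$ parts should cancel exactly, because $\frac{1+\nu}{4\pi}$ and $\frac{1-\nu}{4\pi}$ combine with the corresponding jump of the Hessian term; verifying this cancellation is where we expect most of the work to lie.
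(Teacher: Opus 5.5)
Your first component is essentially the paper's argument: reduce to \eqref{pre_dual}, pass to the limit in the weakly singular kernels, and use the jump of the harmonic double-layer potential. There is one sign slip: for $\wilde x\in\Omega$ one has $\frac1{2\pi}\int_\Gamma\partial_{n_y}\log|\wilde x-y|\ds_y=+1$, and the principal value at $x$ is $\sigma(x)$. Your free term $(\sigma(x)-\frac12)\trD{v}(x)$ nevertheless comes out right.

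\textbf{The gap is in the second component, where you aim at a false intermediate claim.} By \eqref{trace_DL}, $\Kt\tr{v}(x)=\lim_{\wilde x\to x}\partial_{n_x}\Kpot\tr{v}(\wilde x)+\frac12\trN{v}(x)$. Since \eqref{K2_rep} has no free term, the interior limit must equal the principal-value expression \emph{minus} $\frac12\trN{v}(x)$. If, as you expect, the limit carried no free term and the near-field contributions of the two $\trN{v}$-kernels cancelled, you would get an extra $+\frac12\trN{v}(x)$ in $\Kt\tr{v}(x)$, contradicting the statement.

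They do not cancel. These kernels are not odd Cauchy kernels: on a smooth edge they are in fact bounded near $y=x$. For $\wilde x\notin\Gamma$, however, they behave like the adjoint double-layer kernel of the Laplacian, and both jump in the same direction. In the flat model ($\Geps{x}\subset\R\times\{0\}$, $\Omega$ above it, ${n}=(0,-1)$, $\wilde x=x-\delta{n}_x$) one finds, as $\delta\to 0$,
\[
   \int_{\Geps{x}}\partial_{n_x}\log|\wilde x-y|\ds_y\to-\pi,
   \qquad
   \int_{\Geps{x}}\partial_{n_x}\frac{({n}_y\cdot(\wilde x-y))^2}{|\wilde x-y|^2}\ds_y\to-\pi .
\]
So the contributions to $\partial_{n_x}\Kpot\tr{v}(\wilde x)$ are $-\frac{1+\nu}{4}\trN{v}(x)$ and $-\frac{1-\nu}{4}\trN{v}(x)$. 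They add up to the $\nu$-independent value $-\frac12\trN{v}(x)$. Only the $\trD{v}$ and $\partial_t\trD{v}$ parts vanish by odd symmetry, as you say.

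Establishing exactly this value is the core of the paper's proof; it is the limit $I_3+I_4\to\frac12\trN{v}(x)$ in the paper's sign convention. It requires splitting off $\trN{v}(y)-\trN{v}(x)$ using continuity of $\trN{v}$, and a perturbation argument for curved edges. The paper evaluates the constant by applying the representation formula on the half-disk $B_\epsilon(x)$ to $w(y)=-y_2$, whose trace is $(0,1)$ on the flat part and for which $\cC\Gradgrad w=0$, together with explicit arc integrals in polar coordinates. The direct computation above would serve equally well. As written, your plan targets the wrong identity and leaves the only nontrivial computation undone.
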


\begin{proof}
As in the proof of Theorem~\ref{thm_BIO_V}, $x\in\Gamma$ is fixed and $\wilde x\in\Omega$.
Recall the abbreviation $\HG:=\cC\Gradgrad G$ and calculations \eqref{calc}.
According to the definitions of $\Kpot$ and $\K$, and relation \eqref{trace_DL},
we have to show that
\begin{align} \label{pf_K_rep}
    \frac 12\tr{v}(x) 
    - \Bigl(\dual{\trdD{\HG(\wilde x-\cdot)}}{\tr{v}},
            \partial_{n_x}\dual{\trdD{\HG(\wilde x-\cdot)}}{\tr{v}}\Bigr)
\end{align}
tends to the claimed representations of $\bigl(\Ko\tr{v}(x),\Kt\tr{v}(x)\bigr)$ as $\wilde x\to x$
($x\not\in\cN(\Omega)$ in case of the second component).

\medskip\noindent {\bf First component of $\K$.}
We use relation \eqref{pre_dual} to rewrite
\begin{align*}
   &-\dual{\trdD{\HG(\wilde x-\cdot}}{\tr{v}}
   = -\dual{{n}\cdot\Div \HG(\wilde x-\cdot)}{v}_\Gamma
   + \dual{\HG(\wilde x-\cdot){n}}{\nabla v}_\Gamma.
\end{align*}
The last term has an integrable kernel so that, as seen previously for $\V$,
\[
   \dual{\HG(\wilde x-\cdot){n}}{\nabla v}_\Gamma
   \to 
   \dual{\HG(x-\cdot){n}}{\nabla v}_\Gamma \quad (\wilde x\to x).
\]
For the other term we use the calculations \eqref{calc} to see that
\begin{align*}
   -\dual{{n}\cdot\Div \HG(\wilde x-\cdot)}{v}_\Gamma
   = -\frac 1{2\pi} \int_\Gamma v(y) \partial_{n_y} \log|\wilde x-y| \ds_y
\end{align*}
is identical to the double-layer potential of the Laplacian, applied to the restriction
of $v$ to $\Gamma$.  We conclude that
\[
   -\dual{{n}\cdot\Div \HG(\wilde x-\cdot)}{v}_\Gamma
   \to
   -\frac 1{2\pi} \int_\Gamma v(y) \partial_{n_y} \log|x-y|\ds_y
   + (\sigma(x)-1) v(x)\quad (\wilde x\to x),
\]
where the singular integral is interpreted as its Cauchy principal value,
cf.~\cite[Lemma~6.11]{Steinbach_08_NAM}.
A combination of the findings with representation \eqref{trace_DL},
the splitting of $\grad v$ into its normal and tangential components,
and the relations for the tangential-normal and normal-normal traces of $\HG$
in \eqref{calc} show that
\begin{align*}
   \Ko\tr{v}(x)
   &= -\frac 1{2\pi} \int_\Gamma v(y) \partial_{n_y} \log|x-y|\ds_y
   + \bigl(\sigma(x)-\frac 12\bigr) v(x)
   + \int_\Gamma \HG(x-y){n}_y \cdot\nabla_y v(y) \ds_y\\
   \\&= 
     -\frac 1{2\pi}
      \int_\Gamma \trD{v}(y) \partial_{n_y}\log|x-y| \ds_y
     + \frac {1-\nu}{4\pi}
      \int_\Gamma \frac{{t}_y\cdot({x}-{y})\, {n}_y\cdot({x}-{y})}{|x-y|^2}
                  \partial_{t_y} \trD{v}(y) \ds_y
   \\ &\quad
   + \int_\Gamma \Bigl(
                  \frac {1+3\nu}{8\pi} + \frac {1+\nu}{4\pi} \log|x-y|
                + \frac {1-\nu}{4\pi} \frac {\bigl({n}_y\cdot ({x}-{y})\bigr)^2}{|x-y|^2}
                 \Bigr) \trN{v}(y) \ds_y
   + \bigl(\sigma(x)-\frac 12\bigr) \trD{v}(x).
\end{align*}
This is the limit of the first component in \eqref{pf_K_rep} and proves \eqref{K1_rep}.

\medskip\noindent {\bf Second component of $\K$.}
It remains to establish the limit of the second component in \eqref{pf_K_rep}.
We fix $x\in\Gamma$ with $x\not\in\cN(\Omega)$. For
$\Geps{x}:=\{y\in\Gamma;\; |x-y|<\epsilon\}$ ($\epsilon>0$), we define
\begin{align*}
   \Kteps\tr{v}(x)
   &:=
   \frac {1+\nu}{4\pi}
   \int_{\Gamma\setminus\Geps{x}} \trN{v}(y) \partial_{n_x}\log|x-y| \ds_y
   + \frac {1-\nu}{4\pi}
     \int_{\Gamma\setminus\Geps{x}}
          \trN{v}(y) \partial_{n_x} \frac{({n}_y\cdot ({x}-{y}))^2}{|x-y|^2} \ds_y
   \\ &\quad
   - \frac 1{2\pi}
     \int_{\Gamma\setminus\Geps{x}} \bigl(\trD{v}(y)-\trD{v}(x)\bigr)
                                    \partial_{n_x} \partial_{n_y}\log|x-y| \ds_y
   \\ &\quad
   + \frac {1-\nu}{4\pi}
     \int_{\Gamma\setminus\Geps{x}}
     \partial_{n_x}\frac{{t}_y\cdot({x}-{y})\, {n}_y\cdot({x}-{y})}{|x-y|^2} \partial_{t_y}\trD{v}(y) \ds_y
\end{align*}
and note that $\Kteps\tr{v}(x)$ tends to the claimed representation of 
$\Kt\tr{v}(x)$ as $\epsilon\to 0$.
For the limit of the second component in \eqref{pf_K_rep} we therefore have to show that
\begin{align} \label{pf_Kc}
   \partial_{n_x} \dual{\trdD{\HG(\wilde x-\cdot)}}{\tr{v}}
   + \Kteps\tr{v}(x)
   \to \frac 12 \trN{v}(x) \quad (\wilde x\to x, \epsilon\to 0).
\end{align}
The representation $\Kpot\trGg{1}=-1$ in $\Omega$ by Lemma~\ref{la_rep} implies
\[
   \grad \int_\Gamma \partial_{n_y}\log|\wilde x-y|\ds_y = 0,
\]
cf.~\cite[(6.17)]{Steinbach_08_NAM}).
Therefore, as before, making use of relations \eqref{pre_dual} and \eqref{calc},
and splitting the gradient $\grad v$ into its tangential and normal components,
\begin{align} \label{pf_Ka}
   &\partial_{n_x}\dual{\trdD{\HG(\wilde x-\cdot)}}{\tr{v}}
   =
   \partial_{n_x}
   \int_\Gamma
      {n}_y\cdot \Div{\HG(\wilde x-y)} v(y)
      -\HG(\wilde x-y){n}_y \cdot\grad_y v(y)
   \ds_y
   \nonumber\\
   &=
     \frac 1{2\pi} \int_\Gamma \bigl(\trD{v}(y)-\trD{v}(x)\bigr)
                   \partial_{n_x} \partial_{n_y}\log|\wilde x-y| \ds_y
   - \frac {1-\nu}{4\pi}
     \int_\Gamma  \partial_{n_x}
        \frac{{t}_y\cdot(\wilde {x}-{y})\, {n}_y\cdot(\wilde {x}-{y})}{|\wilde x-y|^2}
        \partial_{t_y} \trD{v}(y)\ds_y \nonumber\\
   &\quad
   - \frac {1+\nu}{4\pi}
     \int_\Gamma \trN{v}(y) \partial_{n_x} \log|\wilde x-y| \ds_y
   - \frac {1-\nu}{4\pi}
     \int_\Gamma
     \trN{v}(y) \partial_{n_x}
     \frac{({n}_y\cdot (\wilde {x}-{y}))^2}{|\wilde x-y|^2} \ds_y.
\end{align}
We conclude that
\begin{align}
   &\partial_{n_x} \dual{\trdD{\HG(\wilde x-\cdot)}}{\tr{v}}
   + \partial_{n_x} \Keps\tr{v}(x)
   \nonumber \\
   \begin{split} \label{pf_Kd}
   &= \frac 1{2\pi} \int_{\Gamma\setminus\Geps{x}}
         \bigl(\trD{v}(y)-\trD{v}(x)\bigr)
         \partial_{n_x}\partial_{n_y}\bigl(\log|\wilde x-y|-\log|x-y|\bigr) \ds_y
   \\ &\quad
   - \frac {1-\nu}{4\pi}
     \int_{\Gamma\setminus\Geps{x}} \partial_{n_x}
        \Bigl(\frac{{t}_y\cdot(\wilde {x}-{y})\, {n}_y\cdot(\wilde {x}-{y})}{|\wilde x-y|^2}
             -\frac{{t}_y\cdot({x}-{y})\, {n}_y\cdot({x}-{y})}{|x-y|^2}\Bigr)
        \partial_{t_y} \trD{v}(y)
     \ds_y
   \\ &\quad
   - \frac {1+\nu}{4\pi} \int_{\Gamma\setminus\Geps{x}} \trN{v}(y)
        \partial_{n_x}\bigl(\log|\wilde x-y| - \log|x-y|\bigr) \ds_y
   \\ &\quad
   - \frac {1-\nu}{4\pi} \int_{\Gamma\setminus\Geps{x}} \trN{v}(y) \partial_{n_x}
        \Bigl( \frac{({n}_y\cdot (\wilde {x}-{y}))^2}{|\wilde x-y|^2}
              -\frac{({n}_y\cdot ({x}-{y}))^2}{|x-y|^2}
        \Bigr) \ds_y
   \\ &\quad
   + \frac 1{2\pi} \int_\Geps{x} \bigl(\trD{v}(y)-\trD{v}(x)\bigr)
                   \partial_{n_x}\partial_{n_y}\log|\wilde x-y| \ds_y
   \\ &\quad
   - \frac {1-\nu}{4\pi}
     \int_\Geps{x}
        \partial_{n_x} \frac{{t}_y\cdot(\wilde {x}-{y})\, {n}_y\cdot(\wilde {x}-{y})}
                            {|\wilde x-y|^2}
        \partial_{t_y} \trD{v}(y)\ds_y
   \\ &\quad
   - \frac {1+\nu}{4\pi} \int_\Geps{x} \trN{v}(y) \partial_{n_x} \log|\wilde x-y| \ds_y
   - \frac {1-\nu}{4\pi}
     \int_\Geps{x} \trN{v}(y)
                   \partial_{n_x} \frac{({n}_y\cdot (\wilde {x}-{y}))^2}{|\wilde x-y|^2}
                   \ds_y.
   \end{split}
\end{align}
By \eqref{pf_Kc}, we have to show that these terms tend to $\trN{v}(x)/2$ when $\wilde x\to x$
and $\epsilon\to 0$.

The first four integrals tend to zero for $\wilde x\to x$,
and it remains to determine the limits of the last four integrals,
denoted as $I_j(\wilde x,\epsilon)$ ($j=1,\ldots,4$),
when $\wilde x\to x$ and $\epsilon\to 0$.

\noindent{\bf Limit of $I_1(\wilde x,\epsilon)$.}
We find that
\begin{align*}
   I_1(\wilde x,\epsilon)
   &=
   \frac 1{2\pi}
   \int_\Geps{x} \bigl(\trD{v}(y)-\trD{v}(x)\bigr) \partial_{n_x}\partial_{n_y}\log|\wilde x-y|\ds_y\\
   &\to
   \frac 1{2\pi}
   \int_\Geps{x}
   \bigl(\trD{v}(y)-\trD{v}(x)\bigr) \partial_{n_x}\partial_{n_y}\log|x-y|\ds_y
   \quad (\wilde x\to x)
\end{align*}
with the integral interpreted as its Cauchy principal value
(without the correction $\trD{v}(x)$, it is a hypersingular integral that
can be defined by its Hadamard finite part), see~\cite[Section~6.5]{Steinbach_08_NAM}.
Therefore,
$\lim_{\wilde x\to x} I_1(\wilde x,\epsilon) \to 0$ for $\epsilon\to 0$.

\noindent{\bf Limit of $I_2(\wilde x,\epsilon)$.}
We add and subtract $\partial_{t_x}\trD{v}(x)$ to write
\begin{align*}
   I_2(\wilde x,\epsilon)
   &=
   - \frac {1-\nu}{4\pi}
     \int_\Geps{x}
        \partial_{n_x} \frac{{t}_y\cdot(\wilde {x}-{y})\, {n}_y\cdot(\wilde {x}-{y})}
                            {|\wilde x-y|^2}
        \bigl(\partial_{t_y} \trD{v}(y) - \partial_{t_x}\trD{v}(x)\bigr)\ds_y\\
   &\quad
   - \frac {1-\nu}{4\pi} \partial_t\trD{v}(x)
     \int_\Geps{x}
        \partial_{n_x} \frac{{t}_y\cdot(\wilde {x}-{y})\, {n}_y\cdot(\wilde {x}-{y})}
                            {|\wilde x-y|^2} \ds_y.
\end{align*}
The first integral on the right-hand side has a bounded integrand
for $\wilde x\to x$ and, thus, tends to zero when $\wilde x\to x$ and $\epsilon\to 0$.
Direct calculations show that the second integral tends to zero when $\wilde x\to x$
so that a combination yields
$I_2(\wilde x,\epsilon)\to 0$ when $\wilde x\to x$ and $\epsilon\to 0$.

\noindent{\bf Limit of $I_3(\wilde x,\epsilon)+I_4(\wilde x,\epsilon)$.}
We determine the limits of $I_3$ and $I_4$ as their sum. To this end,
we add and subtract $\trN{v}(x)$ in the arguments to rewrite
\begin{align} \label{pf_K_I34}
\begin{split}
   I_3(\wilde x,\epsilon) + & I_4(\wilde x,\epsilon)
   =
   - \frac {1+\nu}{4\pi} \int_\Geps{x} \partial_{n_x} \log|\wilde x-y| (\trN{v}(y)-\trN{v}(x)) \ds_y
   \\ &
   - \frac {1-\nu}{4\pi}
     \int_\Geps{x} \partial_{n_x} \frac{({n}_y\cdot (\wilde {x}-{y}))^2}{|\wilde x-y|^2}
                   (\trN{v}(y)-\trN{v}(x)) \ds_y
   \\ &
   - \frac {1+\nu}{4\pi} \trN{v}(x) \int_\Geps{x} \partial_{n_x} \log|\wilde x-y| \ds_y
   - \frac {1-\nu}{4\pi} \trN{v}(x)
     \int_\Geps{x} \partial_{n_x} \frac{({n}_y\cdot (\wilde {x}-{y}))^2}{|\wilde x-y|^2} \ds_y.
\end{split}
\end{align}
By the edge-wise continuity of $\trN{v}$, the first two integrals on the right-hand side tend
to integrals with bounded integrands on $\Geps{x}$ for $\wilde x\to x$,
whose limits vanish for $\epsilon\to 0$.
To evaluate the latter two integrals, we assume without loss of generality that
$B_\epsilon(x):=\{\wilde x\in\Omega;\;|\wilde x-x|<\epsilon\}$ is a semicircle
and $\Geps{x}$ a straight line. The general non-polygonal case then follows from
a perturbation argument. Without loss of generality, we further assume that
$B_\epsilon(x)$ lies in the upper half-plane and $\Geps{x}\subset \R\times\{0\}$.
In what follows, we use that the linear polynomial $w(y):=-y_2$
has the trace $\tr{w}=(w,\partial_n w)=(0,1)$ on $\Geps{x}$
and its tensor $\cC\Gradgrad w$ vanishes.
(In the general non-polygonal case, one still has $w(y)=\mathcal{O}(|x-y|^2)$ and
$\partial_n w(y)=1 + \mathcal{O}(|x-y|)$ for $y\in\Geps{x}$.)
We abuse the notation of $\Kpot$ as the double-layer potential on the domain
$B_\epsilon(x)$, and note that the representation formula from Lemma~\ref{la_rep}
for $B_\epsilon(x)$ implies
\begin{align*}
   &1=\partial_{n_x} w(\wilde x)=-\partial_{n_x}\Kpot\trGg{w}(\wilde x)
   =\partial_{n_x} \dual{\trdD{\HG(\wilde x-\cdot))}}{\tr{w}}.
\end{align*}
We calculate this term as before in \eqref{pf_Ka}, though now using \eqref{dual} instead
of \eqref{pre_dual},
\begin{align*}
   &\partial_{n_x} \dual{\trdD{\HG(\wilde x-\cdot))}}{\tr{w}}\\
   &=
     \frac 1{2\pi}
     \int_{\partial B_\epsilon(x)} w(y) \partial_{n_x} \partial_{n_y}\log|\wilde x-y| \ds_y
   + \frac {1-\nu}{4\pi}
     \int_{\partial B_\epsilon(x)} w(y) \partial_{n_x}
     \partial_{t_y} \frac{{t}_y\cdot(\wilde {x}-{y})\, {n}_y\cdot(\wilde {x}-{y})}
                         {|\wilde x-y|^2} \ds_y\\
   &\quad
   - \frac {1+\nu}{4\pi}
     \int_{\partial B_\epsilon(x)} \partial_{n_x} \log|\wilde x-y| \partial_{n_y} w(y) \ds_y
   - \frac {1-\nu}{4\pi}
     \int_{\partial B_\epsilon(x)} \partial_{n_x}
     \frac{({n}_y\cdot (\wilde {x}-{y}))^2}{|\wilde x-y|^2} \partial_{n_y} w(y) \ds_y,
\end{align*}
where the tangential-normal jump terms of $\HG(\wilde x-\cdot)$ at the corners of
$B_\epsilon(x)$ disappear since $w=0$ there.
(In the general non-polygonal case, these jumps vanish for $\epsilon\to 0$.)
We decompose $\partial B_\epsilon(x)$ into $\Geps{x}$ and its complement,
and recall $(w,\partial_n w)|_{\Geps{x}}=(0,1)$. The previous two relations show that,
for $\wilde x\to x$, \begin{align} \label{pf_Kb}
     &-\frac {1+\nu}{4\pi} \int_\Geps{x} \partial_{n_x} \log|\wilde x-y| \ds_y
   - \frac {1-\nu}{4\pi}
     \int_\Geps{x} \partial_{n_x} \frac{({n}_y\cdot (\wilde {x}-{y}))^2}{|\wilde x-y|^2} \ds_y
   \nonumber\\
   &\to 1
   +\frac {1+\nu}{4\pi}
      \int_{\Omega\cap\partial B_\epsilon(x)}
         \partial_{n_x}\log|x-y| \partial_{n_y} w(y) \ds_y
   + \frac {1-\nu}{4\pi}
     \int_{\Omega\cap\partial B_\epsilon(x)} \partial_{n_x}
         \frac{({n}_y\cdot ({x}-{y}))^2}{|x-y|^2} \partial_{n_y} w(y) \ds_y
   \nonumber\\ &\quad
   - \frac 1{2\pi}
      \int_{\Omega\cap\partial B_\epsilon(x)} w(y)
         \partial_{n_x}\partial_{n_y}\log|x-y| \ds_y
   \nonumber\\ &\quad
   - \frac {1-\nu}{4\pi}
     \int_{\Omega\cap\partial B_\epsilon(x)} w(y) \partial_{n_x}
     \partial_{t_y} \frac{{t}_y\cdot({x}-{y})\, {n}_y\cdot({x}-{y})}
                         {|x-y|^2} \ds_y.
\end{align}
By using polar coordinates $(r,\theta)$ centered at $x$, some lengthy calculations reveal
\begin{align*}
   & \int_{\Omega\cap\partial B_\epsilon(x)}
         \partial_{n_x}\log|x-y| \partial_{n_y} w(y) \ds_y
   = -\frac \pi 2,
   &&\hspace{-0.5em}
   \int_{\Omega\cap\partial B_\epsilon(x)} \partial_{n_x}
       \frac{({n}_y\cdot ({x}-{y}))^2}{|x-y|^2} \partial_{n_y} w(y) \ds_y = 0,\\
   & \int_{\Omega\cap\partial B_\epsilon(x)} w(y) \partial_{n_x}\partial_{n_y}\log|x-y| \ds_y
   = \frac \pi 2,
   &&\hspace{-0.5em}
     \int_{\Omega\cap\partial B_\epsilon(x)} w(y) \partial_{n_x}
     \partial_{t_y} \frac{{t}_y\cdot({x}-{y})\, {n}_y\cdot({x}-{y})}
                         {|x-y|^2} \ds_y
   = \frac \pi 2.
\end{align*}
Therefore, returning to \eqref{pf_Kb}, we conclude that
\begin{align*}
     &-\frac \nu{2\pi} \int_\Geps{x} \partial_{n_x} \log|\wilde x-y| \ds_y
   - \frac {1-\nu}{4\pi}
     \int_\Geps{x} \partial_{n_x} \frac{({n}_y\cdot (\wilde {x}-{y}))^2}{|\wilde x-y|^2} \ds_y
   \to 1 - \frac{1+\nu}8 -\frac 14 -\frac{1-\nu}8 = \frac 12
\end{align*}
as $\wilde x\to x$ and $\epsilon\to 0$, that is, recalling \eqref{pf_K_I34},
\[
   I_3(\wilde x,\epsilon) + I_4(\wilde x,\epsilon)
   \to \frac 12\trN{v}(x)\quad (\wilde x\to x, \epsilon\to 0).
\]
This shows that \eqref{pf_Kd} tends to $\trN{v}(x)/2$, which was left to prove the theorem.
\end{proof}

\section{Boundary element method} \label{sec_BEM}

\subsection{Variational formulation} \label{sec_BEM_VF}

Given the Dirichlet data $\tr{g}=(\trD{g},\trN{g}) \in H^{3/2,1/2}(\Gamma)$, we solve the boundary integral equation \eqref{BIE} with $\tr{u} = \tr{g}$.
By \cite[Theorem~5.2]{CostabelD_96_IBS}, the single-layer operator
$\V:\;H^{-3/2,-1/2}(\Gamma)\to H^{3/2,1/2}(\Gamma)$
is an isomorphism if the domain $\Omega$ is appropriately scaled. 
More precisely, there are only four scaling parameters for which this is not the case. 
Let us therefore assume that $\Omega$ is appropriately scaled, that is, $\V$ is an isomorphism. 
It is known that $\V$ is positive definite on a closed subspace, see
Proposition~\ref{prop_Vposdef} below.
Since the solution $u$ of \eqref{model} gives rise to a solution
$\tr{m}=\trdD{\bM}$ of \eqref{BIE} for $\bM:=-\cC\Gradgrad u$ and $\V$ is an isomorphism,
the two problems are equivalent, and the representation formula of Lemma~\ref{la_rep} implies that
$u=\Vpot\tr{m}-\Kpot\tr{u}$ solves \eqref{model}.

Let $P_1 = P_1(\Omega)$ be the space of polynomials of degree at most one and 
\[
   H^{-3/2,-1/2}_*(\Gamma)
   :=\{\tr{q}\in H^{-3/2,-1/2}(\Gamma);\; \dual{\tr{q}}{\trGg{v}}=0\ \forall v\in P_1\}.
\]

\begin{prop}[{\cite[Proposition~4.5]{CostabelD_96_IBS}}]\label{prop_Vposdef}
  There is a constant $\alpha>0$ such that
  \[
   \dual{\V\tr{q}}{\tr{q}} \ge \alpha \|\tr{q}\|_{-3/2,-1/2,\Gamma}^2
   \quad\forall \tr{q}\in H^{-3/2,-1/2}_*(\Gamma).
  \]
\end{prop}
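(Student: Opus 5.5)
The plan is the classical energy argument for single-layer operators: express the duality $\dual{\V\tr{q}}{\tr{q}}$ as an energy integral of the single-layer potential over $\R^2$, and use the side condition defining $H^{-3/2,-1/2}_*(\Gamma)$ to control the growth of the potential at infinity.

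\textbf{Step 1 (potential and its jumps).} Let $\tr{q}\in H^{-3/2,-1/2}_*(\Gamma)$ and set $w:=\Vpot\tr{q}$. By Lemma~\ref{la_rep}, $w\in H^2_\mathrm{loc}(\R^2)$, and $\Delta^2 w=0$ in $\Omega\cup\Omega^e$. By Lemma~\ref{la_jump}, $\jGg{w}=0$. In addition, I would show the complementary jump relation
\[
   \jdD{-\cC\Gradgrad w}=\tr{q}
   \quad\text{(up to the paper's sign convention)}.
\]
This follows by testing the distributional identity $\Delta^2 w=\tr{q}\circ\trGgwo$ on $\R^2$ with $v\in C_0^\infty(\R^2)$. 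The identity itself comes from $\Delta^2_x G(x-\cdot)=\delta$ and the definition of $\Vpot$.

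\textbf{Step 2 (Green's identity on both sides).} Apply Green's first identity, i.e.\ the defining formula of $\trdDwo$ with $\bQ=\cC\Gradgrad w$ and test function $w$, in $\Omega$ and in $\Omega^e\cap B_R$, and add the two. Since $\Delta^2w=0$ off $\Gamma$, this yields
\[
   \dual{\V\tr{q}}{\tr{q}}=\vdual{\cC\Gradgrad w}{\Gradgrad w}_{B_R}+\text{(boundary terms on }\partial B_R).
\]

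\textbf{Step 3 (behaviour at infinity).} Here the condition $\dual{\tr{q}}{\trGg{p}}=0$ for all $p\in P_1$ enters. Taylor-expand $G(x-y)$ in $y$ about $0$ for large $|x|$. The terms of order $|x|^2\log|x|$ and $|x|\log|x|$ carry the factors $\dual{\tr{q}}{\trGg{1}}$ and $\dual{\tr{q}}{\trGg{y_i}}$, so they vanish. The first surviving term is quadratic in $y$ and has the form
\[
   \log|x|\, c_{ij}+O(1),
\]
so $w$ grows at most like $\log|x|$. Its second derivatives decay like $|x|^{-2}$ and its third derivatives like $|x|^{-3}$. With these rates the boundary terms on $\partial B_R$, which are of size $R\cdot(\log R\cdot R^{-3}+R^{-1}\cdot R^{-2})$, tend to zero as $R\to\infty$. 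Hence
\[
   \dual{\V\tr{q}}{\tr{q}}=\vdual{\cC\Gradgrad w}{\Gradgrad w}_{\R^2}\ge c\,\|\Gradgrad w\|_{\R^2}^2,
\]
using that $\cC$ is positive definite for $-1<\nu<1$.

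\textbf{Step 4 (coercivity from the energy).} Choose a ball $B\supset\overline\Omega$. Lemma~\ref{lem_duality} gives $\|\tr{q}\|_{-3/2,-1/2,\Gamma}=\sup_v \dual{\tr{q}}{v}/\|v\|_{2,\Omega}$. Since $\tr{q}$ annihilates $P_1$, we may replace $v$ by $v-p$ for any $p\in P_1$. A Bramble--Hilbert/Poincaré argument on $B$ then reduces the norm to $\|\Gradgrad v\|$. Next write $\dual{\tr{q}}{v}$ as the jump term $\dual{\jdD{\cC\Gradgrad w}}{v}$, and convert this by Green's identity on $B\setminus\Gamma$ into volume integrals $\vdual{\cC\Gradgrad w}{\Gradgrad(\varphi v)}$. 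Here $\varphi$ is a cutoff, and the leftover terms on $\partial B$ are controlled through interior elliptic estimates by $\|\Gradgrad w\|_{\R^2}$. This gives
\[
   \|\tr{q}\|_{-3/2,-1/2,\Gamma}\lesssim\|\Gradgrad w\|_{\R^2},
\]
which proves the proposition.

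\textbf{Main obstacle.} I expect Step 3 to be the hardest. One must verify that the $P_1$-orthogonality kills exactly the growing terms of the expansion, and the duality expressions must be handled rigorously for general $\tr{q}$ rather than smooth densities. For the latter I would use a density argument, starting from smooth $\bQ$ and passing to the limit with the boundedness of $\Vpot$.
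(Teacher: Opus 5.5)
Your proposal is correct and yields a complete, self-contained proof. The paper gives no argument of its own for this proposition; it only imports the result from Costabel--Dauge, so there is no proof in the text to compare against.

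Your route is the classical energy argument, and Step~3 goes through for general $\tr{q}$ without the density argument you anticipate. Pair $\tr{q}$ with $\trGgwo$ of the first-order Taylor remainder (in $y$) of $\partial_x^\alpha G(x-\cdot)$. That remainder has $H^2(\Omega)$-norm of the size of $\nabla^{2+|\alpha|}G(x)$. This gives directly $w=O(\log|x|)$, $\grad w=O(|x|^{-1})$, $\Gradgrad w=O(|x|^{-2})$ and third derivatives $O(|x|^{-3})$.

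The other standard route works on the Fourier side. One writes $\dual{\V\tr{q}}{\tr{q}}$ as a multiple of $\int_{\R^2}|\widehat T(\xi)|^2|\xi|^{-4}\,d\xi$ with $T=\tr{q}\circ\trGgwo$, and uses $\widehat T(\xi)=O(|\xi|^2)$ at the origin. That trades your asymptotics at infinity for the behaviour of $\widehat T$ at $\xi=0$, and compares with the $H^{-2}(\R^2)$-norm of $T$.

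Two details need fixing.

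\textbf{The sign in Step~1.} The jump relation is $\jdD{\cC\Gradgrad w}=\tr{q}$, without the minus sign. For $v\in H^2(\R^2)$, use $\divDiv\cC\Gradgrad w=\Delta^2 w=0$ off $\Gamma$, $w\in H^2_\mathrm{loc}(\R^2)$, and $\Delta^2 w=\tr{q}\circ\trGgwo$:
\[
   \dual{\jdD{\cC\Gradgrad w}}{v}
   = \vdual{\cC\Gradgrad w}{\Gradgrad(\varphi v)}_{\R^2}
   = \vdual{\Gradgrad w}{\Gradgrad(\varphi v)}_{\R^2}
   = \dual{\tr{q}}{\trGg{v}} .
\]
The middle step uses $\vdual{\Delta w}{\Delta\psi}_{\R^2}=\vdual{\Gradgrad w}{\Gradgrad\psi}_{\R^2}$ for compactly supported $\psi\in H^2(\R^2)$. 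So nothing depends on $\nu$, as it must, since $\V$ does not involve $\cC$. This sign agrees with Lemma~\ref{la_rep} and is what produces $+\vdual{\cC\Gradgrad w}{\Gradgrad w}$ in Step~2. With your $\jdD{-\cC\Gradgrad w}=\tr{q}$, Step~2 would carry the opposite sign.

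\textbf{Step~4 is much shorter than you make it.} Extend $v\in H^2(\Omega)$ boundedly to $H^2(\R^2)$. The identity above then gives
\[
   |\dual{\tr{q}}{\trGg{v}}|\le\|\Gradgrad w\|_{\R^2}\|\Gradgrad(\varphi v)\|_{\R^2}\lesssim\|\Gradgrad w\|_{\R^2}\|v\|_{2,\Omega},
\]
and Lemma~\ref{lem_duality} yields $\|\tr{q}\|_{-3/2,-1/2,\Gamma}\lesssim\|\Gradgrad w\|_{\R^2}$. No Bramble--Hilbert step is needed. There are no terms on $\partial B$ either, because the cutoff $\varphi$ has compact support. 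The $P_1$-condition enters only in Step~3, to guarantee $\Gradgrad w\in L_2(\R^2)$ and that the terms on $\partial B_R$ vanish.
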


Considering the solution $u$ of \eqref{model} and $\bM:=-\Gradgrad u$,
we observe that $\tr{m}:=\trdD{\bM}$ and any $v\in P_1$ satisfy
\begin{align}\label{eq:compatibility}
  \dual{\tr{m}}{\trGg{v}} = \vdual{\div\Div\bM}{v}_\Omega-\vdual{\bM}{\Gradgrad v}_\Omega=0
\end{align}
because $\div\Div\bM=0$ by \eqref{bilap}, that is, $\tr{m}\in H^{-3/2,-1/2}_*(\Gamma)$.
Therefore, a variational formulation of the boundary integral equation \eqref{BIE} with given
$\tr{u}=\tr{g}\in H^{3/2,1/2}(\Gamma)$ reads:
\emph{Find $\tr{m}\in H^{-3/2,-1/2}(\Gamma)$ and $\lambda\in P_1$ such that}
\begin{subequations} \label{BIE_saddle}
\begin{alignat}{3}
  &\dual{\V\tr{m}}{\tr{q}} + \dual{\tr{q}}{\trGg{\lambda}}
   &&= \dual{\bigl(\K+\frac12\bigr)\tr{g}}{\tr{q}}
   \quad &&\forall\tr{q}\in H^{-3/2,-1/2}(\Gamma),\\
   &\dual{\tr{m}}{\trGg\mu} &&=0 &&\forall\mu\in  P_1.
\end{alignat}
\end{subequations}

\begin{prop} \label{prop_wellposed}
  Problem~\eqref{BIE_saddle} is well posed. 
\end{prop}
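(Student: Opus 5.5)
We treat \eqref{BIE_saddle} as a saddle point problem and apply Brezzi's theory with $X:=H^{-3/2,-1/2}(\Gamma)$ and $M:=P_1$, normed by, e.g., $\|\mu\|_{P_1}:=\|\trGg{\mu}\|_{3/2,1/2,\Gamma}$. Three ingredients are needed: boundedness of all forms and of the right-hand side, coercivity of $\dual{\V\cdot}{\cdot}$ on the kernel of the constraint, and an inf-sup condition for $b(\tr{q},\mu):=\dual{\tr{q}}{\trGg{\mu}}$.

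\textbf{Boundedness and coercivity on the kernel.} Boundedness of $a(\tr{m},\tr{q}):=\dual{\V\tr{m}}{\tr{q}}$ follows from Lemma~\ref{la_rep}: $\Vpot$ maps into $H^2_\mathrm{loc}(\R^2)$, so after restriction to $\Omega$ and application of $\trGgwo$, the operator $\V$ is bounded from $H^{-3/2,-1/2}(\Gamma)$ to $H^{3/2,1/2}(\Gamma)$. Combined with the duality norms of Lemma~\ref{lem_duality}, this gives the bound for $a$. The same argument, using $\Kpot$ and definition \eqref{trace_DL}, shows that the right-hand side $\dual{(\K+\tfrac12)\tr{g}}{\cdot}$ is a bounded functional. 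Boundedness of $b$ is immediate from Lemma~\ref{lem_duality}. The kernel of the constraint is exactly $H^{-3/2,-1/2}_*(\Gamma)$, and on it Proposition~\ref{prop_Vposdef} yields coercivity.

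\textbf{Inf-sup condition.} For the inf-sup condition we must show that, for every $\mu\in P_1$,
\[
\sup_{\tr{q}\in X\setminus\{0\}}\frac{\dual{\tr{q}}{\trGg{\mu}}}{\|\tr{q}\|_{-3/2,-1/2,\Gamma}}\gtrsim\|\mu\|_{P_1}.
\]
By Lemma~\ref{lem_duality}, the left-hand side equals $\|\trGg{\mu}\|_{3/2,1/2,\Gamma}$, so the estimate holds with constant one, provided $\|\cdot\|_{P_1}$ is a norm. This requires injectivity of $\mu\mapsto\trGg{\mu}$ on $P_1$, which holds because $\trGg{\mu}=(\mu|_\Gamma,\partial_n\mu|_\Gamma)=0$ forces the linear polynomial $\mu$ to vanish on the curve $\Gamma$, hence identically. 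Since $P_1$ is finite dimensional, every norm on it is equivalent, so the choice of norm is harmless.

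\textbf{Conclusion and main obstacle.} Brezzi's theorem then gives existence, uniqueness, and stability of the solution. The main delicate step is justifying that $\V$ is bounded, which must come from Lemma~\ref{la_rep}, and correctly identifying the kernel of $b$ with $H^{-3/2,-1/2}_*(\Gamma)$. With $P_1$ normed as above, that identification is a direct consequence of the definition of $H^{-3/2,-1/2}_*(\Gamma)$. We also note consistency: by \eqref{eq:compatibility} the exact trace $\tr{m}$ together with $\lambda=0$ solves \eqref{BIE_saddle}. Well-posedness, however, does not rely on this and holds for any $\tr{g}$.
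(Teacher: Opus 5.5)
Your proof is correct and uses the same Babu\v{s}ka--Brezzi framework as the paper, with coercivity on the kernel $H^{-3/2,-1/2}_*(\Gamma)$ taken from Proposition~\ref{prop_Vposdef}. The only genuine difference is how you verify the inf-sup condition for $\dual{\tr{q}}{\trGg{\mu}}$.

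The paper tests with the explicit functional $\tr{m}=(0,\mu|_\Gamma,0)$, which gives $\dual{\tr{m}}{\trGg{\mu}}=\|\mu\|_\Gamma^2$. It then relies on finite dimensionality of $P_1$: first for the equivalence of $\|\mu\|_\Gamma$ and $\|\mu\|_{2,\Omega}$, and tacitly also for a bound $\|\tr{m}\|_{-3/2,-1/2,\Gamma}\lesssim\|\mu\|_\Gamma$.

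You instead invoke Lemma~\ref{lem_duality}, which identifies the supremum directly with $\|\trGg{\mu}\|_{3/2,1/2,\Gamma}$. After you check that $\trGgwo$ is injective on $P_1$, the inf-sup constant is exactly one in that norm, and no test functional has to be constructed or estimated.

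What the paper's explicit choice buys is reusability. The same test functional gives the discrete inf-sup condition in Theorem~\ref{thm_BEM} for $p>1$, since then $(0,\mu|_\Gamma,0)\in X_h$. Your duality argument does not carry over there, because a supremum over the subspace $X_h$ does not reproduce the full dual norm.
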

\begin{proof}
  Since all bilinear and linear forms of the mixed system~\eqref{BIE_saddle} are bounded, by the Babu\v{s}ka--Brezzi theorem and Proposition~\ref{prop_Vposdef} it suffices to verify the $\inf$--$\sup$ condition
  \begin{align*}
    \inf_{\mu\in P_1\setminus\{0\}} \sup_{\tr{m}\in H^{-3/2,-1/2}(\Gamma)\setminus\{0\}} \frac{\dual{\tr{m}}{\trGg\mu}}{\|\mu\|_{2,\Omega} \|\tr{m}\|_{-3/2,-1/2,\Gamma}}>0.
  \end{align*}
  For given $\mu\in P_1\setminus\{0\}$, we select $\tr{m} := (0,\mu|_\Gamma,0)\in H^{-3/2,-1/2}(\Gamma)$. Then
\(
    \dual{\tr{m}}{\trGg{\mu}} = \|\mu\|_{\Gamma}^2>0,
\)
  and by the equivalence of $\|\cdot\|_{2,\Omega}$ and $\|\cdot\|_{\Gamma}$ in the
  finite-dimensional space $P_1$, we obtain the desired estimate.
\end{proof}

\begin{remark}
Since Proposition~\ref{prop_Vposdef} holds indeed true independently of the scaling of the domain $\Omega$, the same argument shows that \eqref{BIE_saddle} is also well posed even if $\V$ is not an isomorphism. 
Owing to~\eqref{eq:compatibility}, \eqref{model} is thus always equivalent to the mixed system~\eqref{BIE_saddle}. 
\end{remark}

\subsection{Approximation spaces} \label{sec_BEM_hp}
Let $\cE_h$ denote a partition of $\Gamma$ into non-overlapping open segments $E$ with $h_E:=|E|>0$ and such that there exists $\Gamma_E\in\cE(\Omega)$ with $E\subseteq \Gamma_E$. Set $h:=\max_{E\in\cE_h} h_E$.
With $\cN_h$, we denote all vertices of the mesh $\cE_h$.
For every $E\in\cE_h$ there exists a smooth positively oriented path
$\gamma_E\colon [0,1]\to \overline{E}$ with $|\gamma_E'(t)|\eqsim |E|$ for all $t\in [0,1]$.

We assume that they are all bijective (in particular excluding the case of a smooth
closed curve $\Gamma$ with trivial partition).

Let $P_p(0,1)$ denote the space of polynomials of degree at most $p\in\mathbb{N}_0$ over the unit interval and define the spaces
\begin{alignat*}{2}
  P_p(E) &:= \{v\circ \gamma_E^{-1};\, v\in P_p(0,1)\}, \quad  &&P_1'(E) := \{\partial_t(v\circ\gamma_E^{-1});\, v\in P_1(0,1)\}, \\
  P_p(\cE_h) &:= \{v\in L_2(\Gamma);\, v|_E\in P_p(E), \, E\in\cE_h\}, \quad
  &&P_1'(\cE_h) := \{v\in L_2(\Gamma);\, v|_E\in P_1'(E), \, E\in\cE_h\}.
\end{alignat*}
Note that $P_1'(E)$ coincides with $P_0(E)$ if and only if $|\gamma_E'|$ is constant.
In any case, $\dim(P_1'(E)) = \dim(P_0(E))=1$.

To discretize $H^{-3/2,-1/2}(\Gamma)$ for given polynomial degree $p\in\mathbb{N}_0$, we use the space $X_h$ defined by
\begin{align*}
X_h := \begin{cases} P_0(\cE_h)\times \{0\}\times  \R^{\#\cN_h} \quad(p=0), \\
P_1(\cE_h)\times P_1'(\cE_h)\times \R^{\#\cN(\Omega)} \quad(p=1), \\
P_p(\cE_h)\times P_{p-1}(\cE_h) \times \R^{\#\cN(\Omega)} \quad(p>1).
\end{cases}
\end{align*}
In what follows, we write $X_h\subset H^{-3/2,-1/2}(\Gamma)$, which is to be understood as an embedding. To be specific, 
each $(\trnn{q},\trsf{q},\trJ{q})\in X_h$ defines a functional $\ell\in H^{-3/2,-1/2}(\Gamma)$ by
\begin{align*}
  \ell(\tr{v}) := \begin{cases}-\sum_{E\in\cE_h} \dual{\trnn{q}}{\trN{v}}_E+\sum_{z\in\cN_h} \trJ{q}(z)\trD{v}(z) \quad (p=0), \\
  \sum_{E\in\cE_h}(\dual{\trsf{q}}{\trD{v}}_E- \dual{\trnn{q}}{\trN{v}}_E)+\sum_{z\in\cN(\Omega)} \trJ{q}(z)\trD{v}(z) \quad (p>0),
  \end{cases} 
  \bigl(\tr{v}=(\trD{v},\trN{v})\in H^{3/2,1/2}(\Gamma)\bigr).
\end{align*}
With the usual arguments, one proves that $\ell(\tr{v}) = 0$ for all $\tr{v}$ implies that $(\trnn{q},\trsf{q},\trJ{q}) =0$.
The dimensions of these spaces are
\begin{align*}
  \dim(X_h) &= \begin{cases} \#\cE_h+\#\cN_h = 2\#\cE_h \quad (p=0), \\
  (p+1)\#\cE_h+p\#\cE_h+\#\cN(\Omega) = (2p+1)\#\cE_h + \#\cN(\Omega) \quad (p>0).
  \end{cases} 
\end{align*}
\begin{remark}
Note that $\trJ{q}$ with $(0,0,\trJ{q})\in X_h$ are identified as point distributions. 
For $p=0$, the space $X_h$ has been mentioned and used in~\cite{CostabelD_96_IBS} for numerical experiments. Below, we provide an analysis of its approximation properties.
\end{remark}

To discretize $H^{3/2,1/2}(\Gamma)$, we consider the space
\begin{align*}
  Y_h &= \{(\trD{v},\trN{v})\in P_{p+2+\delta_{p0}}(\cE_h)\times P_{p+1}(\cE_h) ;\, \trD{v}, (\partial_t\trD{v},\trN{v})(t\,\vert\,n)\in C^0(\Gamma)^2\},
\end{align*}
cf.~Remark~\ref{rem_Ggrad}(i).
Here and in the following, $\delta_{jk}\in\{0,1\}$ denotes the Kronecker $\delta$ symbol, and 
$(t\,\vert\,n)$ is the matrix formed by the two columns $t$ and $n$.
To these spaces, we associate the following moments,
\begin{subequations}\label{dofsYhp}
\begin{align}
  \trD{v}(z), (\partial_t\trD{v},\trN{v})({t}\,\vert\,{n})(z), &\qquad z\in \cN_h, \label{dofsYhp:a}\\
  \dual{\trN{v}}{\mu}_E, &\qquad \mu\in P_{p-1}(E),  E\in\cE_h, p\geq 1,  \label{dofsYhp:b}\\
  \dual{\trD{v}}{\mu}_E, &\qquad \mu\in P_{p-2}(E),  E\in\cE_h, p\geq 2.  \label{dofsYhp:c}
\end{align}
\end{subequations}

\begin{prop}\label{prop_dofsYhp}
 For any $p\in\mathbb{N}_0$, $Y_h$ is a subspace of $H^{3/2,1/2}(\Gamma)$
  with dimension $(2p+2+\delta_{p0})\#\cE_h$ and degrees of freedom \eqref{dofsYhp}.
\end{prop}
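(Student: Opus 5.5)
The proof has three parts: conformity $Y_h\subset H^{3/2,1/2}(\Gamma)$, the dimension count, and unisolvence of the moments \eqref{dofsYhp}. For conformity we use Lemma~\ref{lem_isomorphic}: it suffices to show $\tnorm{\tr{v}}_{3/2,1/2,\Gamma}<\infty$ for $\tr{v}=(\trD{v},\trN{v})\in Y_h$.

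\textbf{Conformity.} On each $E$ the components are polynomials in the parameter of $\gamma_E$. Since $\gamma_E$ is smooth and bijective with $|\gamma_E'|\eqsim|E|$, they are smooth on $\overline E$. Also $\trD{v}\in C^0(\Gamma)$, so $\trD{v}\in H^1(\Gamma)$. The vector $(\partial_t\trD{v},\trN{v})(t\,\vert\,n)$ is smooth on each closed element and continuous across all mesh nodes, including the corners in $\cN(\Omega)$, where $t$ and $n$ jump. Hence both of its components lie in $H^1(\Gamma)\subset H^{1/2}(\Gamma)$. Here we use that a piecewise $H^1$ function which is globally continuous lies in $H^1$ of the closed curve. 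This shows $\tnorm{\tr v}_{3/2,1/2,\Gamma}<\infty$. By Remark~\ref{rem_Ggrad}(i) the two components are the Cartesian derivatives $\partial_{x_1}v,\partial_{x_2}v$ of a would-be $H^2$ extension, and their continuity is exactly the compatibility condition of \cite{Jakovlev_61_BPF,SchmidtK_99_BIE}.

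\textbf{Dimension count.} Per node $z\in\cN_h$ the moments \eqref{dofsYhp:a} give $3$ values: $\trD{v}(z)$ and the two Cartesian components. Per element there are $p$ moments \eqref{dofsYhp:b} and $\max(p-1,0)$ moments \eqref{dofsYhp:c}. Since $\#\cN_h=\#\cE_h$ on a closed curve, the total is $3+p+(p-1)=2p+2$ per element for $p\ge1$, and $3=2p+2+\delta_{p0}$ per element for $p=0$. This matches the claim, provided the local functionals determine each element independently.

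\textbf{Unisolvence.} We argue element by element. Fix $E$ with endpoints $z_0,z_1$. Vanishing of the nodal values \eqref{dofsYhp:a} gives $\trD{v}(z_j)=0$. Inverting the $2\times 2$ orthogonal matrix $(t\,\vert\,n)$ evaluated on the side of $E$ also gives $\partial_t\trD{v}(z_j)=0$ and $\trN{v}(z_j)=0$. The tangential derivative is $\partial_t=|\gamma_E'|^{-1}\frac{d}{ds}$ in the parameter, and $|\gamma_E'|\neq0$, so the parametric derivative of $\hat v_0:=\trD{v}\circ\gamma_E$ vanishes at both ends. Then $\hat v_0$, of degree $\le p+2+\delta_{p0}$, has double roots at $0$ and $1$. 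This means $\hat v_0=s^2(1-s)^2r$ with $\deg r\le p-2$; for $p=0$ and $p=1$ this forces $\hat v_0=0$.

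For $p\ge2$ the moments \eqref{dofsYhp:c} give $\int_E\trD{v}\,\mu=0$ for all $\mu\in P_{p-2}(E)$. Transforming to $(0,1)$ introduces the smooth positive weight $|\gamma_E'|$. We handle it by choosing $\mu\circ\gamma_E$ to range over $P_{p-2}(0,1)$ and taking the test function $r$. This yields $\int_0^1 s^2(1-s)^2 r^2|\gamma_E'|\,ds=0$, hence $r=0$.

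Similarly $\hat v_n:=\trN{v}\circ\gamma_E$ has degree $\le p+1$ and vanishes at both ends, so $\hat v_n=s(1-s)r_n$ with $\deg r_n\le p-1$. Testing \eqref{dofsYhp:b} with $\mu=r_n\circ\gamma_E^{-1}$ gives $\hat v_n=0$; for $p=0$ the endpoint conditions already force $\hat v_n=0$.

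Thus the functionals are linearly independent on the local spaces. The main obstacle is the converse direction, namely that arbitrary values of the moments are attained by a globally conforming function. For this we count dimensions. The local space on $E$ has dimension $(p+3+\delta_{p0})+(p+2)$, and the continuity constraints remove $3$ per node, giving $\dim Y_h\ge(2p+5+\delta_{p0})\#\cE_h-3\#\cN_h=(2p+2+\delta_{p0})\#\cE_h$. Injectivity of the moment map gives the reverse inequality, so equality holds and the moments form a unisolvent set.
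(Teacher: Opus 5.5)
Your proof is correct and follows the same route as the paper. Conformity comes from edgewise smoothness plus nodal continuity and Lemma~\ref{lem_isomorphic}, the dimension comes from counting three constraints per vertex, and unisolvence comes from an element-by-element endpoint-root and interior-moment argument. You are slightly more careful than the paper in two places: you get equality of dimensions by pairing the lower bound from constraint counting with the upper bound from injectivity of the moment map, and you explicitly handle the weight $|\gamma_E'|$ in the interior moments.
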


\begin{proof}
  Let $(\trD{v},\trN{v})\in P_{p+2+\delta_{p0}}(\cE_h)\times P_{p+1}(\cE_h)$ be given. 
  If $\trD{v}$ is continuous at every $z\in\cN_h$ then $\trD{v}\in H^1(\Gamma)$. 
  Note that $(\partial_t \trD{v}|_E,\trN{v}|_E)^\top\cdot{t}|_E$ and
  $(\partial_t \trD{v}|_E,\trN{v}|_E)^\top\cdot{n}|_E$ are smooth on $E\in\cE_h$.
  Therefore, if $(\partial_t \trD{v},\trN{v})(t\,\vert\,n)$ is continuous at every $z\in\cN_h$,
  then $(\partial_t \trD{v},\trN{v})(t\,\vert\,n) \in H^1(\Gamma)^2$ and
  Lemma~\ref{lem_isomorphic} show that $Y_h\subset H^{3/2,1/2}(\Gamma)$. 

  Note that $\dim(P_{p+2+\delta_{p0}}(\cE_h)\times P_{p+1}(\cE_h)) = (2p+5+\delta_{p0})\#\cE_h$.
  By the continuity requirements (three conditions per vertex) it follows that
  $\dim(Y_h) = (2p+2+\delta_{p0})\#\cE_h$.
  This is equal to the number of moments~\eqref{dofsYhp},
   namely $3\#\cN_h + (2p-1+\delta_{p0})\#\cE_h$.

  Let $(\trD{v},\trN{v})\in Y_h$ and suppose that all the moments~\eqref{dofsYhp} vanish. Let $E\in\cE_h$. 
  We stress that $\trD{v}(z) = 0$, $(\partial_t\trD{v},\trN{v})^\top\cdot{t}(z)=0=(\partial_t\trD{v},\trN{v})^\top\cdot{n}(z)$ imply 
  that $\trD{v}(z) = 0$ as well as $\partial_t \trD{v}(z) = 0 = \trN{v}(z)$ for all $z\in\partial E$.
  Note that $v_n|_E\in P_{p+1}(E)$. So, $\trN{v}(z) =0$ for all $z\in\partial E$ and $\dual{\trN{v}}{\mu}_E = 0$ for all $\mu\in P_{p-1}(E)$ imply that $\trN{v}|_E = 0$.
  A similar argumentation shows that $\trD{v}|_E = 0$. Since $E\in\cE_h$ was arbitrary, we conclude that if all moments~\eqref{dofsYhp} vanish, then $(\trD{v},\trN{v}) = 0$.
\end{proof}

The canonical interpolation operator which maps into $Y_h$ and interpolates in~\eqref{dofsYhp} is denoted by $I_h$. Note that $I_h(\trD{v},\trN{v})$ is well defined if $v_0, (\partial_t\trD{v},\trN{v})^\top\cdot{t}, (\partial_t\trD{v},\trN{v})^\top\cdot{n}\in C^0(\Gamma)$.
Below, we study its approximation properties.

\begin{remark}
  Note that if $\Gamma$ is polygonal, then $Y_h$ for ($p=0$) $p=1$ is the trace space of the (reduced) Hsieh--Clough--Tocher element~\cite{Ciarlet_78}.
  Further, note that for $p=0$ each $(\trD{v},\trN{v})\in Y_h$ satisfies $\trD{v}|_E \in P_3(E)$ but $\trN{v}|_E\in P_1(E)$ for $E\in\cE_h$. 
  This ``imbalance'' in the polynomial orders is required to ensure the continuity constraints in the space. 
\end{remark}

\subsection{Approximation estimates in $H^{3/2,1/2}(\Gamma)$} \label{sec_BEM_est_dir}
Before we discuss the approximation properties of $I_h$, we consider the following local version.
\begin{lemma}\label{lem_NodIntLocal}
  Let $p\in\mathbb{N}_0$, $E\in\cE_h$. 
  Given $(\trD{v},\trN{v})\in C^1(\overline E)\times C^0(\overline E)$,
  define $(\trD{w},\trN{w})\in P_{p+2+\delta_{p0}}(E)\times P_{p+1}(E)$ by
  \begin{align*}
    \trD{w}(z) = \trD{v}(z), \quad (\partial_t \trD{w},\trN{w})({t} \,\vert\, {n})(z) = (\partial_t \trD{v},\trN{v})({t}\,\vert\, {n})(z) &\quad \forall z\in \partial E, \\
    \dual{\trN{w}}\mu_E = \dual{\trN{v}}\mu_E &\quad \forall \mu\in P_{p-1}(E) \text{ if }p\geq 1, \\
    \dual{\trD{w}}\mu_E = \dual{\trD{v}}\mu_E &\quad \forall \mu\in P_{p-2}(E) \text{ if }p\geq 2.
  \end{align*}
  Then,  for $2\leq s\leq p+3+\delta_{p0}$,
  \begin{align*}
    \|\trD{v}-\trD{w}\|_E + h_E \|\partial_t(\trD{v}-\trD{w})\|_E + h_E^2 \|\partial_t^2(\trD{v}-\trD{w})\|_E &\lesssim h_E^{s}\|\trD{v}\|_{s,E} \quad\forall \trD{v}\in H^s(E),
\intertext{and, for $1\leq s\leq p+2$,}
    \|\trN{v}-\trN{w}\|_E + h_E\|\partial_t(\trN{v}-\trN{w})\|_E &\lesssim h_E^s\|\trN{v}\|_{s,E} \quad\forall \trN{v}\in H^s(E).
  \end{align*}
\end{lemma}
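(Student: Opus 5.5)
First I would decouple the conditions. At each endpoint $z\in\partial E$ the matrix $(t\,\vert\,n)(z)$ is orthogonal, hence invertible, so the vertex conditions are equivalent to $\trD{w}(z)=\trD{v}(z)$, $\partial_t\trD{w}(z)=\partial_t\trD{v}(z)$ and $\trN{w}(z)=\trN{v}(z)$. Here $t,n$ are taken on $\overline E$; the orientation is the same for both functions. The definition therefore splits into two independent one-dimensional interpolation problems. The first is a Hermite-type interpolation of $\trD{v}$ in $P_{p+2+\delta_{p0}}(E)$ using values and first derivatives at both endpoints plus moments against $P_{p-2}(E)$. That is $4+(p-1)=p+3$ conditions for $p\ge2$, $4$ for $p\in\{0,1\}$, and always $\dim=p+3+\delta_{p0}$. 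The second is a Lagrange-type interpolation of $\trN{v}$ in $P_{p+1}(E)$ using endpoint values plus moments against $P_{p-1}(E)$, which gives $p+2$ conditions. Unisolvence is exactly the argument of Proposition~\ref{prop_dofsYhp}.

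Next, I pull back to the reference interval via $\gamma_E$. Set $\hat v_0:=\trD{v}\circ\gamma_E$ and $\hat v_n:=\trN{v}\circ\gamma_E$ on $[0,1]$. Since $P_p(E)$ is defined as the pullback of $P_p(0,1)$, the interpolants correspond to reference operators $\hat I_0,\hat I_n$. One point needs care. The tangential derivative is $\partial_t=|\gamma_E'|^{-1}\frac{d}{d\hat t}$, so the endpoint derivative condition on $\hat v_0$ is $\hat w_0'(\hat z)=\hat v_0'(\hat z)$, which does not involve the metric. The moment conditions $\dual{\cdot}{\mu}_E=\int_0^1(\cdot)\circ\gamma_E\,\hat\mu\,|\gamma_E'|\,d\hat t$, however, carry the weight $|\gamma_E'|$. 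Since $\{\hat\mu|\gamma_E'|\}$ is not a polynomial space, I would treat the reference operator as depending on the weight $\omega:=|\gamma_E'|/|E|$. This weight is smooth, bounded above and below uniformly, with derivatives bounded uniformly by the assumption $|\gamma_E'|\eqsim|E|$ and smoothness of the edges. On the reference element $\hat I_0$ is bounded $H^2(0,1)\to P_{p+2+\delta_{p0}}$, because $H^2\subset C^1$. Similarly $\hat I_n$ is bounded $H^1(0,1)\to P_{p+1}$. Both bounds hold uniformly over the compact family of admissible weights, by continuity of the inverse of the unisolvent moment matrix in $\omega$. Both operators reproduce the respective polynomial spaces.

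Then a Bramble--Hilbert argument gives $\|\hat v_0-\hat I_0\hat v_0\|_{2,(0,1)}\lesssim|\hat v_0|_{s,(0,1)}$ for $2\le s\le p+3+\delta_{p0}$, and analogously $\|\hat v_n-\hat I_n\hat v_n\|_{1,(0,1)}\lesssim|\hat v_n|_{s,(0,1)}$ for $1\le s\le p+2$. Scaling back to $E$ is the final step. The derivatives are converted by the chain rule with $\gamma_E$ smooth and $|\gamma_E'|\eqsim h_E$: $\|\partial_t^k(\cdot)\|_E\eqsim h_E^{1/2-k}\|\frac{d^k}{d\hat t^k}(\cdot\circ\gamma_E)\|$ up to lower-order terms. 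Conversely, $|\hat v|_{s}\lesssim h_E^{s-1/2}\|v\|_{s,E}$, where the full norm absorbs the lower-order contributions produced by the curvature of $\gamma_E$. This yields the stated bounds with the full $H^s(E)$ norm on the right.

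I expect the main obstacle to be this uniformity for curved edges. The curvature prevents the spaces $P_p(E)$ from being affine images, the chain rule produces lower-order terms, and the moment weights vary. I would handle it as above: show $h$-uniform reference stability over a bounded family of weights and parametrizations, then apply Bramble--Hilbert to the full $H^s$ norm rather than the seminorm.
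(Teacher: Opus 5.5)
Your proposal follows the paper's route. The paper likewise reduces the vertex conditions, via invertibility of $(t\,\vert\,n)$ at the endpoints, to Hermite conditions on $\trD{w}$ and Lagrange conditions on $\trN{w}$, and then simply cites standard interpolation estimates; you make those estimates explicit through the reference interval, Bramble--Hilbert and scaling, including the weighted moments on curved edges. The one point to tighten is the scaling bound $|\hat v|_{s,(0,1)}\lesssim h_E^{s-1/2}\|v\|_{s,E}$: it needs $\|\omega^{(k)}\|_\infty\lesssim h_E^{k}$, which holds e.g.\ when $\gamma_E$ is an affinely rescaled piece of a fixed smooth parametrization of $\Gamma_E$, and neither $|\gamma_E'|\eqsim|E|$ nor merely uniformly bounded derivatives of $\omega$ gives this, since $\hat v''$ contains the term $h_E\,\omega'\,(\partial_t v)\circ\gamma_E$, which then has too low an order in $h_E$ --- an assumption the paper also uses tacitly.
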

\begin{proof}
  Note that $(\partial_t \trD{w},\trN{w})({t} \,\vert\, {n})(z) = (\partial_t \trD{v},\trN{v})({t}\,\vert\, {n})(z)$ implies that $\partial_t\trD{w}(z) = \partial_t\trD{v}(z)$ for $z\in\partial E$. Hence, together with $\trD{w}(z) = \trD{v}(z)$, one sees that $\trD{w}$ interpolates $\trD{v}$ in the endpoints.
  By standard approximation estimates we conclude the first assertion. The second one follows by a similar argumentation. 
\end{proof}

\begin{theorem}\label{thm_NodInt}
  Let $p\in\mathbb{N}_0$.
  If $\tr{v}=(\trD{v},\trN{v})\in H^{3/2,1/2}(\Gamma)$ with $\trD{v}\in H^{p+3}(\cE_h)$, $\trN{v}\in H^{p+2}(\cE_h)$, then
  \begin{align*}
    \|(1-I_h)\tr{v}\|_{3/2,1/2,\Gamma} \lesssim h^{p+3/2}(\|\trD{v}\|_{p+3,\cE_h} + \|\trN{v}\|_{p+2,\cE_h}).
  \end{align*}
\end{theorem}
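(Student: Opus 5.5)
We reduce the global trace norm to the intrinsic norm of Lemma~\ref{lem_isomorphic} and then to edgewise estimates from Lemma~\ref{lem_NodIntLocal}.

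First, $I_h$ is well defined. Since $\tr{v}\in H^{3/2,1/2}(\Gamma)$, the functions $\trD{v}$ and $(\partial_t\trD{v},\trN{v})(t\,\vert\,n)$ lie in $H^1(\Gamma)$ by Lemma~\ref{lem_isomorphic}, hence are continuous on $\Gamma$. The edgewise regularity gives $\trD{v}\in C^1(\overline E)$ and $\trN{v}\in C^0(\overline E)$. On each $E\in\cE_h$, $I_h\tr{v}$ coincides with the local interpolant $(\trD{w},\trN{w})$ of Lemma~\ref{lem_NodIntLocal}, because the degrees of freedom \eqref{dofsYhp} are edge-local or shared nodal values.

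Set $\tr{e}=(\trD{e},\trN{e}):=(1-I_h)\tr{v}$ and $\tr{\eta}:=(\partial_t\trD{e},\trN{e})(t\,\vert\,n)$. Then $\trD{e}\in H^1(\Gamma)$ and $\tr{\eta}\in H^1(\Gamma)^2$ are continuous and satisfy $\trD{e}(z)=0$ and $\tr{\eta}(z)=0$ at every $z\in\cN_h$. By Lemma~\ref{lem_isomorphic} it suffices to bound
\[
\tnorm{\tr{e}}_{3/2,1/2,\Gamma}^2 \lesssim \|\trD{e}\|_{1,\Gamma}^2 + \|\tr{\eta}\|_{1/2,\Gamma}^2 .
\]
The $H^1$ part is edgewise. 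Taking $s=p+3$ in Lemma~\ref{lem_NodIntLocal} gives $\|\trD{e}\|_{1,E}\lesssim h_E^{p+2}\|\trD{v}\|_{p+3,E}$, which is more than enough.

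The main obstacle is the nonlocal $H^{1/2}(\Gamma)$ norm of $\tr{\eta}$. We localize it using the vanishing of $\tr{\eta}$ at all mesh nodes. Each component of $\tr{\eta}$ restricted to $E$ lies in $H^1_0(E)$, or in $\wilde H^{1/2}(E)$. Functions with vanishing nodal values admit the localization
\[
\|\tr{\eta}\|_{1/2,\Gamma}^2 \lesssim \sum_{E\in\cE_h}\|\tr{\eta}\|_{\wilde H^{1/2}(E)}^2 ,
\]
which follows from the standard splitting of the Sobolev--Slobodeckij double integral into near-field and far-field parts, together with Hardy-type inequalities. The interpolation $\|w\|_{\wilde H^{1/2}(E)}^2\lesssim \|w\|_E\,\|\partial_t w\|_E$ for $w\in H^1_0(E)$ then reduces everything to $L_2$ bounds for $\tr{\eta}$ and $\partial_t\tr{\eta}$ on each edge. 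Both interpolation inequalities should be applied on the reference interval and transported via $\gamma_E$, so that the constants do not depend on $h_E$.

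On $E$, the matrix $(t\,\vert\,n)$ is smooth with bounded derivative, since the curvature is bounded on each $\Gamma'\in\cE(\Omega)$. Hence
\[
\|\tr{\eta}\|_E \lesssim \|\partial_t\trD{e}\|_E+\|\trN{e}\|_E ,
\]
\[
\|\partial_t\tr{\eta}\|_E \lesssim \|\partial_t^2\trD{e}\|_E+\|\partial_t\trN{e}\|_E+\|\partial_t\trD{e}\|_E+\|\trN{e}\|_E .
\]
Lemma~\ref{lem_NodIntLocal} with $s=p+3$ for $\trD{v}$ and $s=p+2$ for $\trN{v}$ yields
\[
\|\tr{\eta}\|_E\lesssim h_E^{p+2}\bigl(\|\trD{v}\|_{p+3,E}+\|\trN{v}\|_{p+2,E}\bigr),
\qquad
\|\partial_t\tr{\eta}\|_E\lesssim h_E^{p+1}\bigl(\|\trD{v}\|_{p+3,E}+\|\trN{v}\|_{p+2,E}\bigr).
\]
Multiplying the two bounds gives $\|\tr{\eta}\|_{\wilde H^{1/2}(E)}^2\lesssim h_E^{2p+3}(\cdots)^2$. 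Summing over $E$ and taking square roots gives the claimed rate $h^{p+3/2}$.
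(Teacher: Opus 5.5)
Your proof is correct. It shares the paper's skeleton: well-definedness of $I_h$, reduction to the intrinsic norm \eqref{tnorm}, and edgewise bounds from Lemma~\ref{lem_NodIntLocal} of order $h^{p+2}$ in $L_2$ and $h^{p+1}$ for the tangential derivative. Where you differ is in how you handle the critical $H^{1/2}(\Gamma)$ term.

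The paper does not localize at all. It notes that the rotated error components $f:=(\partial_t(\trD{v}-\trD{w}),\trN{v}-\trN{w})^\top\cdot t$ (and likewise with $n$) lie in $H^1(\Gamma)$. It then applies the global interpolation inequality $\|f\|_{1/2,\Gamma}\lesssim\|f\|_\Gamma^{1/2}\|f\|_{1,\Gamma}^{1/2}$ on the closed curve and inserts $\|f\|_\Gamma\lesssim h^{p+2}(\cdots)$ and $\|f\|_{1,\Gamma}\lesssim h^{p+1}(\cdots)$. This uses only $f\in H^1(\Gamma)$, not the vanishing of $f$ at the nodes. It needs neither the near-field/far-field splitting with a Hardy-type (chord-arc) bound behind $\|f\|_{1/2,\Gamma}^2\lesssim\sum_E\|f\|_{\wilde H^{1/2}(E)}^2$, nor the scaling of the $\wilde H^{1/2}(E)$ interpolation inequality.

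Your extra ingredients are standard and valid here, since $\Gamma$ is a Lipschitz curve and $f|_E\in H^1_0(E)$. They also buy something the paper's argument does not give. You obtain the local bound $\|(1-I_h)\tr{v}\|_{3/2,1/2,\Gamma}^2\lesssim\sum_{E\in\cE_h}h_E^{2p+3}\bigl(\|\trD{v}\|_{p+3,E}+\|\trN{v}\|_{p+2,E}\bigr)^2$. The paper's product $\|f\|_\Gamma\|f\|_{1,\Gamma}$ of two global sums only yields the maximal mesh size $h$. That would not suffice for locally refined or graded meshes near corners.

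One minor imprecision: Lemma~\ref{lem_isomorphic} by itself places the rotated components of $\tr{v}$ only in $H^{1/2}(\Gamma)$. Their membership in $H^1(\Gamma)$, and hence continuity so that $I_h$ is defined, follows from combining this with the edgewise regularity. The reason is that a piecewise $H^1$ function with a jump at a node does not belong to $H^{1/2}(\Gamma)$.
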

\begin{proof}
  Since $\trD{v} \in H^{p+3}(\cE_h)$, $\trN{v}\in H^{p+2}(\cE_h)$, the conditions $(\partial_t\trD{v},\trN{v})^\top\cdot{t} \in H^{1/2}(\Gamma)$ and $(\partial_t\trD{v},\trN{v})^\top\cdot{n} \in H^{1/2}(\Gamma)$ from Lemma~\ref{lem_isomorphic} imply that $(\partial_t\trD{v},\trN{v})^\top\cdot{t} \in C^0(\Gamma)$ and $(\partial_t\trD{v},\trN{v})^\top\cdot{n}\in C^0(\Gamma)$.
  Therefore, $I_h\tr{v}$ is well defined. 
  In fact, we even have that $(\partial_t\trD{v},\trN{v})^\top\cdot{t} \in H^1(\Gamma)$ and $(\partial_t\trD{v},\trN{v})^\top\cdot{n} \in H^1(\Gamma)$.

  Let us write $I_h\tr{v} = (\trD{w},\trN{w})$, 
  and recall definition \eqref{tnorm} of $\tnorm{\cdot}_{3/2,1/2,\Gamma}$.
  Since $\tnorm{\cdot}_{3/2,1/2,\Gamma}$ and $\|\cdot\|_{3/2,1/2,\Gamma}$ are
  equivalent norms in $H^{3/2,1/2}(\Gamma)$ by Lemma~\ref{lem_isomorphic}, it is enough to bound
  \begin{align}\label{thm_NodInt:proof1}
    \|\trD{v}-\trD{w}\|_{1,\Gamma} + \|(\partial_t(\trD{v}-\trD{w}),\trN{v}-\trN{w})^\top\cdot{t}\|_{1/2,\Gamma}
    + \|(\partial_t(\trD{v}-\trD{w}),\trN{v}-\trN{w})^\top\cdot{n}\|_{1/2,\Gamma}.
  \end{align}
  The first term is estimated by applying Lemma~\ref{lem_NodIntLocal}, which results in
  \begin{align*}
    \|\trD{v}-\trD{w}\|_{1,\Gamma} \lesssim h^{p+2}\|\trD{v}\|_{p+3,\cE_h} \lesssim h^{p+3/2}\|\trD{v}\|_{p+3,\cE_h}.
  \end{align*}
  Next, we consider the second term of~\eqref{thm_NodInt:proof1}. 
  By our previous considerations we have that $(\partial_t(\trD{v}-\trD{w}),\trN{v}-\trN{w})^\top\cdot{t} \in H^1(\Gamma)$. 
  The interpolation estimate then gives
  \begin{align}\label{thm_NodInt:proof2}
    \begin{split}
    &\|(\partial_t(\trD{v}-\trD{w}),\trN{v}-\trN{w})^\top\cdot{t}\|_{1/2,\Gamma}
    \\
    &\qquad\lesssim 
    \|(\partial_t(\trD{v}-\trD{w}),\trN{v}-\trN{w})^\top\cdot{t}\|_{\Gamma}^{1/2}
    \|(\partial_t(\trD{v}-\trD{w}),\trN{v}-\trN{w})^\top\cdot{t}\|_{1,\Gamma}^{1/2}.
  \end{split}
  \end{align}
  Noting that $|{t}|=1$ and applying Lemma~\ref{lem_NodIntLocal}, we obtain
  \begin{align*}
    \|(\partial_t(\trD{v}-\trD{w}),\trN{v}-\trN{w})^\top\cdot{t}\|_E 
    &\leq \|\partial_t(\trD{v}-\trD{w})\|_E + \|\trN{v}-\trN{w}\|_E
    \\
    &\lesssim h_E^{p+2}\|\trD{v}\|_{p+3,E} + h_E^{p+2}\|\trN{v}\|_{p+2,E}.
  \end{align*}
  Furthermore, the triangle inequality and product rule show for $E\in\cE_h$ that
  \begin{align*}
    \|\partial_t\big((\partial_t(\trD{v}-\trD{w}),\trN{v}-\trN{w})^\top\cdot{t}\big)\|_E
    &\leq \|\partial_t^2(\trD{v}-\trD{w}){t}_1\|_E+\|\partial_t(\trD{v}-\trD{w})\partial_t {t}_1\|_E
    \\
    &\qquad + \|\partial_t(\trN{v}-\trN{w}){t}_2\|_E+\|(\trN{v}-\trN{w})\partial_t {t}_2\|_E.
  \end{align*}
  Using that $\|\partial_t{t}\|_{\infty,E}\lesssim 1$ together with the approximation properties from Lemma~\ref{lem_NodIntLocal}, we further obtain
  \begin{align*}
    \|\partial_t\big((\partial_t(\trD{v}-\trD{w}),\trN{v}-\trN{w})^\top\cdot{t}\big)\|_E &\lesssim (h_E^{p+1}+h_E^{p+2})\|\trD{v}\|_{p+3,E}
    + (h_E^{p+1}+h_E^{p+2})\|\trN{v}\|_{p+2,E}
    \\
    &\lesssim h_E^{p+1}(\|\trD{v}\|_{p+3,E}+\|\trN{v}\|_{p+2,E}).
  \end{align*}
  In view of~\eqref{thm_NodInt:proof2}, we have that
  \begin{align*}
    \|(\partial_t(\trD{v}-\trD{w}),\trN{v}-\trN{w})^\top\cdot{t}\|_{1/2,\Gamma}
    &\lesssim h^{p/2+1}(\|\trD{v}\|_{p+3,\cE_h} + \|\trN{v}\|_{p+2,\cE_h})^{1/2}
    \\
    &\qquad\times h^{p/2+1/2}(\|\trD{v}\|_{p+3,\cE_h} + \|\trN{v}\|_{p+2,\cE_h})^{1/2}.
  \end{align*}
  The third term in~\eqref{thm_NodInt:proof1} can be estimated with the same arguments.
  We omit further details. 

  Overall, we conclude that~\eqref{thm_NodInt} can be bounded as claimed.
\end{proof}

An alternative to interpolation operator $I_h$ is the orthogonal projection
$\Pi_h \colon H^{1,0}(\Gamma) \to Y_h$ on
$H^{1,0}(\Gamma) := H^1(\Gamma)\times L_2(\Gamma)$ with canonical norm $\|\cdot\|_{1,0,\Gamma}$.

\begin{cor} \label{cor_Pihp}
  Let $p\in\mathbb{N}_0$.
  If $\tr{v}=(\trD{v},\trN{v})\in H^{3/2,1/2}(\Gamma)$ with $\trD{v}\in H^{p+3}(\cE_h)$, $\trN{v}\in H^{p+2}(\cE_h)$, then
  \begin{align*}
    \|(1-\Pi_h)\tr{v}\|_{3/2,1/2,\Gamma} \lesssim h^{p+3/2}(\|\trD{v}\|_{p+3,\cE_h} + \|\trN{v}\|_{p+2,\cE_h}).
  \end{align*}
\end{cor}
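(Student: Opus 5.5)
The plan is to reduce the estimate for $\Pi_h$ to the one for $I_h$ from Theorem~\ref{thm_NodInt}, using the $H^{1,0}$-orthogonality of $\Pi_h$ together with an inverse estimate on $Y_h$. Since $\Pi_h$ is a projection onto $Y_h$ and $I_h\tr{v}\in Y_h$, we have $\Pi_h I_h\tr{v} = I_h\tr{v}$, and hence
\[
   (1-\Pi_h)\tr{v} = (1-I_h)\tr{v} - \Pi_h(1-I_h)\tr{v}.
\]
The first term is bounded by Theorem~\ref{thm_NodInt}. For the second term, $\tr{e}:=(1-I_h)\tr{v}$, it remains to estimate $\|\Pi_h\tr{e}\|_{3/2,1/2,\Gamma}$.

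\textbf{Step 1 (inverse estimate).} I would prove that every $\tr{w}\in Y_h$ satisfies
\[
   \|\tr{w}\|_{3/2,1/2,\Gamma}\lesssim h_{\min}^{-1/2}\|\tr{w}\|_{1,0,\Gamma}.
\]
By Lemma~\ref{lem_isomorphic} it suffices to bound $\tnorm{\tr{w}}_{3/2,1/2,\Gamma}$. The components $(\partial_t w_0,w_n)^\top\cdot t$ and $(\partial_t w_0,w_n)^\top\cdot n$ are continuous and piecewise smooth, so they lie in $H^1(\Gamma)$. Interpolation gives $\|f\|_{1/2}\lesssim \|f\|^{1/2}\|f\|_1^{1/2}$, and elementwise inverse estimates give $\|\partial_t f\|_E\lesssim h_E^{-1}\|f\|_E$; here the products with $t,n$ are handled via $\|\partial_t t\|_\infty\lesssim 1$, as in the proof of Theorem~\ref{thm_NodInt}. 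Together these yield $\|f\|_{1/2,\Gamma}\lesssim h_{\min}^{-1/2}\|f\|_\Gamma\lesssim h_{\min}^{-1/2}\|\tr{w}\|_{1,0,\Gamma}$.

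\textbf{Step 2 (stability of $\Pi_h$).} Since $\Pi_h$ is an orthogonal projection, $\|\Pi_h\tr{e}\|_{1,0,\Gamma}\le\|\tr{e}\|_{1,0,\Gamma}$. By Lemma~\ref{lem_NodIntLocal} with $s=p+3$ and $s=p+2$, respectively,
\[
   \|\tr{e}\|_{1,0,\Gamma}\lesssim h^{p+2}\bigl(\|\trD{v}\|_{p+3,\cE_h}+\|\trN{v}\|_{p+2,\cE_h}\bigr).
\]
Combining Steps 1 and 2 gives
\[
   \|\Pi_h\tr{e}\|_{3/2,1/2,\Gamma}\lesssim h_{\min}^{-1/2}h^{p+2}\bigl(\|\trD{v}\|_{p+3,\cE_h}+\|\trN{v}\|_{p+2,\cE_h}\bigr),
\]
which is of order $h^{p+3/2}$ under quasi-uniformity, $h\eqsim h_{\min}$.

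\textbf{Main obstacle.} The inverse estimate only gives the global factor $h_{\min}^{-1/2}$, so the argument above implicitly assumes quasi-uniformity, while the corollary is stated without it. If that assumption must be avoided, I would first try a weighted variant: prove local $L_2$-stability of $\Pi_h$ in an $h$-weighted norm, using exponential decay of the $H^{1,0}$ projection on locally quasi-uniform meshes. This would allow the local factors $h_E^{-1/2}$ to be absorbed into the local errors $h_E^{p+2}$. The simplest honest route, however, is to state the quasi-uniformity assumption within the proof.
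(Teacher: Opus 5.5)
Your argument is the paper's proof. It uses the same identity $(1-\Pi_h)\tr{v}=(1-\Pi_h)(1-I_h)\tr{v}$, the inverse estimate $\|\tr{w}\|_{3/2,1/2,\Gamma}\lesssim h^{-1/2}\|\tr{w}\|_{1,0,\Gamma}$ on $Y_h$ (via $\tnorm{\cdot}_{3/2,1/2,\Gamma}$ as in Theorem~\ref{thm_NodInt}), the $H^{1,0}(\Gamma)$-stability of $\Pi_h$, and the bound $\|(1-I_h)\tr{v}\|_{1,0,\Gamma}\lesssim h^{p+2}(\|\trD{v}\|_{p+3,\cE_h}+\|\trN{v}\|_{p+2,\cE_h})$. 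Your quasi-uniformity caveat is correct and not a gap relative to the paper: the paper writes the inverse estimate with the global $h=\max_E h_E$, which tacitly presumes $h_{\min}\eqsim h$, so your version is as complete as the paper's and more explicit.
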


\begin{proof}
  We need the inverse estimate
  \begin{align*}
    \|\tr{w}\|_{3/2,1/2,\Gamma} 
    \lesssim h^{-1/2} \|\tr{w}\|_{1,0,\Gamma}
    \quad \forall \tr{w}=(\trD{w},\trN{w}) \in Y_h.
  \end{align*}
  It is a consequence of standard inverse estimates and
  arguments used
  in the proof of Theorem~\ref{thm_NodInt} to bound $\tnorm{\cdot}_{3/2,1/2,\Gamma}$,
  cf.~\eqref{thm_NodInt:proof1}.
  This implies
  \begin{align*}
    \|(1-\Pi_h)\tr{v}\|_{3/2,1/2,\Gamma} 
    &= \|(1-\Pi_h)(1-I_h)\tr{v}\|_{3/2,1/2,\Gamma} 
    \\
    &\le \|(1-I_h)\tr{v}\|_{3/2,1/2,\Gamma} + \|\Pi_h(1-I_h)\tr{v}\|_{3/2,1/2,\Gamma} 
    \\
    &\lesssim \|(1-I_h)\tr{v}\|_{3/2,1/2,\Gamma} + h^{-1/2} \|\Pi_h(1-I_h)\tr{v}\|_{1,0,\Gamma} 
      \\
      &\le \|(1-I_h)\tr{v}\|_{3/2,1/2,\Gamma} + h^{-1/2} \|(1-I_h)\tr{v}\|_{1,0,\Gamma}.
  \end{align*}

The arguments in the proof of Theorem~\ref{thm_NodInt:proof1} also show that
\begin{align*}
  \|(1-I_h)\tr{v}\|_{1,0,\Gamma} \lesssim h^{p+2}(\|\trD{v}\|_{p+3,\cE_h} + \|\trN{v}\|_{p+2,\cE_h}).
\end{align*}
Therefore, the approximation result from Theorem~\ref{thm_NodInt} leads to the assertion.

\end{proof}

\subsection{Approximation estimates in $H^{-3/2,-1/2}(\Gamma)$} \label{sec_BEM_est_neu}

In the following, let $\npi{p}{E}\,\colon L_2(E)\to P_{p}(E)$ for $E\in\cE_h$ and
$\npi[0]{p}{\cE_h(\Gamma')}\,\colon L_2(\Gamma')\to P_p(\cE_h(\Gamma'))\cap C^0(\Gamma')$
be orthogonal projections.
Here, $\cE_h(\Gamma')\subset \cE_h$ denotes the mesh of $\Gamma'\in\cE(\Omega)$.
The next result follows from standard approximation estimates, see, e.g.,~\cite[Corollary~22.9]{ErnGuermondI} and~\cite[Theorem~10.2]{Steinbach_08_NAM}.

\begin{lemma}\label{lem_localApp}
  Let $p\in\mathbb{N}_0$, $E\in\cE_h$, and $\Gamma'\in \cE(\Omega)$. The estimates
  \begin{alignat*}{2}
    \|(1-\npi{p}{E})w\|_E
    &\lesssim h_E^s \|w\|_{s,E} &\quad&\forall w\in H^s(E),\\
    \|(1-\npi[0]{p}{\cE_h(\Gamma')})w\|_{\Gamma'}
    &\lesssim h^s \|w\|_{s,\cE_h(\Gamma')}
    &\quad&\forall w\in H^s(\cE_h(\Gamma'))\cap C^0(\overline{\Gamma'}),
    \\
    \|(1-\npi[0]{p}{\cE_h(\Gamma')})w\|_{\Gamma'} &\lesssim h^s \|w\|_{s,\Gamma'} &\quad&\forall w\in H^s(\Gamma')
  \end{alignat*}
  hold for $0\leq s \leq p+1$.
\end{lemma}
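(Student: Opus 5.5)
The three estimates are one-dimensional polynomial approximation results, transported to curved segments through the parametrizations $\gamma_E$. The plan is to reduce each of them to a statement on the reference interval $(0,1)$ (or on a parametrization of $\overline{\Gamma'}$) and then invoke the standard results cited, namely \cite[Corollary~22.9]{ErnGuermondI} for the elementwise projection and \cite[Theorem~10.2]{Steinbach_08_NAM} for the continuous projection.

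\textbf{Elementwise projection.} For the first estimate, I would use the best-approximation property of the $L_2(E)$-orthogonal projection:
\[
   \|(1-\npi{p}{E})w\|_E\le\|w-q\|_E\quad\text{for all } q\in P_p(E)=\{v\circ\gamma_E^{-1}\}.
\]
I would then pull back to $(0,1)$ via $\hat w:=w\circ\gamma_E$. Since $|\gamma_E'|\eqsim h_E$ and $\gamma_E$ is smooth with bounded higher derivatives relative to $h_E$, the $L_2$ norms transform by a factor $h_E^{1/2}$, and the Sobolev seminorms satisfy $|\hat w|_{k,(0,1)}\lesssim h_E^{k-1/2}\|w\|_{k,E}$. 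The chain rule contributes lower-order derivatives, each carrying at least the same power of $h_E$, which is why the full norm appears on the right-hand side. The Bramble--Hilbert lemma on $(0,1)$ gives $\inf_{\hat q}\|\hat w-\hat q\|\lesssim|\hat w|_{s}$ for integer $s\le p+1$. The fractional cases $0\le s\le p+1$ then follow by operator interpolation between integer orders, since $1-\npi{p}{E}$ is bounded in $L_2(E)$.

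\textbf{Continuous projection.} For the second and third estimates, best approximation again reduces the problem to constructing a continuous piecewise polynomial $q$ with the right error. I would take the elementwise interpolant that matches $w$ at the endpoints of each element, plus interior moments; for $p=0$ the space $P_0(\cE_h(\Gamma'))\cap C^0(\Gamma')$ consists of constants and needs separate treatment. Because $w\in C^0(\overline{\Gamma'})$, this interpolant is continuous. For $s>1/2$, the local error bound $h_E^s\|w\|_{s,E}$ follows on each element by scaling as above, and summing the squares gives the claim with $h^s\|w\|_{s,\cE_h(\Gamma')}$. The third estimate is the same statement under the stronger hypothesis $w\in H^s(\Gamma')$, which is exactly \cite[Theorem~10.2]{Steinbach_08_NAM}. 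For small $s$ (in particular $s\le 1/2$, where point values are not defined), I would instead use a quasi-interpolant of Scott--Zhang/Cl\'ement type, which is $L_2$-stable, and interpolate between $s=0$ and $s=1$.

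\textbf{Main obstacle.} The delicate points are the low-regularity range $s\le 1/2$ and the case $p=0$ of the second estimate. The broken norm $\|w\|_{s,\cE_h(\Gamma')}$ alone cannot control continuity across element boundaries, so the argument must rely on the continuity assumption. I would also need to check that, for $p=0$, the space $P_0\cap C^0$ (constants on $\Gamma'$) still gives $h^s$ only when $s\le 1$; I expect one actually needs $p\ge1$ there, or accepts $s=0$. Apart from this, the uniform equivalence constants coming from $\gamma_E$ are routine.
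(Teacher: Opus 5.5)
Your argument for the first estimate is the standard one, and so is your argument for the continuous projection with $p\ge1$ and $s>1/2$ (endpoint values plus interior moments, continuity inherited from $w\in C^0(\overline{\Gamma'})$, scaling through $\gamma_E$, best approximation). The paper gives no proof beyond citing \cite[Corollary~22.9]{ErnGuermondI} and \cite[Theorem~10.2]{Steinbach_08_NAM}.

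Your suspicion about $p=0$ is correct, and you should state it as a fact rather than an expectation. The space $P_0(\cE_h(\Gamma'))\cap C^0(\Gamma')$ consists of the constants on $\Gamma'$. Take $w$ to be the arc-length coordinate on $\Gamma'$. Then $\|(1-\npi[0]{0}{\cE_h(\Gamma')})w\|_{\Gamma'}$ is a fixed positive number, independent of $h$. So the second and the third estimate both fail for $p=0$ and every $s>0$; your obstacle paragraph mentions only the second. No separate treatment can repair this. It is harmless for the paper, which uses $\npi[0]{p}{\cE_h(\Gamma')}$ only with $p=1$ in the proof of Theorem~\ref{thm_BestApp}.

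The genuine gap is your plan for the second estimate when $s\le 1/2$. A Scott--Zhang/Cl\'ement quasi-interpolant combined with interpolation between $s=0$ and $s=1$ bounds the error by the norm of the interpolation space between $L_2(\Gamma')$ and $H^1(\Gamma')$. That is the global norm $\|w\|_{s,\Gamma'}$, so this route proves the third estimate, not the second. The third estimate is what the paper needs for $\partial_t\trD{v}\in H^{1/2}(\Gamma')$, which need not be continuous. The broken norm does not control the global one for $s\le1/2$, and the purely qualitative hypothesis $w\in C^0(\overline{\Gamma'})$ gives no quantitative control across element boundaries.

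In fact the second estimate is false for $0<s<1/2$ and every $p$. Let $\Gamma'$ be straight with uniform elements $E_j$. Let $w=(-1)^j$ on $E_j$, except in a layer of width $\delta$ at the right end of $E_j$, where $w$ ramps linearly to $(-1)^{j+1}$.
\begin{itemize}
\item Then $w$ is continuous and $|w|_{s,E_j}^2\lesssim\delta^{1-2s}$. Hence $\|w\|_{s,\cE_h(\Gamma')}\lesssim 1$ whenever $\delta\ll h$ and $\delta^{1-2s}\le h$.
\item By scaling, any continuous piecewise polynomial of degree $p$ misses the sign change by $\gtrsim h^{1/2}$ in $L_2$ on each pair of adjacent elements.
\item Summing over disjoint pairs gives $\|(1-\npi[0]{p}{\cE_h(\Gamma')})w\|_{\Gamma'}\gtrsim 1$, while $h^s\|w\|_{s,\cE_h(\Gamma')}\to 0$.
\end{itemize}
The second estimate must therefore be restricted to $p\ge1$ and $1/2<s\le p+1$. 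In that range point values exist and your nodal argument works. This covers its only use in the paper, namely $p=1$ and $s=2$ for $\psi_{\Gamma'}$.
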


\begin{theorem}\label{thm_BestApp}
  Let $\tr{m} = (\trnn{m},\trsf{m},\trJ{m})\in H^{-3/2,-1/2}(\Gamma)$ and $p\in\mathbb{N}_0$ be given. 
  If $\trnn{m} \in H^{p+1}(\cE_h)$ and $\trsf{m}\in H^{p}(\cE_h)$, then 
  \begin{align*}
    \min_{\tr{q}\in X_h}\|\tr{m}-\tr{q}\|_{-3/2,-1/2,\Gamma} \lesssim h^{p+3/2}(\|\trnn{m}\|_{p+1,\cE_h} + \|\trsf{m}\|_{p,\cE_h}). 
  \end{align*}
\end{theorem}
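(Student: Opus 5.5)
We estimate the $H^{-3/2,-1/2}(\Gamma)$-norm by duality, using Lemma~\ref{lem_duality}:
\[
   \|\tr{m}-\tr{q}\|_{-3/2,-1/2,\Gamma}
   = \sup_{\tr{v}\neq 0}\frac{\dual{\tr{m}-\tr{q}}{\tr{v}}}{\|\tr{v}\|_{3/2,1/2,\Gamma}}.
\]
We take $\tr{q}\in X_h$ such that the duality pairing (cf.~\eqref{dual}) reduces to edgewise $L_2$ pairings of approximation errors with functions whose orthogonal projection error can be bounded by \eqref{trestH2}. For $\tr{v}=(\trD{v},\trN{v})$ the pairing reads
\[
   \sum_{E}\dual{\trsf{m}-\trsf{q}}{\trD{v}}_E-\dual{\trnn{m}-\trnn{q}}{\trN{v}}_E+\sum_z(\trJ{m}-\trJ{q})(z)\trD{v}(z).
\]
The choice of $\tr{q}$ depends on $p$.

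\textbf{Case $p>1$.} Set $\trnn{q}:=\npi{p}{E}\trnn{m}$ and $\trsf{q}:=\npi{p-1}{E}\trsf{m}$ elementwise, and $\trJ{q}:=\trJ{m}$ at the corners $z\in\cN(\Omega)$. The corner terms then cancel. Since $\trsf{m}$ carries no information at interior mesh nodes, no further corner terms arise. By orthogonality,
\[
   \dual{(1-\npi{p}{E})\trnn{m}}{\trN{v}}_E=\dual{(1-\npi{p}{E})\trnn{m}}{(1-\npi{0}{E})\trN{v}}_E
   \lesssim h_E^{p+1}\|\trnn{m}\|_{p+1,E}\,h_E^{1/2}|\trN{v}|_{1/2,E},
\]
using the local $H^{1/2}$ Poincaré estimate $\|(1-\npi{0}{E})w\|_E\lesssim h_E^{1/2}|w|_{1/2,E}$. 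Similarly,
\[
   \dual{(1-\npi{p-1}{E})\trsf{m}}{\trD{v}}_E
\]
is bounded by subtracting $\npi{1}{E}\trD{v}$ (admissible since $p-1\ge1$), with $\|(1-\npi{1}{E})\trD{v}\|_E\lesssim h_E^{3/2}|\partial_t\trD{v}|_{1/2,E}$. This gives $h_E^{p}\|\trsf{m}\|_{p,E}\,h_E^{3/2}$. Summing with Cauchy--Schwarz and applying \eqref{trestH2} (together with $\|\trD{v}\|_{1,\Gamma}$, which is also controlled) yields the rate $h^{p+3/2}$.

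\textbf{Case $p=1$.} Here $P_1'(E)$ replaces $P_0$. We take $\trsf{q}$ as the $L_2(E)$-projection onto $P_1'(E)$. To retain the factor $h^{3/2}$ we need orthogonality against more than constants. The idea is to integrate by parts: pair $\trsf{m}-\trsf{q}$ with $\trD{v}-\ell$, where $\ell$ is the elementwise linear interpolant. Alternatively, regroup $\trsf{q}$ in the form $\partial_t$ of a $P_1$ function, so that the edgewise pairing becomes $-\dual{\cdot}{\partial_t\trD{v}}$ plus nodal terms. I expect that the natural construction is to choose $\trsf{q}|_E=\partial_t\phi_E$ with $\phi_E$ matching suitable moments, and to absorb the resulting nodal contributions into jump terms. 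This is the step where the details must be checked; at most the $h^{p}\|\trsf{m}\|_{p,E}$ factor with $p=1$ must combine with $h_E^{3/2}$ from a linear approximation of $\trD{v}$.

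\textbf{Case $p=0$.} Only $\trnn{q}=\npi{0}{E}\trnn{m}$ and nodal values $\trJ{q}(z)$ at all $z\in\cN_h$ are available. The shear term $\dual{\trsf{m}}{\trD{v}}_E$ (with $\trsf{m}\in L_2$) must be represented by point masses. Choose
\[
   \trJ{q}(z)=\trJ{m}(z)+\int\trsf{m}\,\eta_z\,ds,
\]
where $\eta_z$ are the piecewise linear hat functions on $\cE_h$, so that $\dual{\trsf{m}}{\trD{v}}-\sum_z\trJ{q}(z)\trD{v}(z)=\dual{\trsf{m}}{\trD{v}-I^1\trD{v}}$ up to corner terms, which cancel. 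Then
\[
   \|\trD{v}-I^1\trD{v}\|_E\lesssim h_E^{3/2}|\partial_t\trD{v}|_{1/2,E},
\]
which gives $h^{3/2}\|\trsf{m}\|_{0}$. I expect the main obstacle to be the $p=1$ case (and making the nodal bookkeeping rigorous for curved edges with $\gamma_E$). There I would first try the antiderivative / integration-by-parts reformulation so that $P_1'(E)$ acts as derivatives of $P_1$ interpolation.
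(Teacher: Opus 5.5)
Your cases $p>1$ and $p=0$ are fine and amount to the paper's argument. For $p=0$, your weights $\int\trsf{m}\,\eta_z\,ds$ with arc-length hat functions are exactly the nodal terms $\alpha(z)$ that the paper obtains by integrating by parts against $(1-\npi{0}{E})\psi_E$ with $\partial_t\psi_E=\trsf{m}|_E$. Your remainder $\dual{\trsf{m}}{\trD{v}-I^1\trD{v}}_E$ equals the paper's $-\dual{(1-\npi{0}{E})\psi_E}{\partial_t\trD{v}}_E$.

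The genuine gap is $p=1$, which you leave open, and the construction you sketch does not work as stated. For $p\ge 1$ the space $X_h$ has point masses only at the corners $\cN(\Omega)$, not at the interior mesh nodes $\cN_h\setminus\cN(\Omega)$. Suppose you take $\trsf{q}|_E=\partial_t\phi_E$ with $\phi_E$ fixed element by element, by moments or otherwise. Then integration by parts leaves residuals $\bigl[(\psi-\phi)\trD{v}\bigr](z)$ at every mesh node, and those at interior nodes cannot be absorbed into $\trJ{q}$. Your first choice, the elementwise $L_2$-projection onto $P_1'(E)$, creates no residuals, but it is orthogonal to only one function per element. In arc length it is $\partial_t$ of the nodal interpolant of an antiderivative, whose error has no useful orthogonality. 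So you get only $\|(1-\npi{0}{E})\trD{v}\|_E\lesssim h\|\partial_t\trD{v}\|_E$, and in general only $O(h^2)$ instead of $h^{5/2}$. Your variant of pairing with $\trD{v}-\ell$ requires $\dual{\trsf{m}-\trsf{q}}{\eta_z}=0$ at all interior nodes. That is a coupled system along each edge, and you do not establish its solvability or its uniform $L_2$-stability $\|\trsf{m}-\trsf{q}\|_{\Gamma'}\lesssim h\|\trsf{m}\|_{1,\cE_h}$.

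The missing idea is to integrate by parts on whole edges, using a \emph{continuous} approximation of a global antiderivative that is orthogonal to continuous piecewise linears.
\begin{itemize}
\item Take $\psi_{\Gamma'}$ with $\partial_t\psi_{\Gamma'}=\trsf{m}|_{\Gamma'}$ on the entire edge $\Gamma'\in\cE(\Omega)$.
\item Set $\trsf{q}|_{\Gamma'}:=\partial_t\bigl(\npi[0]{1}{\cE_h(\Gamma')}\psi_{\Gamma'}\bigr)$, where $\npi[0]{1}{\cE_h(\Gamma')}$ is the $L_2(\Gamma')$-orthogonal projection onto $P_1(\cE_h(\Gamma'))\cap C^0(\Gamma')$. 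This lies in $P_1'(\cE_h)$ by the very definition of that space, which is why $P_1'$ rather than $P_0$ appears in $X_h$.
\item Since $(1-\npi[0]{1}{\cE_h(\Gamma')})\psi_{\Gamma'}$ is continuous along $\Gamma'$, integration by parts leaves boundary terms only at the endpoints of $\Gamma'$, i.e., at corners. These are absorbed by $\trJ{q}:=\trJ{m}+\alpha$.
\item The remainder is $-\dual{(1-\npi[0]{1}{\cE_h(\Gamma')})\psi_{\Gamma'}}{\partial_t\trD{v}}_{\Gamma'}$. The $L_2$-orthogonality to continuous piecewise linears lets you replace $\partial_t\trD{v}$ by $(1-\npi[0]{1}{\cE_h(\Gamma')})\partial_t\trD{v}$, not just by its deviation from a constant.
\item Lemma~\ref{lem_localApp} then gives a factor $h^2$ from $\psi_{\Gamma'}\in H^2(\cE_h(\Gamma'))\cap C^0(\overline{\Gamma'})$, whose elementwise second derivative is $\partial_t\trsf{m}$. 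It gives a further $h^{1/2}$ from $\partial_t\trD{v}\in H^{1/2}(\Gamma')$, which is controlled by \eqref{trestH2}. Together this yields the rate $h^{5/2}$.
\end{itemize}
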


\begin{proof}
  {\bf The case $p\geq 1$.}
  Given any $\tr{q}\in X_h$, we write with $\tr{v}=(\trD{v},\trN{v})\in H^{3/2,1/2}(\Gamma)$
  \begin{align}\label{errtraceformula}
    \dual{\tr{m}-\tr{q}}{\tr{v}} = \sum_{E\in\cE_h} (\dual{\trsf{m}-\trsf{q}}{\trD{v}}_E - \dual{\trnn{m}-\trnn{q}}{\trN{v}}_E) + \sum_{z\in\cN(\Omega)} (\trJ{m}-\trJ{q})(z)\trD{v}(z).
  \end{align}
  We choose $\trnn{q}|_E := \npi{p}{E}\,\trnn{m}$ for all $E\in\cE_h$. Then, for any $E\in\cE_h$, Lemma~\ref{lem_localApp} yields
  \begin{align*}
    |\dual{\trnn{m}-\trnn{q}}{\trN{v}}_E| &= |\dual{(1-\npi{p}{E})\trnn{m}}{(1-\npi{p}{E})\trN{v}}_E| 
    \lesssim h_E^{p+1}\|\trnn{m}\|_{p+1,E} h_E^{1/2}\|\trN{v}\|_{1/2,E}.
  \end{align*}
  By the subadditivity of Sobolev norms and estimate \eqref{trestH2}, we have that
  \begin{align*}
    \sum_{E\in\cE_h}\|\trN{v}\|_{1/2,E}^2 \lesssim \sum_{\Gamma'\in\cE(\Omega)} \|\trN{v}\|_{1/2,\Gamma'}^2 \lesssim \|\tr{v}\|_{3/2,1/2,\Gamma}^2
  \end{align*}
  and, thus, 
  \begin{align} \label{pf_BestApp1}
    -\sum_{E\in\cE_h} \dual{\trnn{m}-\trnn{q}}{\trN{v}}_E \lesssim h^{p+3/2} \|\trnn{m}\|_{p+1,\cE_h} \|\tr{v}\|_{3/2,1/2,\Gamma}.
  \end{align}
  For the remaining two terms in~\eqref{errtraceformula}, we first consider the case $p=1$. 
  Let $\Gamma'\in\cE(\Omega)$ and note that there exists $\psi_{\Gamma'}\in H^{1}(\Gamma')$ with $\partial_t\psi_{\Gamma'} = \trsf{m}|_{\Gamma'}$.
  We choose $\trsf{q}|_{\Gamma'} := \partial_t(\npi[0]{1}{\cE_h(\Gamma')}\psi_{\Gamma'})$ and observe via integration by parts that
  \begin{align*}
    \sum_{\Gamma'\in\cE(\Omega)}\dual{\trsf{m}-\trsf{q}}{\trD{v}}_{\Gamma'}
    &=-\sum_{\Gamma'\in\cE(\Omega)}
       \dual{(1-\npi[0]{1}{\cE_h(\Gamma')})\psi_{\Gamma'}}{\partial_t \trD{v}}_{\Gamma'}
     + \bigl((1-\npi[0]{1}{\cE_h(\Gamma')})\psi_{\Gamma'}\bigr)\trD{v}\Big|_{\partial \Gamma'}.
    \\
    &=-\sum_{\Gamma'\in\cE(\Omega)}
       \dual{(1-\npi[0]{1}{\cE_h(\Gamma')})\psi_{\Gamma'}}{\partial_t \trD{v}}_{\Gamma'}
     + \sum_{z\in\cN(\Omega)} \alpha(z)\trD{v}(z),
  \end{align*}
  where $\alpha(z)=[(1-\npi[0]{1}{\cE_h(\Gamma')})\psi_{\Gamma'}](z)\in \R$ for $z\in \cN(\Omega)$. 
  Finally, we choose $\trJ{q} := \trJ{m}+\alpha$. It then follows that
  \begin{align*}
    \sum_{E\in\cE_h} \dual{\trsf{m}-\trsf{q}}{\trD{v}}_{E} + \sum_{z\in\cN(\Omega)} (\trJ{m}-\trJ{q})(z)\trD{v}(z)
    = -\sum_{\Gamma'\in\cE(\Omega)}\dual{(1-\npi[0]{1}{\cE_h(\Gamma')})\psi_{\Gamma'}}{\partial_t \trD{v}}_{\Gamma'}.
  \end{align*}
  We further estimate the last term with Lemma~\ref{lem_localApp} as
  \begin{align*}
    &\sum_{\Gamma'\in\cE(\Omega)}|\dual{(1-\npi[0]{1}{\cE_h(\Gamma')})\psi_{\Gamma'}}{\partial_t \trD{v}}_{\Gamma'}|
    =  \sum_{\Gamma'\in\cE(\Omega)}
        |\dual{(1-\npi[0]{1}{\cE_h(\Gamma')})\psi_{\Gamma'}}
              {(1-\npi[0]{1}{\cE_h(\Gamma')})\partial_t \trD{v}}_{\Gamma'}|
    \\
    &\qquad\lesssim \sum_{\Gamma'\in\cE(\Omega)}
       \|(1-\npi[0]{1}{\cE_h(\Gamma')})\psi_{\Gamma'}\|_{\Gamma'}
       \|(1-\npi[0]{1}{\cE_h(\Gamma')})\partial_t\trD{v}\|_{\Gamma'}
    \\
    &\qquad\lesssim \sum_{\Gamma'\in\cE(\Omega)}
      h^{2}\|\psi_{\Gamma'}\|_{p+1,\cE_h(\Gamma')}h^{1/2}\|\partial_t\trD{v}\|_{1/2,\Gamma'}
    \leq h^{5/2}\|\trsf{m}\|_{1,\cE_h}\|\partial_t\trD{v}\|_{1/2,\cE(\Omega)}.
  \end{align*}
  Noting that $\|\partial_t\trD{v}\|_{1/2,\cE(\Omega)}\lesssim \|\tr{v}\|_{3/2,1/2,\Gamma}$
  by~\eqref{trestH2}, we conclude with \eqref{pf_BestApp1} that, for $p=1$,
  \begin{align}\label{dualestp1}
    |\dual{\tr{m}-\tr{q}}{\tr{v}}| \lesssim h^{5/2}(\|\trnn{m}\|_{p+1,\cE_h} + \|\trsf{m}\|_{p,\cE_h})\|\tr{v}\|_{3/2,1/2,\Gamma}.
  \end{align}
  Next, consider $p>1$. We choose $\trsf{q}|_E := \npi{p-1}{E}\trsf{m}$ and $\trJ{q} := \trJ{m}$.
  Then, Lemma~\ref{lem_localApp} yields
  \begin{align}\label{dualestp}
  \begin{split}
    \sum_{E\in\cE_h} |\dual{\trsf{m}-\trsf{q}}{\trD{v}}_{E}| + \sum_{z\in\cN(\Omega)} |(\trJ{m}-\trJ{q})(z)\trD{v}(z)|
    &= \sum_{E\in\cE_h} |\dual{(1-\npi{p-1}{E})\trsf{m}}{(1-\npi{p-1}{E})\trD{v}}_{E}| \\
    &\lesssim \sum_{E\in\cE_h} h^{p}\|\trsf{m}\|_{p,E}
    h^{3/2} \|\trD{v}\|_{3/2,E}.
  \end{split}
  \end{align}
  By Lemma~\ref{lem_duality}, estimates~\eqref{dualestp1} and~\eqref{dualestp}
  prove the statement for $p\geq 1$.
  
  {\bf The case $p=0$.} In this case, instead of~\eqref{errtraceformula}, we have the form
  \begin{align*}
    \dual{\tr{m}-\tr{q}}{\tr{v}}= \sum_{E\in\cE_h}(\dual{\trsf m}{\trD{v}}_E-\dual{\trnn{m}-\trnn{q}}{\trN{v}}_E) + \sum_{z\in\cN(\Omega)} (\trJ{m}-\trJ{q})(z)\trD{v}(z).
  \end{align*}
  The term $\sum_{E\in\cE_h}\dual{\trnn{m}-\trnn{q}}{\trN{v}}_E$ can be estimated as before.
  In a similar spirit as for the case $p=1$, we choose $\psi_E\in H^1(E)$ for any $E\in\cE_h$ such that $\partial_t\psi_E = \trsf{m}|_E$.
  Note that $\partial_t\psi_E = \partial_t(\psi_E-\npi{0}{E}\,\psi_E)$.
  Using this observation, we obtain with integration by parts
  \begin{align*}
    \sum_{E\in\cE_h} \dual{\trsf{m}}{\trD{v}}_E &= -\sum_{E\in\cE_h} \dual{(1-\npi{0}{E})\psi_E}{\partial_t \trD{v}}_E + \big((1-\npi{0}{E})\psi_E\big)\trD{v}|_{\partial E} 
    \\
    &= -\sum_{E\in\cE_h} \dual{(1-\npi{0}{E})\psi_E}{\partial_t \trD{v}}_E + \sum_{z\in\cN_h} \alpha(z)\trD{v}(z),
  \end{align*}
  where $\alpha(z) = [(1-\npi{0}{E})\psi_E](z)$, $z\in\cN_h$. 
  This leads to choosing $\trJ{q}(z) := \alpha(z) $ for $z\in\cN_h\setminus\cN(\Omega)$
  and $\trJ{q}(z) := \trJ{m}(z)+\alpha(z)$ for $z\in\cN(\Omega)$. Overall, we conclude with Lemma~\ref{lem_localApp} that
  \begin{align*}
    &\sum_{E\in\cE_h} \dual{\trsf{m}}{\trD{v}}_E + \sum_{z\in\cN(\Omega)} \trJ{m}(z)\trD{v}(z) - \sum_{z\in\cN_h} \trJ{q}(z)\trD{v}(z)
    = -\sum_{E\in\cE_h} \dual{(1-\npi{0}{E})\psi_E}{\partial_t \trD{v}}_E
    \\
    &\quad= \sum_{E\in\cE_h} \dual{(1-\npi{0}{E})\psi_E}{(1-\npi{0}{E})\partial_t \trD{v}}_E
    \lesssim  h^{3/2}\sum_{E\in\cE_h}\|\psi_E\|_{1,E}\|\partial_t\trD{v}\|_{1/2,E}
    \lesssim  h^{3/2}\|\trsf{m}\|_{\Gamma}\|\tr{v}\|_{3/2,1/2,\Gamma}.
  \end{align*}
  An application of Lemma~\ref{lem_duality} finishes the proof.
\end{proof}

\subsection{Discretization and convergence} \label{sec_BEM_conv}
A direct discretization of~\eqref{BIE_saddle} reads:
\emph{Find $\tr{\widetilde{m}}_h\in X_h$ and $\widetilde\lambda_h\in P_1$ such that}
\begin{subequations} 
\begin{alignat*}{3}
  &\dual{\V\tr{\widetilde{m}}_h}{\tr{q}_h} + \dual{\tr{q}_h}{\trGg{\widetilde\lambda_h}}
  &&= \dual{\bigl(\K+\frac12\bigr)\tr{g}}{\tr{q}_h}
  \quad &&\forall\tr{q}_h\in X_h,\\
  &\dual{\tr{\widetilde{m}}_h}{\trGg{\mu}} &&=0 &&\forall\mu\in  P_1.
\end{alignat*}
\end{subequations}
The latter formulation is not feasible in practice as it requires to compute the boundary integrals on the right-hand side for a given datum $\tr{g}$. We propose to replace $\tr{g}$ by a suitable approximation $\tr{g}_h$ that allows to compute $\dual{\bigl(\K+\frac 12\bigr)\tr{g}_h}{\tr{q}_h}$, e.g., $\tr{g}_h = I_h\tr{g}$ or $\tr{g}_h = \Pi_h\tr{g}$.
This yields the boundary element method:
\emph{Find $\tr{m}_h\in X_h$ and $\lambda_h\in P_1$ such that}
\begin{subequations} \label{BIE_saddle:disc}
\begin{alignat}{3}
  &\dual{\V\tr{m}_h}{\tr{q}_h} + \dual{\tr{q}_h}{\trGg{\lambda_h}}
  &&= \dual{\bigl(\K+\frac 12\bigr)\tr{g}_h}{\tr{q}_h}
  \quad &&\forall\tr{q}_h\in X_h, \label{BIE_saddle:disc:a} \\
  &\dual{\tr{m}_h}{\trGg{\mu}} &&=0 &&\forall\mu\in  P_1.\label{BIE_saddle:disc:b}
\end{alignat}
\end{subequations}

\begin{theorem}\label{thm_BEM}
  Let $(\tr{m},\lambda)\in H^{-3/2,-1/2}(\Gamma)\times P_1$ be the solution of~\eqref{BIE_saddle}.
  If $p\leq 1$, assume that $\cN_h$ contains at least three points that are not colinear.
  Problem~\eqref{BIE_saddle:disc} admits a unique solution
  $(\tr{m}_h,\lambda_h)\in X_h\times  P_1$, and it satisfies
  \begin{align*}
    \|\tr{m}-\tr{m}_h\|_{-3/2,-1/2,\Gamma} \lesssim \min_{\tr{q}_h\in X_h} \|\tr{m}-\tr{q}_h\|_{-3/2,-1/2,\Gamma} + \|\tr{g}-\tr{g}_h\|_{3/2,1/2,\Gamma}.
  \end{align*}
\end{theorem}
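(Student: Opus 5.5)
We treat \eqref{BIE_saddle:disc} as a conforming Galerkin discretization of the saddle point problem \eqref{BIE_saddle} with perturbed right-hand side, and apply discrete Babu\v{s}ka--Brezzi theory. Conformity holds since $X_h\subset H^{-3/2,-1/2}(\Gamma)$ and the multiplier space $P_1$ is not discretized. The kernel of the constraint, $\{\tr{q}_h\in X_h;\ \dual{\tr{q}_h}{\trGg{\mu}}=0\ \forall\mu\in P_1\}$, is contained in $H^{-3/2,-1/2}_*(\Gamma)$. Hence Proposition~\ref{prop_Vposdef} gives coercivity of $\V$ on the discrete kernel with the $h$-independent constant $\alpha$.

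The main step is a uniform discrete inf--sup condition
\[
\inf_{\mu\in P_1\setminus\{0\}}\sup_{\tr{q}_h\in X_h\setminus\{0\}}
\frac{\dual{\tr{q}_h}{\trGg\mu}}{\|\mu\|_{2,\Omega}\|\tr{q}_h\|_{-3/2,-1/2,\Gamma}}\ge\beta>0,
\]
with $\beta$ independent of $h$. Since $P_1$ is three-dimensional, it suffices to exhibit a fixed three-dimensional subspace of $X_h$ that separates $P_1$.

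\emph{Case $p\ge1$.} The corner components $\R^{\#\cN(\Omega)}$ and the piecewise polynomial $\trsf{q}$-component are available.
\begin{itemize}
\item For $p>1$, $P_{p-1}(\cE_h)$ contains the piecewise constants, and restrictions of $P_1$ to $\Gamma$ are piecewise linear along straight edges. For curved edges I would instead use the functional $\ell(\tr{v})=\dual{\trsf q}{\trD v}_\Gamma$ with $\trsf{q}$ in a fixed finite-dimensional subspace. A cleaner, $h$-independent choice is $\trsf q\equiv$ piecewise constants on the edges $\Gamma'\in\cE(\Omega)$ combined with point values at corners. Then $\mu\mapsto(\dual{\mu}{1}_{\Gamma'},\mu(z))$ is injective on $P_1$, since a linear polynomial vanishing at all corners is zero when corners are not colinear. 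Otherwise use the integral means.
\item For $p=1$, $P_1'(E)$ does not contain constants on curved elements. I would therefore use the vertex/Dirac components together with $\trsf q$ supported on coarse edges, and fall back on the hypothesis about three non-colinear points.
\end{itemize}

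\emph{Case $p\le1$.} For $p=0$ the Dirac masses are available at all of $\cN_h$. For $p\le1$ in general, I would use the stated assumption: pick three non-colinear points $z_1,z_2,z_3\in\cN_h$ (or in $\cN(\Omega)$, where corner components exist). Define $\tr{q}_h=(0,0,\mu(z_i)\delta_{z_i})$, so that $\dual{\tr{q}_h}{\trGg\mu}=\sum_i\mu(z_i)^2$, which is a norm on $P_1$.

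The obstacle is the $h$-independence of $\|\tr{q}_h\|_{-3/2,-1/2,\Gamma}$. Point evaluations are bounded on $H^{3/2,1/2}(\Gamma)$ via $H^1(\Gamma)\hookrightarrow C^0$ and Lemma~\ref{lem_isomorphic}, so this norm is bounded by $C|\mu(z_i)|$ uniformly. The lower bound then requires the chosen points to stay uniformly non-colinear. I would fix three non-colinear points once on a coarse mesh that all refinements contain, or simply take them in $\cN(\Omega)$ when possible, and accept a constant depending on that configuration.

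With coercivity and inf--sup in hand, Brezzi's theorem yields unique solvability of \eqref{BIE_saddle:disc} and quasi-optimality of the unperturbed discrete solution $(\tr{\widetilde m}_h,\widetilde\lambda_h)$. This gives
\[
\|\tr m-\tr{\widetilde m}_h\|\lesssim\min_{\tr q_h}\|\tr m-\tr q_h\|,
\]
where the $\lambda$-approximation term vanishes because $P_1$ is exact. Finally, the stability of the discrete problem with respect to the right-hand side gives
\[
\|\tr{\widetilde m}_h-\tr m_h\|\lesssim\sup_{\tr q_h}\frac{\dual{(\K+\tfrac12)(\tr g-\tr g_h)}{\tr q_h}}{\|\tr q_h\|}\lesssim\|\tr g-\tr g_h\|_{3/2,1/2,\Gamma},
\]
using the boundedness of $\K$ and Lemma~\ref{lem_duality}. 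The triangle inequality concludes the proof.
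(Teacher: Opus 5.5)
Your Brezzi framework is sound, and so is your treatment of the data perturbation. Comparing with the exact-data discrete solution $\tr{\widetilde m}_h$ and using discrete stability is a legitimate alternative to the paper's route, which goes through the continuous auxiliary problem with data $\tr{g}_h$ and continuous stability. For the $\tr{m}$-component your step needs only coercivity, since $\tr{\widetilde m}_h-\tr{m}_h$ lies in the discrete kernel. Your $p=0$ argument is the paper's.

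The gap is the discrete inf--sup condition for $p\ge1$. For every $p\ge1$ the third component of $X_h$ is $\R^{\#\cN(\Omega)}$, so point masses exist only at corners of $\Omega$, not at mesh vertices. Hence for $p=1$ your test element $(0,0,\mu(z_i)\delta_{z_i})$ with $z_i\in\cN_h\setminus\cN(\Omega)$ is not in $X_h$. Your fallback to $\cN(\Omega)$ needs three non-colinear corners, which the theorem does not assume; the disk, one of the paper's test domains, has none. Your concrete choice for $p>1$ fails for the same reason. On a smooth closed $\Gamma$ (no corners, one edge), edge means plus corner values give a single functional, which cannot separate the three-dimensional space $P_1$. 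You never use the normal--normal component $\trnn{q}$, and that is exactly where the hypothesis on $\cN_h$ for $p=1$ has to enter.

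The missing idea, as in the paper, runs as follows. Since $\chi_E\in P_1(\cE_h)$, one has $\dual{(\chi_E,0,0)}{\trGg\mu}=-\dual{1}{\partial_n\mu}_E=-\grad\mu\cdot\int_E n\ds$. Here $\int_E n\ds$ is the chord of $E$ rotated by a right angle. Because the points of $\cN_h$ are not all colinear, these chords span $\R^2$, so vanishing for all $E$ forces $\grad\mu=0$. The remaining constant is detected by $(0,\partial_t w,0)$, with $w\in P_1(\cE_h)$ supported on one element and rising from $0$ to $1$; this pairs to $\mu(z^+)$.

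For $p>1$ the paper takes $\trsf{q}=\mu|_\Gamma$, which pairs to $\|\mu\|_\Gamma^2$ regardless of corners. Your caveat is justified: $\mu|_E\in P_{p-1}(E)$ requires straight, affinely parametrized elements. The $L_2(\Gamma)$-projection of $\mu$ onto $P_{p-1}(\cE_h)$ does the job for $h$ small.

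The paper's $p=1$ argument only establishes injectivity on each mesh. For the $h$-uniform constant you rightly ask for, apply the same two tests to unions of elements:
\begin{itemize}
\item indicators of two arcs whose vectors $\int n\ds$ stay uniformly linearly independent, with norm $\lesssim$ (arc length)$^{1/2}$ by Lemma~\ref{lem_isomorphic};
\item $\partial_t w$ with $w$ increasing from $0$ to $1$ along an arc, with norm $\lesssim 1$ because $\partial_t w$ has total variation $1$ and point values of $\trD{v}$ are bounded.
\end{itemize}
The resulting $3\times3$ pairing matrix is block triangular with $h$-independent diagonal blocks.
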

\begin{proof}
  First, let us establish unique solvability. The bilinear form $\dual{\V\cdot}\cdot$ is positive definite on the kernel $\{\tr{q}_h\in X_h;\, \dual{\tr{q}_h}{\trGg{\mu}} = 0 \,\forall \mu\in  P_1\} \subset H_*^{-3/2,-1/2}(\Gamma)$. Furthermore, the bilinear forms on the left-hand side and the functional on the right-hand side of~\eqref{BIE_saddle:disc} are continuous. By the Babu\v{s}ka--Brezzi theorem, it suffices to prove the discrete $\inf$-$\sup$ stability
  for $\dual{\tr{m}_h}{\trGg{\mu}}$ to guarantee unique solvability. 
  We distinguish between the three cases $p=0$, $p=1$, and $p>1$.
  
  Let $p=0$. By assumption, there are (at least) three non-colinear vertices $z_1,z_2,z_3\in\cN_h$. 
  Any $\mu\in  P_1$ is uniquely determined by its values in the vertices $z_1,z_2,z_3$, say, $\alpha_1,\alpha_2,\alpha_3$. 
  Choose $\tr{m}_h := (0,0,\trJ{q})\in X_h$ where $\trJ{q}(z_j) := \alpha_j$ for $j=1,2,3$,
  and $\trJ{q}(z) = 0$ for all other vertices. 
  Then, $\dual{\tr{m}_h}{\trGg{\mu}} = \sum_{j=1}^3 \alpha_j^2>0$ for $\mu\neq 0$. 
  
  Let $p=1$ and $\mu\in P_1$. Suppose that $\dual{\tr{m}_h}{\trGg{\mu}}= 0$ for all $\tr{m}_h\in X_h$. 
  We show that this implies $\mu=0$. For any $E\in\cE_h$ choose $\tr{m}_h = (\chi_E,0,0)\in X_h$ with $\chi_E\in P_0(\cE_h)$ denoting the indicator function on $E$. Then, $0=\dual{\tr{m}_h}{\trGg{\mu}} = -\dual{1}{\partial_n \mu}_E$ for all $E\in\cE_h$ implies that $\mu$ is constant by the assumption that $\cN_h$ contains at least three non-colinear nodes.
  Fix $E\in\cE_h$ and choose $\tr{m}_h := (0,\partial_t w,0)$ where $w\in P_1(\cE_h)$ with $w|_{E'} = 0$ for $E'\in\cE_h\setminus\{E\}$, and $w(z^-) = 0$, $w(z^+) = 1$, where $z^\pm$ denotes the two vertices of $E$ (in positive direction). 
  Then, $0=\dual{\tr{m}_h}{\trGg\mu} = \dual{\partial_t w}{\mu}_E = \mu(z^+)$
  implies that $\mu=0$.

  Let $p>1$. Given $\mu\in P_1$ set $\tr{m}_h:=(0,\mu|_\Gamma,0)$ and note that
  $\tr{m}_h \in X_h$. Then, $\dual{\tr{m}_h}{\trGg{\mu}} = \|\mu\|_{\Gamma}^2$
  implies the required inf-sup stability.
  
  To show the error estimate, we consider the auxiliary problem: \emph{Find $\tr{\widehat{m}}\in H^{-3/2,-1/2}(\Gamma)$ and $\widehat\lambda\in P_1$ such that}
  \begin{subequations} 
  \begin{alignat*}{3}
    &\dual{\V\tr{\widehat{m}}}{\tr{q}} + \dual{\tr{q}}{\trGg{\widehat\lambda}}
     &&= \dual{\bigl(\K+\frac 12\bigr)\tr{g}_h}{\tr{q}}
     \quad &&\forall\tr{q}\in H^{-3/2,-1/2}(\Gamma),\\
     &\dual{\tr{\widehat{m}}}{\trGg{\mu}} &&=0 &&\forall\mu\in  P_1.
  \end{alignat*}
  \end{subequations}
  Let $(\tr{\widehat m},\widehat\lambda)\in H^{-3/2,-1/2}(\Gamma)$ denote the unique solution of the latter problem and let $(\tr{m}_h,\lambda_h)\in X_h\times P_1$ denote the unique solution of~\eqref{BIE_saddle:disc}.  
  Note that $(\tr{m}_h,\lambda_h)$ is the Galerkin approximation of $(\tr{\widehat m},\widehat\lambda)$. 
  The theory of mixed methods shows that
  \begin{align*}
    \|\tr{\widehat m}-\tr{m}_h\|_{-3/2,-1/2,\Gamma} \lesssim \|\tr{\widehat m}-\tr{q}\|_{-3/2,-1/2,\Gamma} 
    \leq \|\tr{m}-\tr{q}\|_{-3/2,-1/2,\Gamma} + \|\tr{m}-\tr{\widehat m}\|_{-3/2,-1/2,\Gamma}
  \end{align*}
  for all $\tr{q}\in X_h$. By continuous dependence on the data of the continuous problem, we conclude that
  \begin{align*}
    \|\tr{m}-\tr{\widehat m}\|_{-3/2,-1/2,\Gamma} \lesssim \|\tr{g}-\tr{g}_h\|_{3/2,1/2,\Gamma}.
  \end{align*}
  The proof is finished by combining the latter two estimates and the triangle inequality.
\end{proof}

A combination of Theorems~\ref{thm_NodInt},~\ref{thm_BestApp},~\ref{thm_BEM}
and Corollary~\ref{cor_Pihp} proves the following convergence result.

\begin{cor} \label{cor_BEM}
  Under the notation and assumptions of Theorem~\ref{thm_BEM},
  suppose that $\tr{g}_h = I_h\tr{g}$ or $\tr{g}_h = \Pi_h\tr{g}$
  and that $\tr{g}$ as well as the solution $\tr{m}$ of~\eqref{BIE_saddle}
  satisfy the regularity assumptions of Theorems~\ref{thm_NodInt} and~\ref{thm_BestApp}. Then, 
  \begin{align*}
    \|\tr{m}-\tr{m}_h\|_{-3/2,-1/2,\Gamma} = \mathcal{O}(h^{p+3/2}). 
  \end{align*}
\end{cor}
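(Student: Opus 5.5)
The corollary is a direct combination of the quasi-optimality estimate of Theorem~\ref{thm_BEM} with the approximation results already proved. I will bound the two terms on its right-hand side separately.

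\textbf{Neumann term.} By Theorem~\ref{thm_BEM}, under the standing assumption on $\cN_h$ when $p\le 1$, the discrete problem \eqref{BIE_saddle:disc} is uniquely solvable and
\[
\|\tr{m}-\tr{m}_h\|_{-3/2,-1/2,\Gamma}\lesssim \min_{\tr{q}_h\in X_h}\|\tr{m}-\tr{q}_h\|_{-3/2,-1/2,\Gamma}+\|\tr{g}-\tr{g}_h\|_{3/2,1/2,\Gamma}.
\]
The regularity assumption gives $\trnn{m}\in H^{p+1}(\cE_h)$ and $\trsf{m}\in H^{p}(\cE_h)$. Theorem~\ref{thm_BestApp} then bounds the first term by $h^{p+3/2}\bigl(\|\trnn{m}\|_{p+1,\cE_h}+\|\trsf{m}\|_{p,\cE_h}\bigr)$.

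\textbf{Dirichlet term.} Here $\trD{g}\in H^{p+3}(\cE_h)$ and $\trN{g}\in H^{p+2}(\cE_h)$. If $\tr{g}_h=I_h\tr{g}$, Theorem~\ref{thm_NodInt} gives a bound of $h^{p+3/2}\bigl(\|\trD{g}\|_{p+3,\cE_h}+\|\trN{g}\|_{p+2,\cE_h}\bigr)$. If $\tr{g}_h=\Pi_h\tr{g}$, Corollary~\ref{cor_Pihp} gives the same bound.

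\textbf{Uniformity of the constants.} The piecewise Sobolev norms are taken over the mesh $\cE_h$, so they depend on $h$. I expect them to be bounded uniformly in $h$: each element lies in a single edge $\Gamma'\in\cE(\Omega)$, so the broken norms are dominated by the edgewise norms $\|\cdot\|_{s,\cE(\Omega)}$. This is the natural reading of the regularity assumption and the only point needing care.

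With that reading, both terms are $\mathcal{O}(h^{p+3/2})$, and combining them gives $\|\tr{m}-\tr{m}_h\|_{-3/2,-1/2,\Gamma}=\mathcal{O}(h^{p+3/2})$.
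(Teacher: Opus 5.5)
Your proposal is correct and matches the paper's argument: the paper also obtains the corollary by inserting Theorem~\ref{thm_BestApp} and Theorem~\ref{thm_NodInt} (resp.\ Corollary~\ref{cor_Pihp}) into the quasi-optimality bound of Theorem~\ref{thm_BEM}. Your remark that the broken norms are dominated, uniformly in $h$, by the edgewise norms on $\cE(\Omega)$ makes explicit a point the paper leaves implicit.
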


\section{Numerical results} \label{sec_num}

We consider the following pairs of domains (circle, square, pacman)
and prescribed smooth solutions $u$ of~\eqref{model},
\begin{alignat*}{2}
	\Omega &:= \big\{x\in \R^2;\; |x|<0.1\big\}, &&u(x) := (x_1^2 + x_2^2) (x_1^2 + 2x_1 x_2 - x_2^2),
	\\
	\Omega &:= (-0.1,0.1)^2, &&u(x) := (x_1^2 + x_2^2)(\sinh(2\pi x_1) \cos(2\pi x_2)),
	\\
	\Omega &:= \big\{r\,(\cos\theta,\sin\theta);\; r\in [0,0.1), \theta \in (-\tfrac78\pi,\tfrac78\pi)\big\},\qquad &&u(x) := r^4 \cos(2\theta); 
\end{alignat*}
see Figures~\ref{fig:circle}--\ref{fig:pacman} for the respective geometry along with the considered initial mesh. 
In each case, the corresponding boundary $\Gamma$ is parametrized with respect to the
arc length and positive orientation.
We follow Section~\ref{sec_BEM} to approximate the solution
$\tr{m}=\trdD{-\cC\Gradgrad u}$ of \eqref{BIE_saddle} for $\nu = 0$ on uniformly refined meshes by the solution $\tr{m}_h \in X_h$ of \eqref{BIE_saddle:disc} for $p\in\{0,1,2\}$ and $\tr{g}_h := \Pi_h\tr{g}$. 

Recall from Proposition~\ref{prop_Vposdef} that
$\tnorm{\cdot}^2 := \dual{\V(\cdot)}{(\cdot)}$ defines an equivalent \emph{energy} norm in
$H^{-3/2,-1/2}_*(\Gamma)$.  
Note that both $\tr{m}$ and $\tr{m}_h$ lie in $H^{-3/2,-1/2}_*(\Gamma)$.
In particular, the discretization error satisfies
\begin{align*}
	\| \tr{m} - \tr{m}_h \|_{-3/2,-1/2,\Gamma}^2 
	\eqsim \tnorm{\tr{m} - \tr{m}_h}^2
	= \dual{\V\tr{m}}{\tr{m}} - 2\dual{\V\tr{m}}{\tr{m}_h} + \dual{\V\tr{m}_h}{\tr{m}_h}.
\end{align*}
To compute the error, we replace $\tr{m} = (\trnn{m},\trsf{m},\trJ{m})$ by $(\pi_{h,p+1}\trnn{m},\pi_{h,p}\trsf{m},\trJ{m})$, where $\pi_{h,q}\colon$ $L_2(\Gamma) \to P_q(\cE_h)$ denotes the orthogonal projection for $q\in\mathbb{N}_0$.
Note that this approximation is of higher order, as
\begin{align*}
	&\| \tr{m} - (\pi_{h,p+1}\trnn{m},\pi_{h,p}\trsf{m},\trJ{m}) \|_{-3/2,-1/2,\Gamma}^2 
	\\ &\quad 
	= \sup_{\tr{v}\in H^{3/2,1/2}(\Gamma) \setminus\{0\}} \frac{\dual{(1-\pi_{h,p+1})\trsf{m}}{(1-\pi_{h,0})\trD{v}}_\Gamma - \dual{(1-\pi_{h,p}) \trnn{m}}{\trN{v}}_\Gamma }{\|\tr{v}\|_{3/2,1/2,\Gamma}}
	\\ &\quad
	\lesssim \sup_{\tr{v}\in H^{3/2,1/2}(\Gamma) \setminus\{0\}} \frac{h\|{(1-\pi_{h,p})\trsf{m}}\|_\Gamma \| \partial_t \trD{v}\|_\Gamma + \|(1-\pi_{h,p+1}) \trnn{m}\|_\Gamma \|\trN{v}\|_\Gamma }{\|\tr{v}\|_{3/2,1/2,\Gamma}}
	\\ &\quad
	\lesssim h\|{(1-\pi_{h,p})\trsf{m}}\|_\Gamma + \|(1-\pi_{h,p+1}) \trnn{m}\|_\Gamma 
	= \mathcal{O}(h^{p+2}).
\end{align*}
Figures~\eqref{fig:circle}--\eqref{fig:pacman} show the (approximate)
discretization errors measured in the energy norm.
We observe the expected convergence $\tnorm{\tr{m} - \tr{m}_h} = \mathcal{O}(h^{p+3/2}) = \mathcal{O}({\rm dofs}^{-p-3/2})$,
where ${\rm dofs}$ denotes the number of degrees of freedom, i.e., the dimension of $X_h$. 

\newcommand{\plotMyFigure}[4]{
\begin{figure}[ht]
\centering
\begin{tikzpicture}
\begin{axis}[width = 0.7\textwidth, xlabel={$x_1$}, ylabel={$x_2$}, axis equal,
xticklabel style={/pgf/number format/fixed}, yticklabel style={/pgf/number format/fixed}]
#3 
\end{axis}
\end{tikzpicture}

\medskip

\begin{tikzpicture}
\pgfplotstableread[col sep=comma]{geo-#1_sol-#2_p-0_q-0.csv}{\pzero}
\pgfplotstableread[col sep=comma]{geo-#1_sol-#2_p-1_q-1.csv}{\pone}
\pgfplotstableread[col sep=comma]{geo-#1_sol-#2_p-2_q-2.csv}{\ptwo}
\begin{loglogaxis}[width = 0.7\textwidth, xlabel={degrees of freedom}, ylabel={energy error}, xmajorgrids=true, ymajorgrids=true, 
legend style={legend pos=south west}]
\addplot[red, mark=*, very thick] table[x=dofs,y=errs]{\pzero};
\addplot[blue, mark=square, very thick] table[x=dofs,y=errs]{\pone};
\addplot[green, mark=x, very thick] table[x=dofs,y=errs]{\ptwo};

\pgfplotstablegetrowsof{\pzero}
\pgfplotstablegetelem{0}{dofs}\of\pzero
\let\firstdofzero\pgfplotsretval

\pgfplotstablegetrowsof{\pzero}
\pgfmathtruncatemacro{\lastrow}{\pgfplotsretval-1} 
\pgfplotstablegetelem{\lastrow}{dofs}\of\pzero
\let\lastdofzero\pgfplotsretval

\pgfplotstablegetrowsof{\pone}
\pgfplotstablegetelem{0}{dofs}\of\pone
\let\firstdofone\pgfplotsretval

\pgfplotstablegetrowsof{\pone}
\pgfmathtruncatemacro{\lastrow}{\pgfplotsretval-1}
\pgfplotstablegetelem{\lastrow}{dofs}\of\pone
\let\lastdofone\pgfplotsretval

\pgfplotstablegetrowsof{\ptwo}
\pgfplotstablegetelem{0}{dofs}\of\ptwo
\let\firstdoftwo\pgfplotsretval

\pgfplotstablegetrowsof{\ptwo}
\pgfmathtruncatemacro{\lastrow}{\pgfplotsretval-1}
\pgfplotstablegetelem{\lastrow}{dofs}\of\ptwo
\let\lastdoftwo\pgfplotsretval

#4 

\legend{
$p=0$,
$p=1$,
$p=2$,
}
\end{loglogaxis}
\end{tikzpicture}
\caption{Initial mesh (top) and convergence plot (bottom) for the #1.
}
\label{fig:#1}
\end{figure}
}

\plotMyFigure{circle}{quartic}{
\addplot[domain=0:360,samples=200,thick]({0.1*cos(x)}, {0.1*sin(x)});
\node[red, fill=red, circle, inner sep=2pt] at (axis cs:0.1,0) {};
\node[red, fill=red, circle, inner sep=2pt] at (axis cs:0,0.1) {};
\node[red, fill=red, circle, inner sep=2pt] at (axis cs:-0.1,0) {};
\node[red, fill=red, circle, inner sep=2pt] at (axis cs:0,-0.1) {};
}{
\addplot[black,dashed,domain=\firstdofzero:\lastdofzero] {0.3*x^(-3/2) };
\node at (axis cs:2*1e2,7*1e-4) [anchor=north west] {$\mathcal{O}({\rm dofs}^{-3/2})$};
\addplot[black,dashed,domain=\firstdofone:\lastdofone] {0.7*x^(-5/2) };
\node at (axis cs:6.3*1e2,1.5*1.0e-8) [anchor=north west] {$\mathcal{O}({\rm dofs}^{-5/2})$};
\addplot[black,dashed,domain=\firstdoftwo:\lastdoftwo] {10*x^(-7/2) };
\node at (axis cs:1e2,1.0e-8) [anchor=north west] {$\mathcal{O}({\rm dofs}^{-7/2})$};
}

\plotMyFigure{square}{sinhcos}{
\addplot[thick]coordinates {(-0.1,-0.1) (0.1,-0.1) (0.1,0.1) (-0.1,0.1) (-0.1,-0.1)};
\node[red, fill=red, circle, inner sep=2pt] at (axis cs:0.1,0.1) {};
\node[red, fill=red, circle, inner sep=2pt] at (axis cs:-0.1,0.1) {};
\node[red, fill=red, circle, inner sep=2pt] at (axis cs:0.1,-0.1) {};
\node[red, fill=red, circle, inner sep=2pt] at (axis cs:-0.1,-0.1) {};
}{
\addplot[black,dashed,domain=\firstdofzero:\lastdofzero] {10*x^(-3/2) };
\node at (axis cs:2*1e2,1.4*1e-2) [anchor=north west] {$\mathcal{O}({\rm dofs}^{-3/2})$};
\addplot[black,dashed,domain=\firstdofone:\lastdofone] {15*x^(-5/2) };
\node at (axis cs:6.5*1e2,3*1.0e-7) [anchor=north west] {$\mathcal{O}({\rm dofs}^{-5/2})$};
\addplot[black,dashed,domain=\firstdoftwo:\lastdoftwo] {300*x^(-7/2) };
\node at (axis cs:1e2,4*1.0e-7) [anchor=north west] {$\mathcal{O}({\rm dofs}^{-7/2})$};
}

\plotMyFigure{pacman}{singular}{
\addplot[domain=0:157.5,samples=200,thick]({0.1*cos(x)}, {0.1*sin(x)});
\addplot[domain=202.5:360,samples=200,thick]({0.1*cos(x)}, {0.1*sin(x)});
\addplot[thick] coordinates {(0,0) ({0.1*cos(360-157.5)},{0.1*sin(360-157.5)})};
\addplot[thick] coordinates {(0,0) ({0.1*cos(360-157.5)},{0.1*sin(157.5)})};
\node[red, fill=red, circle, inner sep=2pt] at (axis cs:0,0) {};
\node[red, fill=red, circle, inner sep=2pt] at (axis cs:0.1,0) {};
\node[red, fill=red, circle, inner sep=2pt] at (axis cs:-0.09239,0.03827) {};
\node[red, fill=red, circle, inner sep=2pt] at (axis cs:-0.09239,-0.03827) {};
\node at (axis cs:-0.05,0) {$\pi/4$};
}{
\addplot[black,dashed,domain=\firstdofzero:\lastdofzero] {0.4*x^(-3/2) };
\node at (axis cs:2*1e2,5*1e-4) [anchor=north west] {$\mathcal{O}({\rm dofs}^{-3/2})$};
\addplot[black,dashed,domain=\firstdofone:\lastdofone] {3*x^(-5/2) };
\node at (axis cs:6.25*1e2,6*1.0e-8) [anchor=north west] {$\mathcal{O}({\rm dofs}^{-5/2})$};
\addplot[black,dashed,domain=\firstdoftwo:\lastdoftwo] {70*x^(-7/2) };
\node at (axis cs:1e2,6*1.0e-8) [anchor=north west] {$\mathcal{O}({\rm dofs}^{-7/2})$};
}

\appendix

\section{Numerical computation of singular integrals} \label{sec_app}
In this section, we describe how to numerically approximate the singular integrals occurring in the discrete variational problem~\eqref{BIE_saddle:disc}.
More precisely, we consider $\dual{\V\tr{m}_h}{\tr{q}_h}$ as well as $\dual{\K\tr{u}_h}{\tr{q}_h}$ for $\tr{m}_h = (\trhnn{m},\trhsf{m},\trhJ{m}), \tr{q}_h = (\trhnn{q},\trhsf{q},\trhJ{q}) \in X_h$ and $\tr{u}_h = (\trhD{u},\trhN{u}) \in Y_h$. 
Throughout, we abbreviate $\cN:= \cN_h$ for $p=0$ and $\cN:=\cN(\Omega)$ for $p>0$.
We note that the employed Theorems~\ref{thm_BIO_V}, \ref{thm_BIO_K} hold accordingly for $\cN(\Omega)$ replaced by $\cN_h$ (or any other set that contains $\cN(\Omega)$).

\subsection{Single-layer operator} \label{sec_computeV}
It holds that
\begin{align*}
	\dual{\V\tr{m}_h}{\tr{q}_h}
	= \dual{\Vo\tr{m}_h}{\trhsf{q}}_\Gamma - \dual{\Vt\tr{m}_h}{\trhnn{q}}_\Gamma + \sum_{x\in\cN} \Vo\tr{m}_h(x) \trhJ{q}(x).
\end{align*}
By Theorem~\ref{thm_BIO_V}, the first term is given as 
\begin{subequations}
\begin{align}
	&\dual{\Vo\tr{m}_h}{\trhsf{q}}_\Gamma  \label{eq:Vsfsf}
	=  \frac 1{8\pi} \int_\Gamma \int_\Gamma |x-y|^2\log|x-y| \, \trhsf{m}(y) \trhsf{q}(x)\ds_y \ds_x
	\\ &\qquad \label{eq:Vnnsf}
	 	+ \frac 1{8\pi} \int_\Gamma \int_\Gamma (2\log|x-y|+1){n}_y\cdot(x-y) \, \trhnn{m}(y) \trhsf{q}(x) \ds_y \ds_x
	\\ &\qquad \label{eq:VJsf}
		+ \frac 1{8\pi} \int_\Gamma \sum_{y\in\cN} |x-y|^2\log|x-y| \, \trhJ{m}(y) \trhsf{q}(x) \ds_x,
\end{align}
\end{subequations}
the second one as 
\begin{subequations}
\begin{align}
	&\dual{\Vt\tr{m}_h}{\trhnn{q}}_\Gamma \label{eq:Vsfnn}
	= \frac 1{8\pi}\int_\Gamma \int_\Gamma (2\log|x-y|+1){n}_x\cdot(x-y) \, \trhsf{m}(y) \trhnn{q}(x) \ds_y \ds_x
	\\ & \label{eq:Vnnnn}
		+ \frac 1{8\pi} \int_\Gamma \int_\Gamma \Bigl((2\log|x-y|+1){n}_x\cdot{n}_y + 2 \frac{{n}_x\cdot(x-y)\,{n}_y\cdot(x-y)}{|x-y|^2} \Bigr) \trhnn{m}(y) \trhnn{q}(x) \ds_y \ds_x 
	\\ & \label{eq:VJnn}
		+ \frac 1{8\pi} \int_\Gamma \sum_{y\in\cN} (2\log|x-y|+1){n}_x\cdot(x-y) \trhJ{m}(y) \trhnn{q}(x) \ds_x,
\end{align}
\end{subequations}
and the third one as
\begin{subequations}
\begin{align}
	&\sum_{x\in\cN} \Vo\tr{m}_h(x) \trhJ{q}(x) \label{eq:VsfJ}
	=  \frac 1{8\pi} \sum_{x\in\cN} \int_\Gamma |x-y|^2\log|x-y| \, \trhsf{m}(y) \trhJ{q}(x)\ds_y 
	\\ &\qquad \label{eq:VnnJ}
	 	+ \frac 1{8\pi} \sum_{x\in\cN} \int_\Gamma (2\log|x-y|+1){n}_y\cdot(x-y) \, \trhnn{m}(y) \trhJ{q}(x) \ds_y
	\\ &\qquad \label{eq:VJJ}
		+ \frac 1{8\pi} \sum_{x\in\cN} \sum_{y\in\cN} |x-y|^2\log|x-y| \, \trhJ{m}(y) \trhJ{q}(x).
\end{align}
\end{subequations}
Recalling that $0^2\log |0|^2 := 0$, the term~\eqref{eq:VJJ} can be trivially evaluated. 
By symmetry, \eqref{eq:Vsfnn}, \eqref{eq:VsfJ}, and \eqref{eq:VnnJ} can be reduced to \eqref{eq:Vnnsf}, \eqref{eq:VJsf}, and \eqref{eq:VJnn}, respectively. 
It remains to consider \eqref{eq:Vsfsf}, \eqref{eq:Vnnsf}, \eqref{eq:Vnnnn}, \eqref{eq:VJsf}, and \eqref{eq:VJnn}. 
We split all outer and inner integrals over $\Gamma$ in integrals over $E\in\cE_h$ and $E'\in\cE_h$. 

The integrand in~\eqref{eq:Vsfsf} is continuous, but higher-order derivatives have logarithmic singularities. 
If $E\cap E'$ is empty, we may use standard Gauss--Legendre quadrature (in the parameter domain). 
Otherwise, we first apply a Duffy transformation (in the parameter domain) along the point or line singularity, see, e.g., \cite{gps22}, and then apply the quadrature from~\cite{smith00}, which is exact for integrands of the form $s \mapsto f_1(s) + f_2(s) \log(s)$ for polynomials $f_1, f_2$. 

The same procedure can be applied to~\eqref{eq:Vnnsf} and the first summand in \eqref{eq:Vnnnn}. 
Note that the second summand in \eqref{eq:Vnnnn} is continuous, as 
\begin{align}\label{eq:n_diff}
	{n}_x\cdot(x-y) = \mathcal{O}(|x-y|^2) = {n}_y\cdot(x-y) \quad (|x-y|\to 0).
\end{align}
We can thus again use standard Gauss--Legendre quadrature, in combination with a Duffy transformation if $E\cap E'\neq \emptyset$. 

For \eqref{eq:VJsf} and \eqref{eq:VJnn}, we employ Gauss--Legendre quadrature if $E$ or $E'$ contains the considered vertex $y\in\cN$ or $x\in\cN$, respectively, and the quadrature from~\cite{smith00} else.

\subsection{Double-layer operator}
It holds that
\begin{align*}
	\dual{\K\tr{u}_h}{\tr{q}_h} 
	= \dual{\Ko\tr{u}_h}{\trhsf{q}}_\Gamma - \dual{\Kt\tr{u}_h}{\trhnn{q}}_\Gamma + \sum_{x\in\cN} \Ko\tr{u}_h(x) \trhJ{q}(x).
\end{align*}
By Theorem~\ref{thm_BIO_K}, the first term is given as 
\begin{subequations}
\begin{align}
	&\dual{\Ko\tr{u}_h}{\trhsf{q}}_\Gamma \label{eq:K0sf}
	= - \frac 1{2\pi} \int_\Gamma \int_\Gamma \partial_{n_y}\log|x-y| \, \trhD{u}(y) \trhsf{q}(x)\ds_y \ds_x
	\\ &\qquad \label{eq:Ktsf}
		+ \frac {1-\nu}{4\pi} \int_\Gamma \int_\Gamma \frac{{t}_y\cdot(x-y)\, {n}_y\cdot(x-y)}{|x-y|^2} \, \partial_{t_y}\trhD{u}(y) \trhsf{q}(x) \ds_y \ds_x
	\\ &\qquad \label{eq:Knsf}
		+ \int_\Gamma \int_\Gamma \Bigl(\frac {1+3\nu}{8\pi} + \frac {1+\nu}{4\pi} \log|x-y| + \frac {1-\nu}{4\pi} \frac {\bigl({n}_y\cdot (x-y)\bigr)^2}{|x-y|^2} \Bigr) \, \trhN{u}(y) \trhsf{q}(x) \ds_y \ds_x,
	\\ & \qquad \nonumber
		+ \int_\Gamma \bigl(\sigma(x)-\frac 12\bigr) \, \trhD{u}(x) \trhsf{q}(x) \ds_x,
\end{align}
\end{subequations}
(where the last term vanishes, because $\sigma = 1/2$ almost everywhere), 
the second one as 
\begin{subequations}
\begin{align} \label{eq:K0nn}
	&\dual{\Kt\tr{u}_h}{\trhnn{q}}_\Gamma 
	= - \frac 1{2\pi} \int_\Gamma \int_\Gamma \partial_{n_x}\partial_{n_y}\log|x-y| \, \bigl(\trhD{u}(y)-\trhD{u}(x)\bigr) \, \trhnn{q}(x) \ds_y \ds_x
	\\ &\qquad \label{eq:Ktnn}
		+ \frac {1-\nu}{4\pi} \int_\Gamma \int_\Gamma \partial_{n_x} \frac{{t}_y\cdot(x-y) {n}_y\cdot(x-y)}{|x-y|^2} \, \partial_{t_y} \trhD{u}(y) \trhnn{q}(x) \ds_y \ds_x 
	\\ &\qquad \label{eq:Knnn}
		+ \int_\Gamma \int_\Gamma \Bigl(\frac{1+\nu}{4\pi} \partial_{n_x}\log|x-y| + \frac {1-\nu}{4\pi} \partial_{n_x} \frac{({n}_y\cdot (x-y))^2}{|x-y|^2} \Bigr) \, \trhN{u}(y) \trhnn{q}(x) \ds_y \ds_x.
\end{align}
\end{subequations}
(where the last term can be trivially evaluated), and the third one as
\begin{subequations}
\begin{align}
	&\sum_{x\in\cN} \Ko\tr{u}_h(x) \trhJ{q}(x) \label{eq:K0J}
	= - \frac 1{2\pi} \sum_{x\in\cN} \int_\Gamma \partial_{n_y}\log|x-y| \, \trhD{u}(y) \trhJ{q}(x)\ds_y 
	\\ &\qquad \label{eq:KtJ}
		+ \frac {1-\nu}{4\pi} \sum_{x\in\cN} \int_\Gamma \frac{{t}_y\cdot(x-y)\, {n}_y\cdot(x-y)}{|x-y|^2} \partial_{t_y}\, \trhD{u}(y) \trhJ{q}(x) \ds_y 
	\\ &\qquad \label{eq:KnJ}
		+ \sum_{x\in\cN} \int_\Gamma \Bigl(\frac {1+3\nu}{8\pi} + \frac {1+\nu}{4\pi} \log|x-y| + \frac {1-\nu}{4\pi} \frac {\bigl({n}_y\cdot (x-y)\bigr)^2}{|x-y|^2} \Bigr) \, \trhN{u}(y) \trhJ{q}(x) \ds_y 
	\\ &\qquad \nonumber
		+ \sum_{x\in\cN} \bigl(\sigma(x)-\frac 12\bigr) \, \trhD{u}(x) \trhJ{q}(x). 
\end{align}
\end{subequations}
As before in Section~\ref{sec_computeV}, we split all outer and inner integrals over $\Gamma$ in integrals over $E\in\cE_h$ and $E'\in\cE_h$. 

The kernel of~\eqref{eq:K0sf} is exactly the kernel of the double-layer operator associated to the Laplacian. 
Owing to~\eqref{eq:n_diff}, we can again use standard Gauss--Legendre quadrature in combination with a Duffy transformation if $E\cap E'\neq \emptyset$, cf.~\cite{gps22}. 
The same procedure can be applied to~\eqref{eq:Ktsf}, the third term of~\eqref{eq:Knsf}, the first term of~\eqref{eq:Knnn}, and, with the explicit representation  
\begin{align*}
	\frac {1-\nu}{4\pi}  \partial_{n_x} \frac{({n}_y\cdot (x-y))^2}{|x-y|^2}
	= \frac {1-\nu}{2\pi} \Big(\frac{{n}_y\cdot (x-y) {n}_x \cdot {n}_y}{|x-y|^2} - \frac{({n}_y\cdot (x-y))^2 {n}_x \cdot (x-y)}{|x-y|^4}\Big),
\end{align*}
also for the second term of~\eqref{eq:Knnn}.

The kernel of the first term of~\eqref{eq:Knsf} is constant, so that Gauss--Legendre quadrature can be used.
The kernel of the second term of~\eqref{eq:Knsf} has a logarithmic singularity, and can thus be computed as in Section~\ref{sec_computeV}. 

The integral kernel of~\eqref{eq:K0nn} is minus the kernel of the hypersingular operator associated to the Laplacian. 
The integration-by-parts formula from \cite[Section~6.5]{Steinbach_08_NAM} thus yields the representation
\begin{subequations}
\begin{align}
	&- \frac 1{2\pi} \int_\Gamma \int_\Gamma \partial_{n_x}\partial_{n_y}\log|x-y| \, \bigl(\trhD{u}(y)-\trhD{u}(x)\bigr) \, \trhnn{q}(x) \ds_y \ds_x \nonumber
	\\ &\qquad \label{eq:K0nn_a}
	= \frac 1{2\pi} \sum_{E\in\cE_h} \sum_{E'\in\cE_h} \int_E \int_{E'} \log|x-y|  \, \partial_{t_y} \trhD{u}(y) \, \partial_{t_x} \trhnn{q}(x) \ds_y \ds_x 
	\\ &\qquad\quad \label{eq:K0nn_b}
		+ \frac 1{2\pi} \sum_{x\in\cN_h} \sum_{E'\in\cE_h} \int_{E'} \log|x-y|  \, \partial_{t_y} \trhD{u}(y) \, [\trhnn{q}(x)] \ds_y.
\end{align}
\end{subequations}
The integral kernels of~\eqref{eq:K0nn_a}--\eqref{eq:K0nn_b} are both minus the integral kernel of the single-layer operator associated to the Laplacian. 
The terms in \eqref{eq:K0nn_a} can thus be computed as before in Section~\ref{sec_computeV} by using standard Gauss--Legendre quadrature if $E\cap E' = \emptyset$ and a Duffy transformation in combination with the quadrature from~\cite{smith00} if $E\cap E' \neq \emptyset$. 
Similarly, the terms in \eqref{eq:K0nn_b} can be computed by using Gauss--Legendre quadrature if $x\not\in E'$ and the quadrature from~\cite{smith00} else. 

For~\eqref{eq:Ktnn}, we recall from~\eqref{calc} that the kernel is given as 
\begin{align*}
	\frac {1-\nu}{4\pi}  \partial_{n_x} \frac{{t}_y\cdot(x-y) {n}_y\cdot(x-y)}{|x-y|^2}
	= \partial_{n_x} \big({t}_y\cdot\cC\Gradgrad G(x-y) {n}_y\big)
	= (1-\nu) \big(\partial_{t_y} \partial_{n_x} \grad_y G(x-y)\big) \cdot{n}_y.
\end{align*}
Hence, the integration-by-parts formula from \cite[Section~6.5]{Steinbach_08_NAM} yields
\begin{subequations}
\begin{align} 
	&\frac {1-\nu}{4\pi} \int_\Gamma \int_\Gamma \partial_{n_x} \frac{{t}_y\cdot(x-y) {n}_y\cdot(x-y)}{|x-y|^2} \, \partial_{t_y} \trhD{u}(y) \trhnn{q}(x) \ds_y \ds_x \nonumber
	\\ &\quad \label{eq:Ktnn_a}
	= -(1-\nu) \sum_{E\in\cE_h} \sum_{E'\in\cE_h} \int_E \int_{E'} \partial_{n_x} \grad_y G(x-y) \cdot \partial_{t_y}\big(\partial_{t_y} \trhD{u}(y) {n}_y\big) \trhnn{q}(x) \ds_y \ds_x 
	\\ &\qquad \label{eq:Ktnn_b}
		- (1-\nu) \sum_{E\in\cE_h} \sum_{y\in\cN_h} \int_E \partial_{n_x} \grad_y G(x-y) \cdot \big[\partial_{t_y} \trhD{u}(y) {n}_y \big] \trhnn{q}(x) \ds_y \ds_x, 
\end{align}
where, using again~\eqref{calc}, 
\begin{align} \label{eq:Ktnn_c}
	\partial_{n_x}\grad_y G(x-y) 
	= -\Gradgrad G(x-y) {n}_x
	= -\frac 1{8\pi}(2\log|(x-y)|+1){n}_x - \frac 1{4\pi} \frac{{n}_x\cdot (x-y)(x-y)}{|x-y|^2}.
\end{align}
\end{subequations}
The first term in~\eqref{eq:Ktnn_a} (from~\eqref{eq:Ktnn_c}) has a logarithmic singularity and can thus be computed as before in Section~\ref{sec_computeV} by using standard Gauss--Legendre quadrature if $E\cap E' = \emptyset$ and a Duffy transformation in combination with the quadrature from~\cite{smith00} if $E\cap E' \neq \emptyset$. 
Owing to \eqref{eq:n_diff}, the second term in \eqref{eq:Ktnn_a} (from~\eqref{eq:Ktnn_c}) can be computed by using Gauss--Legendre quadrature if $E\cap E' = \emptyset$ in combination with a Duffy transformation if $E\cap E' \neq \emptyset$. 

For~\eqref{eq:K0J}, \eqref{eq:KtJ}, the third term in~\eqref{eq:KnJ},
and the second term in~\eqref{eq:Ktnn_b} (from~\eqref{eq:Ktnn_c}), we employ Gauss--Legendre quadrature, which is justified by~\eqref{eq:n_diff}. 
We further employ Gauss--Legendre quadrature for the first term in \eqref{eq:KnJ} with smooth (constant) kernel as well as for the second term in~\eqref{eq:KnJ} and the first term in \eqref{eq:Ktnn_b} (from~\eqref{eq:Ktnn_c}) with logarithmic kernel if $x \not \in E'$, and the quadrature from~\cite{smith00} if $x \in E'$.

\bibliographystyle{siam}
\bibliography{references}

\begin{thebibliography}{10}

\bibitem{Beskos_91_BEA}
{\sc D.~E. Beskos}, ed., {\em Boundary Element Analysis of Plates and Shells},
  Springer Series in Computational Mechanics, Springer Berlin, Heidelberg,
  1991.

\bibitem{Bezine_78_BIF}
{\sc G.~B\'ezine}, {\em Boundary integral formulation for plate flexure with
  arbitrary boundary conditions}, Mech. Res. Comm., 5 (1978), pp.~197--206.

\bibitem{BlumR_80_BVP}
{\sc H.~Blum and R.~Rannacher}, {\em On the boundary value problem of the
  biharmonic operator on domains with angular corners}, Math. Methods Appl.
  Sci., 2 (1980), pp.~556--581.

\bibitem{Bourlard_88_PDB}
{\sc M.~Bourlard}, {\em Probl\`eme de {D}irichlet pour le bilaplacien dans un
  polygone: r\'{e}solution par \'{e}l\'{e}ments finis fronti\`eres
  raffin\'{e}s}, C. R. Acad. Sci. Paris S\'{e}r. I Math., 306 (1988),
  pp.~461--466.

\bibitem{CakoniHW_05_BIE}
{\sc F.~Cakoni, G.~C. Hsiao, and W.~L. Wendland}, {\em On the boundary integral
  equation method for a mixed boundary value problem of the biharmonic
  equation}, Complex Var. Theory Appl., 50 (2005), pp.~681--696.

\bibitem{ChristiansenH_78_IPI}
{\sc S.~Christiansen and P.~Hougaard}, {\em An investigation of a pair of
  integral equations for the biharmonic problem}, J. Inst. Math. Appl., 22
  (1978), pp.~15--27.

\bibitem{Ciarlet_78}
{\sc P.~G. Ciarlet}, {\em Interpolation error estimates for the reduced
  {H}sieh-{C}lough-{T}ocher triangle}, Math. Comp., 32 (1978), pp.~335--344.

\bibitem{Costabel_88_BIO}
{\sc M.~Costabel}, {\em Boundary integral operators on {Lipschitz} domains:
  Elementary results}, SIAM J. Math. Anal., 19 (1988), pp.~613--626.

\bibitem{CostabelD_96_IBS}
{\sc M.~Costabel and M.~Dauge}, {\em Invertibility of the biharmonic single
  layer potential operator}, Integral Equations Operator Theory, 24 (1996),
  pp.~46--67.

\bibitem{CostabelS_89_BEA}
{\sc M.~Costabel and J.~Saranen}, {\em Boundary element analysis of a direct
  method for the biharmonic {D}irichlet problem}, in The {G}ohberg anniversary
  collection, {V}ol.\ {II} ({C}algary, {AB}, 1988), vol.~41 of Oper. Theory
  Adv. Appl., Birkh\"auser, Basel, 1989, pp.~77--95.

\bibitem{CostabelS_83_NDD}
{\sc M.~Costabel and E.~P. Stephan}, {\em The normal derivative of the double
  layer potential on polygons and {Galerkin} approximation}, Appl. Anal., 16
  (1983), pp.~205--228.

\bibitem{CostabelS_85_BIE}
\leavevmode\vrule height 2pt depth -1.6pt width 23pt, {\em Boundary integral
  equations for mixed boundary value problems in polygonal domains and
  {Galerkin} approximations}, in Mathematical Models and Methods in Mechanics,
  W.~Fiszdon and K.~Wilmanski, eds., Warsaw, 1985, Banach Centre Publ. {\bf
  15}, pp.~175--251.

\bibitem{CostabelSW_83_BIE}
{\sc M.~Costabel, E.~P. Stephan, and W.~L. Wendland}, {\em On boundary integral
  equations of the first kind for the bi-{Laplacian} in a polygonal domain},
  Annali Scuola Normale Superiore Pisa, Ser. IV, 10 (1983), pp.~197--241.

\bibitem{DeCosterNS_15_SBD}
{\sc C.~De~Coster, S.~Nicaise, and G.~Sweers}, {\em Solving the biharmonic
  {D}irichlet problem on domains with corners}, Math. Nachr., 288 (2015),
  pp.~854--871.

\bibitem{ErnGuermondI}
{\sc A.~Ern and J.-L. Guermond}, {\em Finite elements {I}---{A}pproximation and
  interpolation}, vol.~72 of Texts in Applied Mathematics, Springer, Cham,
  2021.

\bibitem{fuglede81}
{\sc B.~Fuglede}, {\em On a direct method of integral equations for solving the
  biharmonic {D}irichlet problem}, Z. Angew. Math. Mech., 61 (1981),
  pp.~449--459.

\bibitem{FuehrerHH_23_DMQ}
{\sc T.~F{\"u}hrer, P.~Herrera, and N.~Heuer}, {\em A {DPG} method for the
  quad-curl problem}, Comput. Methods Appl. Math., 149 (2023), pp.~221--238.

\bibitem{FuehrerHN_19_UFK}
{\sc T.~F{\"u}hrer, N.~Heuer, and A.~H. Niemi}, {\em An ultraweak formulation
  of the {Kirchhoff}--{Love} plate bending model and {DPG} approximation},
  Math. Comp., 88 (2019), pp.~1587--1619.

\bibitem{gps22}
{\sc G.~Gantner, D.~Praetorius, and S.~Schimanko}, {\em Stable implementation
  of adaptive {IGABEM} in 2{D} in {MATLAB}}, Comput. Methods Appl. Math., 22
  (2022), pp.~563--590.

\bibitem{GiroireN_78_NSE}
{\sc J.~Giroire and J.-C. N\'ed\'elec}, {\em Numerical solution of an exterior
  {N}eumann problem using a double layer potential}, Math. Comp., 32 (1978),
  pp.~973--990.

\bibitem{GiroireN_95_NSB}
{\sc J.~Giroire and J.-C. N\'{e}d\'{e}lec}, {\em A new system of boundary
  integral equations for plates with free edges}, Math. Methods Appl. Sci., 18
  (1995), pp.~755--772.

\bibitem{GuoShuM_86_BEM}
{\sc S.~Guo-Shu and S.~Mukherjee}, {\em Boundary element analysis of bending of
  elastic plates of arbitrary shape with general boundary conditions}, Eng.
  Anal., 3 (1986), pp.~36--44.

\bibitem{GwinnerS_18_ABE}
{\sc J.~Gwinner and E.~P. Stephan}, {\em Advanced boundary element methods},
  vol.~52 of Springer Series in Computational Mathematics, Springer, Cham,
  2018.
\newblock Treatment of boundary value, transmission and contact problems.

\bibitem{HartmannZ_86_DBE}
{\sc F.~Hartmann and R.~Zotemantel}, {\em The direct boundary element method in
  plate bending}, Int. J. Numer. Methods Eng., 23 (1986), pp.~2049--2069.

\bibitem{Heuer_GMP}
{\sc N.~Heuer}, {\em Generalized mixed and primal hybrid methods with
  applications to plate bending}, Numer. Math.,  (2025).
\newblock Online, DOI 10.1007/s00211-025-01515-1.

\bibitem{HsiaoMC_73_SBV}
{\sc G.~C. Hsiao and R.~C. MacCamy}, {\em Solution of boundary value problems
  by integral equations of the first kind}, SIAM Rev., 15 (1973), pp.~687--705.

\bibitem{HsiaoW_77_FEM}
{\sc G.~C. Hsiao and W.~L. Wendland}, {\em A finite element method for some
  integral equations of the first kind}, J. Math. Anal. Appl., 58 (1977),
  pp.~449--481.

\bibitem{HsiaoW_21_BIE}
{\sc G.~C. Hsiao and W.~L. Wendland}, {\em Boundary integral equations},
  vol.~164 of Applied Mathematical Sciences, Springer, Cham, second~ed., 2021.

\bibitem{Jakovlev_61_BPF}
{\sc G.~N. Jakovlev}, {\em Boundary properties of functions of the class
  {$W\sb{p}{}\sp{(l)}$} in regions with corners}, Dokl. Akad. Nauk SSSR, 140
  (1961), pp.~73--76.

\bibitem{McLean_00_SES}
{\sc W.~McLean}, {\em Strongly Elliptic Systems and Boundary Integral
  Equations}, Cambridge University Press, 2000.

\bibitem{MitreaM_13_BVP}
{\sc I.~Mitrea and M.~Mitrea}, {\em Boundary value problems and integral
  operators for the bi-{L}aplacian in non-smooth domains}, Atti Accad. Naz.
  Lincei Rend. Lincei Mat. Appl., 24 (2013), pp.~329--383.

\bibitem{NedelecP_73_MVE}
{\sc J.-C. N\'ed\'elec and J.~Planchard}, {\em Une m\'ethode variationnelle
  d'\'el\'ements finis pour la r\'esolution num\'erique d'un probl\`eme
  ext\'erieur dans {$R\sp{3}$}}, Rev. Fran\c caise Automat. Informat. Recherche
  Op\'erationnelle S\'er. Rouge, 7 (1973), pp.~105--129.

\bibitem{SauterS_11_BEM}
{\sc S.~A. Sauter and C.~Schwab}, {\em Boundary element methods}, vol.~39 of
  Springer Series in Computational Mathematics, Springer-Verlag, Berlin, 2011.
\newblock Translated and expanded from the 2004 German original.

\bibitem{Schmidt_01_BIO}
{\sc G.~Schmidt}, {\em Boundary integral operators for plate bending in domains
  with corners}, Z. Anal. Anwendungen, 20 (2001), pp.~131--154.

\bibitem{SchmidtK_99_BIE}
{\sc G.~Schmidt and B.~N. Khoromskij}, {\em Boundary integral equations for the
  biharmonic {D}irichlet problem on nonsmooth domains}, J. Integral Equations
  Appl., 11 (1999), pp.~217--253.

\bibitem{smith00}
{\sc R.~Smith}, {\em {Direct Gauss quadrature formulae for logarithmic
  singularities on isoparametric elements}}, {Eng. Anal. Bound. Elem.}, 24
  (2000), pp.~161--167.

\bibitem{Steinbach_08_NAM}
{\sc O.~Steinbach}, {\em Numerical approximation methods for elliptic boundary
  value problems}, Springer, New York, 2008.
\newblock Finite and boundary elements, Translated from the 2003 German
  original.

\bibitem{Stephan_79_CMF}
{\sc E.~Stephan}, {\em Conform and mixed finite element schemes for the
  {Dirichlet} problem for the bi-{Laplacian} in plane domains with corners},
  Math. Methods Appl. Sci., 1 (1979), pp.~354--382.

\bibitem{VentselK_01_TPS}
{\sc E.~Ventsel and T.~Krauthammer}, {\em Thin Plates and Shells}, CRC Press,
  New York, 2001.

\bibitem{WendlandSH_79_IEM}
{\sc W.~L. Wendland, E.~Stephan, and G.~C. Hsiao}, {\em On the integral
  equation method for the plane mixed boundary value problem of the
  {Laplacian}}, Math. Methods Appl. Sci., 1 (1979), pp.~265--321.

\end{thebibliography}
\end{document}